\newcommand{\bfx}{{\bf x}}
\newcommand{\bfy}{{\bf y}}
\newcommand{\bfz}{{\bf z}}
\newcommand{\bfu}{{\bf u}}
\newcommand\Sing{{\text{\rm Sing}}}
\newcommand\ind{{\text{\rm ind}}}
\newcommand\wBl{{{\rm wBl}}}
\newcommand\cE{{\mathcal{E}}}
\newcommand\cU{{\mathcal{U}}}
\newcommand\cX{{\mathcal{X}}}
\newcommand\bC{{\mathbb C}}
\newcommand\bP{{\mathbb P}}
\newcommand\bZ{{\mathbb Z}}
\newtheorem{thm}{Theorem}[section]
\newtheorem{lem}[thm]{Lemma}
\newtheorem{cor}[thm]{Corollary}
\newtheorem{prop}[thm]{Proposition}
\theoremstyle{definition}
\newtheorem{defn}[thm]{Definition}
\newtheorem{setup}[thm]{}
\newtheorem{rem}[thm]{Remark}
\theoremstyle{remark}
\title{Explicit resolution of three dimensional terminal singularities}
\author{Jungkai Alfred Chen}
\dedicatory{Dedicated to Prof. Shigefumi Mori on  his 60th
birthday}
\address{\rm Department of Mathematics, National Taiwan University, Taipei,
106, Taiwan} \email{jkchen@math.ntu.edu.tw}
\address{\rm Research Institute for Mathematical Sciences, Kyoto University, Kyoto 606-8502, Japan}
\begin{document}

\maketitle \pagestyle{myheadings} \markboth{\hfill Jungkai A. Chen
\hfill}{\hfill Explicit resolution of $3$-fold terminal
singularities \hfill}

\begin{abstract} We prove that any three dimensional terminal
singularity $P \in X$ can be resolved by a sequence of divisorial
contractions with minimal discrepancies which are weighted blowups
over points.
\end{abstract}

\section{Introduction}

Terminal singularities are the smallest category that minimal
model program could work in higher dimension. In fact, the
development of minimal model program in dimension three was built
on the understanding of three dimensional terminal singularities:  Reid set up some fundamental results
on canonical and terminal singularities (cf. \cite{C3f, Reid83, YPG}), Mori
classified three dimensional terminal singularities explicitly
(cf. \cite{Mo85}) and then Koll\'ar and Mori proved the existence
of flips by classifying "extremal neighborhood" (cf.
\cite{Mo88,KM92}), which is essentially the classification of
singularities on a rational curve representing extremal ray.
Together with the termination of flips of Shokurov (cf. \cite{Sho85}), one has the minimal model
program in dimension three.

It is interesting, and perhaps of fundamental importance, to know
those birational maps explicitly in minimal model program. For
example, if $X$ is a non-singular threefold and $X \to W$ is a
divisorial contraction to a point then $W$ could have simple
singularities like $(x^2+y^2+z^2+u^2=0)$, $(x^2+y^2+z^2+u^3=0)$, or
a quotient singularity $\frac{1}{2}(1,1,1)$ (cf. \cite{Mo82}). It is
expected that the singularities get worse by further contractions.

On the other way around, given a germ of three-dimensional
terminal singularity $P \in X$, it is expected that one can have a
resolution by successive divisorial contractions. For example,
given a terminal quotient singularity $P \in X$, one has the
"economical resolution" by Kawamata blowups successively. In
\cite{HaII}, Hayakawa shows the following

\begin{thm}\label{Haya} For a
terminal singularity $P \in X$ of index $r>1$, there exists a
partial resolution
$$ X_n \to \ldots \to X_1 \to X_0= X \ni P$$ such that $X_n$ is Gorenstein and
each $f_i: X_{i+1} \to X_{i}$ is a divisorial contraction to a point
$P_i \in X_i$ of index $r_i >1$ with minimal discrepancy $1/r_i$.
All these maps $f_i$ are weighted blowups.
\end{thm}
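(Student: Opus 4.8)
The plan is to argue by induction on a well-founded complexity invariant refining the index $r$, realizing each $f_i$ as an explicit weighted blowup read off from Mori's classification. The base case $r=1$ is vacuous, an index-one terminal germ being already Gorenstein, so assume $r>1$. By Mori's classification (cf. \cite{Mo85}) the germ $P\in X$ is analytically a cyclic quotient $\bigl((\phi=0)\subset\bC^4\bigr)/\mu_r$, with $(\phi=0)$ a compound Du Val hypersurface and $\mu_r$ acting diagonally with weights $\tfrac1r(a_1,a_2,a_3,a_4)$ in suitable semi-invariant coordinates. For each type on the list I would write down the weighted blowup $f\colon Y\to X$ with weight $\tfrac1r(a_1,a_2,a_3,a_4)$, adapted to the embedded hypersurface, and compute the discrepancy of its single exceptional prime divisor $E$. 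Since $rK_X$ is Cartier, every discrepancy over $P$ lies in $\tfrac1r\bZ_{>0}$, so $\tfrac1r$ is the smallest value possible; the weights are chosen precisely so that $a(E,X)=\tfrac1r$. One then checks that $f$ contracts $E$ to $P$, that $-E$ is $f$-ample, and that $Y$ is normal, so that $f$ is a divisorial contraction to the point $P$ of index $r$ with minimal discrepancy.

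The heart of the argument is the control of $\Sing Y$. Working in the affine toric charts of $f$ and tracking the proper transform of $\phi$, I would show that $Y$ has only finitely many singular points, all lying on $E$, and that each is again a three-dimensional terminal singularity of some index $r_i$. Terminality at these points can be checked either by matching them against Mori's list, or more uniformly by inversion of adjunction applied to a general anticanonical (``general elephant'') member, whose strict transform should acquire only Du Val singularities along $E$. The decisive point is that the complexity invariant strictly decreases at every new singular point: in the purely cyclic-quotient charts this is the classical index drop of the economical (Kawamata) blowup, governed by the continued-fraction expansion of $r$, while in the remaining charts it reflects the simplification of the weighted tangent cone of $\phi$.

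Finally, since the invariant takes values in a well-ordered set and drops strictly under each such contraction, after finitely many blowups at the successive non-Gorenstein points every remaining singularity has index $1$; the resulting $X_n$ is Gorenstein (hence terminal cDV), and $X_n\to\cdots\to X_0=X$ is the asserted partial resolution. I expect the main obstacle to be this middle step: computing $\Sing Y$ chart by chart for every type of Mori's list and verifying, in a uniform manner, both the terminality of the new points and the strict decrease of the invariant. In particular one must confirm that the minimal-discrepancy weight yields a genuinely terminal (not merely canonical) $Y$, and must design the invariant so that the exceptional index drop in the quotient part and the degeneration of the hypersurface equation in the non-quotient part are captured simultaneously; this case analysis, rather than any single conceptual ingredient, is where the real work lies.
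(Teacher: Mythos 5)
This statement is Theorem~1.1 of the paper, which is not proved there at all: it is quoted verbatim from Hayakawa's papers \cite{HaI, HaII} (``In \cite{HaII}, Hayakawa shows the following\dots''), so there is no in-paper proof to compare your attempt against. Judged on its own, your outline does follow the broad shape of Hayakawa's actual argument --- run through Mori's list, exhibit an explicit weighted blowup with discrepancy $\frac{1}{r}$ over each non-Gorenstein point, verify terminality of $Y$ chart by chart, and terminate by a decreasing invariant --- but as written it has two concrete problems beyond being a sketch.

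First, the weight of the blowup is generally \emph{not} the weight $\frac{1}{r}(a_1,a_2,a_3,a_4)$ of the $\mu_r$-action: that vector can have zero entries (e.g.\ $cD/2$ sits in $\bC^4/\frac{1}{2}(1,1,0,1)$) and even when positive it usually does not give discrepancy $\frac{1}{r}$. The correct weights depend on the defining equation as well as the group action --- compare $\frac{1}{r}(s,\kappa r-s,1,r)$ for $cA/r$ in Corollary~3.3 of this paper, or $\frac{1}{2}(\sigma',\sigma'-2,4,1)$ for $cD/2$ in Lemma~5.2 --- so ``read off from Mori's classification'' undersells a genuine computation. Second, and more seriously, the termination invariant you defer to the end is the crux, not a detail: the index does \emph{not} decrease in general under these contractions (over a $cD/3$ point one of Hayakawa's blowups produces a quotient singularity of index $5$, and for a quotient point $\frac{1}{r}(s,r-s,1)$ the Kawamata blowup yields points of indices $s$ and $r-s$ whose sum equals $r$), so neither the maximal index nor the sum of indices works uniformly. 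The invariant that actually does the job is the number of exceptional divisors over the germ with discrepancy $<1$ (finite by terminality), which drops by exactly one under any divisorial contraction with minimal discrepancy $\frac{1}{r}<1$; without identifying some such quantity your induction has no well-founded measure, and this is precisely where a blind attempt at the theorem would stall.
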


It is natural to ask whether one can resolve  terminal
singularities  of index $1$ in a similar manner, after Markushevich's result that there exists a divisorial contraction with discrepancy $1$ over any $cDV$ point which is a weighted blowup (cf. \cite{Ma}).
\begin{defn}
Given a three-dimensional terminal singularity $P \in X$. We say
that there exists a feasible resolution for $P \in X$ if there is a
sequence $$ X_n \to X_{n-1} \to \ldots \to X_1 \to X_0=X \ni P,$$
such that $X_n$ is non-singular and each $X_{i+1} \to X_{i}$ is a
divisorial contraction to a point with minimal discrepancy, i.e. a
contraction to a point $P_{i} \in X_i $ of index $r_i \ge 1$ with
discrepancy $1/r_i$.
\end{defn}

The purpose of this note is prove that a feasible resolution
exists for three dimensional terminal singularities.

\begin{thm}[Main Theorem]
Given a three-dimensional terminal singularity $P \in X$. There
exists a feasible resolution for $P \in X$.
\end{thm}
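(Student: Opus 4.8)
The plan is to argue by induction on a well-founded complexity invariant of the germ, reducing $P\in X$ at each stage to strictly simpler terminal singularities by a single minimal-discrepancy divisorial contraction to a point, treating the two index regimes by different inputs. Write $r=r(X,P)$ for the index.

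If $r>1$, I would invoke Hayakawa's Theorem~\ref{Haya} directly: it supplies a finite chain of divisorial contractions to points of index $>1$, each with minimal discrepancy $1/r_i$, after which every singular point is Gorenstein terminal, that is, an isolated $cDV$ point. Because that chain is already finite, the Main Theorem reduces to producing a feasible resolution for each isolated $cDV$ point; the only thing to verify at the interface is that terminality is preserved so the induction can continue past the Gorenstein stage.

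The heart of the argument is therefore the Gorenstein case. For an isolated $cDV$ point the minimal discrepancy equals $1$, and Markushevich's theorem \cite{Ma} furnishes a divisorial contraction $f\colon X'\to X$ to $P$ of discrepancy $1$, realized as a weighted blow-up. I would analyze $X'$ through the explicit charts of this weighted blow-up. Two things can happen: $X'$ may again carry Gorenstein terminal points, or the weights may create terminal points of index $>1$; in the second case I loop back and apply Hayakawa again to return to the Gorenstein situation. To organize this alternation I would use a lexicographic invariant: as dominant entry a difficulty $d(X)=\#\{E\ \text{over}\ X : a(E,X)\le 1\}$, finite because $X$ is terminal, and as a secondary entry a local measure of each $cDV$ point such as the Milnor number (equivalently the Du Val type) of a general hyperplane section. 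The contraction removes the extracted divisor $E_0$, which satisfies $a(E_0,X)\le 1$, from the exceptional locus, and the claim to be proved is that this invariant strictly drops.

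The main obstacle is exactly this monotonicity in the Gorenstein case. Since the extracted discrepancy is $1$ rather than $<1$, the difficulty $d$ is not obviously decreasing: a discrepancy-$1$ weighted blow-up over a $cDV$ point can a priori produce singularities whose own low-discrepancy divisors repopulate the count, and it is not clear the general-section Du Val type improves. The real work is thus an explicit, case-by-case study along the $cDV$ classification of the weighted blow-ups provided by \cite{Ma}: reading off from the weights and the Newton polygon of the defining equation the type and index of every point of $X'$, checking that no new divisor of discrepancy $\le 1$ is created beyond those accounted for by the secondary invariant, and confirming that $X'$ remains terminal. Establishing this strict decrease uniformly across the $cDV$ types, and compatibly with the re-entry into Hayakawa's theorem for the higher-index points that appear, is where I expect the difficulty to concentrate.
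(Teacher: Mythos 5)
Your skeleton---Hayakawa's theorem for the index $>1$ regime, Markushevich's discrepancy-$1$ contraction for $cDV$ points, and induction on a complexity invariant---is the natural first attempt, and it is exactly the approach the paper's introduction singles out as insufficient. The gap is that everything you defer to ``explicit case-by-case study'' \emph{is} the theorem. Concretely: (i) your dominant invariant $d(X)=\#\{E : a(E,X)\le 1\}$ is infinite as written (the blowup of any smooth curve through a smooth point of a threefold has discrepancy $1$), and even restricted to divisors centered at $P$ it is not monotone under a discrepancy-$1$ extraction: since $a(F,X')=a(F,X)-m_F\,a(E_0,X)$, divisors with $a(F,X)>1$ can drop into the count on $X'$. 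You acknowledge this non-monotonicity but propose no replacement, so the induction has no well-founded measure. (ii) Using Hayakawa's theorem as a black box fails at the interface: his partial resolution guarantees the output is Gorenstein but says nothing about \emph{which} $cDV$ points appear, and the index-$1$ singularities created while resolving the higher-index points of $X'$ are precisely what must be controlled. The paper has to choose specific contractions over $cAx/4$, $cD/2$, $cD/3$ and $cE/2$ points and verify by hand that their index-$1$ singularities are only of type $cA$ or $cD$ (or, for $cE/2$, at worst $cE_6$).

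(iii) Relatedly, one cannot just take ``the'' Markushevich blowup and read off charts: the paper selects, for each normal form, particular weights (often several, depending on weighted multiplicities of the defining equation) so that every new singular point lands strictly lower in a hierarchy of \emph{types}---terminal quotient, $cA$, $cA/r$, $cD$ together with $cAx/2$, then $cAx/4$/$cD/2$/$cD/3$, then $cE_6$, $cE/2$, $cE_7$, $cE_8$---measured by type-specific invariants ($\tau$, $\tau^\sharp$, $\kappa$, $\mu^\flat$, $\tau^*$, and the weights $wt_{v_l}$) attached to a fixed embedding, not by counting low-discrepancy divisors or by the Du Val type of a hyperplane section. The interleaving of index-$1$ and higher-index types in that hierarchy (e.g.\ $cAx/2$ handled simultaneously with $cD$, and $cE/2$ sitting between $cE_6$ and $cE_7$) is the structural content your two-regime framing misses; without it, the loop ``blow up, return to Hayakawa, repeat'' has no termination proof.
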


One might expect to construct such resolution by finding a
divisorial contraction with discrepancy $1$ over a terminal
singularity of index $1$ and combining with Theorem \ref{Haya}. However,
there are some technical difficulties.

First of all, given a divisorial contraction $Y \to X \ni P$ with
discrepancy $1$ over a terminal singularity $P\in X$ of index $1$
, then $Y$ usually have singularities of higher indexes. Resolving
these higher index points by Hayakawa's result, one might pickup
some extra  singularities of index $1$ in the process. However,
the studies of singularities of index $1$ was not there in
Hayakaya's work.

Another difficulty is that singular Riemann-Roch is not sensitive
to Gorenstein points. Therefore, the powerful technique introduced
by Kawakita (cf. \cite{Kk01, Kk05, Kk11}) to study singularities and
divisorial contraction by using singular Riemann-Roch formula is
not valid.

What we did in this note is basically pick convenient weighted
blowups, keep good track of terminal singularities, and  put them
into a right hierarchy. The hierarchy is as following: 1. terminal quotient
singularities; 2. $cA$ points; 3. $cA/r$ points; 4. $cD$
and $cAx/2$ poitns; 5. $cAx/4$, $cD/2$, and $cD/3$ points ; 6.
$cE_6$ points; 7. $cE/2$ points; 8. $cE_7$ points; 9. $cE_8$ points.

These involves careful elaborative case-by-case studies. The
reader might find that the structure is very similar to part of
Koll\'ar work in \cite{Ko99}. Indeed, a lots of materials can be found in the preprints of Hayakawa \cite{HaD, HaE}, in which he tried to classified all divisorial contractions with discrepancy $1$ over a $cD$ or $cE$ point. Many of our choices of resolutions are inspired by his works. This work can not be done without his work in \cite{HaD, HaE}. For reader's convenience and for the sake of self-contained, we choose to reproduce the proofs that we needed here.
The existence of divisorial contractions and explicit descriptions are already given by Hayakawa in his series of works. What is really new in this article is that we choose those convenient weighted blowups and work out the inductive process.

{\bf Acknowledgement.} This work was done during the author's
visit to RIMS as Visiting Professor. The author would like thank
the hospitality of RIMS. We would like to thank Prof.  Shigefumi
Mori for the invitation and encouraging discussion during the
preparation of the work. We are indebted to Hayakawa, Kawakita,
Kawamata, Koll\'ar, Matsuki for helpful discussions. The author
would also like to thank Koll\'ar for showing his work \cite{Ko99}.

\section{Preliminaries}

\subsection{weighted blowups}
We will need weighted blowup which are divisorial contraction with
minimal discrepancy. For this purpose, we first fix some notations.

Let $N=\bZ^n$ and $v_0=\frac{1}{r}(a_1,...,a_n)  \in \frac{1}{r}
\bZ^n$. We write $\overline{N}:=N + \bZ v_0$. Let $\sigma$ be the
cone of first quadrant, i.e. the cone generated by the standard
basis $e_1,...,e_n$ and $\Sigma$ be the fan consists of $\sigma$
and all the subcones of $\sigma$. We have that
$\cX_0:=\textrm{Spec} \bC[\sigma^\vee \cap \overline{M}]$ is a
quotient variety of $\bC^n$ by the cyclic group $\bZ/r\bZ$, which
we denote it as $\bC^n/v_0$ or $\bC^n/\frac{1}{r}(a_1,...,a_n)$.

For any primitive vector $v=\frac{1}{r}(b_1,b_2,...,b_n) \in
\overline N$ with $b_i > 0$, we can consider the weighted blowup
$\cX_1 \to \cX_0:= \bC^n/v_0$ with weight $v$, which is the toric
variety obtained by subdivision along $v$. More concretely, let
$\sigma_i$ be the cone generated by
$\{e_1,...,e_{i-1},v_1,e_{i+1},...,e_n\}$, then
$$\cX_1:=\cup_{i=1}^n \cU_i,$$ where $\cU_i =\textrm{Spec} \bC[
\sigma_i^\vee \cap \overline{M}]$. We always denote the origin of
$\cU_i$ by $Q_i$ and the exceptional divisor  $ \cE \cong
\bP((b_1,b_2,...,b_n))$ by $\bP(v)$.

For any semi-invariant $\varphi= \sum \alpha_{i_1,...,i_n}
x_1^{i_1}...x_n^{i_n}$, and  for any vector
$v=\frac{1}{r}(b_1,b_2,...,b_n) \in \overline{N}$ we define
$$wt_v(\varphi):=\min \{ \sum_{j=1}^n \frac{ b_j i_j}{r} | \alpha_{i_1,...,i_n} \ne 0 \}.$$

Let  $X \in \cX_0$ be a complete intersection defined by
semi-invariants $(\varphi_1=...=\varphi_c=0)$. Let $Y$ be its proper
transform in $\cX_1$. By abuse the notation, we also call the
induced map $f: Y \to X$ the weighted blowups of $X$ with weight
$v$, or denote it as $\wBl_v: Y \to X$. Notice that $Y \cap U_i$ is defined by
$\tilde{\varphi}_1=...=\tilde{\varphi}_c=0$ with
$$\tilde{\varphi}_j:=\varphi(x_1x_i^{\frac{a_1}{r_0}}, \ldots,x_{i-1}x_i^{\frac{a_{i-1}}{r_0}},x_i^{\frac{a_i}{r_0}},x_{i+1}x_i^{\frac{a_{i+1}}{r_0}},\dots,x_nx_i^{\frac{a_n}{r_0}})x_i^{-wt_{v_0}(\varphi)},$$
for each $j$. Let $E:=\cE \cap Y \subset
\bP(v)$ denote the exceptional divisor and $U_i:=\cU_i \cap Y$.


Let $X=( \varphi_1=\varphi_2=...=\varphi_c=0) \subset \bC^n/v_0$
be an irreducible variety such that $o=P \in X$ is the only
singularities. Let $Y \to X \ni P$ be a weighted blowup with
weight $v$ and exceptional divisor $E=\cE \cap Y \subset \bP(v)$.
We are interesting in $\text{Sing}(Y)$. We may decompose it into $$
\text{Sing}(Y)= \text{Sing}(Y)_{\ind=1} \cup \text{Sing}(Y)_{\ind>1},$$ where
$\text{Sing}(Y)_{\ind=1}$ (resp. $\text{Sing}(Y)_{\ind>1}$) denotes the locus of
singularities of index $=1$ (resp. $>1$).  Clearly, the locus of
points of index $>1$ in  $\cX_1$ coincide with $\text{Sing}(\bP(v))$.
Hence we have
$$\text{Sing}(Y)_{\ind>1} = Y \cap \text{Sing}(\bP(v))= E \cap \text{Sing}(\bP(v)).$$

We will need the following Lemma to determine singularities on $Y$
of index $1$.

 \begin{lem} \label{key} Keep the notation as above. Consider   $\wBl_v:Y \to X$.
 \begin{enumerate}
 \item  If $\cU_i \cong \bC^n$, then $\text{\Sing}(Y) \cap U_i \subset {\Sing}(E) \cap U_i$.

 \item If $Y$ is a terminal threefold, then ${\Sing}(Y)_{\ind=1} \subset \Sing(E)$
 \end{enumerate}
 \end{lem}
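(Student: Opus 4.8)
The plan is to reduce both statements to the Jacobian criterion in a smooth chart, so I first isolate the geometric input that drives everything. Since $P$ is the only singular point of $X$ and $\wBl_v\colon Y\to X$ is an isomorphism over $X\setminus\{P\}$, every singular point of $Y$ must lie on the exceptional divisor, i.e. $\Sing(Y)\subset E$. Hence for part (1) it suffices to analyze the points of $Y\cap U_i$ that lie on $E\cap U_i$. Working in a chart with $\cU_i\cong\bC^n$ and coordinates $x_1,\dots,x_n$ chosen so that the exceptional direction is $x_i$, we have $E\cap U_i=(x_i=0)\cap(Y\cap U_i)$, while $Y\cap U_i$ is the complete intersection $(\tilde\varphi_1=\dots=\tilde\varphi_c=0)$ sitting inside the smooth space $\bC^n$; thus singularities of $Y$ in this chart are detected purely by the rank of the Jacobian $J=(\partial\tilde\varphi_j/\partial x_k)$.

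The key computation for (1) is to compare $J$ with the Jacobian of $E$. Writing each $\tilde\varphi_j=\tilde\varphi_j^{(0)}+x_i\,\psi_j$ with $\tilde\varphi_j^{(0)}:=\tilde\varphi_j|_{x_i=0}$, the functions $\tilde\varphi_1^{(0)},\dots,\tilde\varphi_c^{(0)}$ cut out $E\cap U_i$ inside the smooth hyperplane $(x_i=0)\cong\bC^{n-1}$. At a point $q\in E\cap U_i$ one has $\partial\tilde\varphi_j/\partial x_k\,(q)=\partial\tilde\varphi_j^{(0)}/\partial x_k\,(q)$ for every $k\ne i$, since the extra term $x_i\psi_j$ contributes $x_i\,\partial\psi_j/\partial x_k$, which vanishes on $(x_i=0)$. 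Therefore the submatrix $J'$ of $J$ obtained by deleting the $x_i$-column is precisely the Jacobian of $E$ at $q$, and deleting a column cannot raise the rank, so $\rank J'(q)\le\rank J(q)$. As $\dimm Y=n-c$ and $\dimm E=n-c-1$, the point $q$ is singular on $Y$ exactly when $\rank J(q)<c$ and singular on $E$ exactly when $\rank J'(q)<c$; hence $q\in\Sing(Y)$ forces $q\in\Sing(E)$, proving (1).

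For (2) I would feed the index information into (1). By the discussion preceding the lemma, $\Sing(Y)_{\ind>1}=E\cap\Sing(\bP(v))$, so any $q\in\Sing(Y)_{\ind=1}$ satisfies $q\notin\Sing(\bP(v))$, i.e. $\bP(v)$ is smooth at $q$. If $q$ lies in a chart with $\cU_i\cong\bC^n$ we are immediately done by (1), so the content is to rule out, or otherwise handle, the case where every chart through $q$ is a nontrivial cyclic quotient. The approach I would take is to pass to the smooth toric cover $\bC^n\to\cU_i$, pull $Y$ and $E$ back to $G$-semi-invariant complete intersections, run the Jacobian argument of part (1) upstairs, and then descend. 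I expect the descent to be the main obstacle: one must use that $q$ has index $1$ together with the fact that $Y$ is a terminal threefold (so $q$ is an isolated $cDV$ point) to exclude the situation where a nontrivial quotient either forces the Gorenstein index above $1$ or creates a non-isolated singularity. Establishing this is exactly what guarantees that an index-$1$ singular point of $Y$ really lies in a smooth ambient chart and that singularity of $E$ descends alongside singularity of $Y$, at which point (1) applies and yields $\Sing(Y)_{\ind=1}\subset\Sing(E)$.
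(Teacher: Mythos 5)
Your part (1) is correct and is essentially the paper's own argument: compare the Jacobian of $Y\cap U_i$ with the Jacobian of $E\cap U_i$ inside the hyperplane $(x_i=0)$, noting that the terms divisible by $x_i$ contribute nothing to the relevant partial derivatives along $E$.

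For part (2), however, you have only sketched the strategy and explicitly deferred (``I expect the descent to be the main obstacle'') precisely the step that carries all the content; moreover the missing ingredient is not quite the one you point at. You correctly propose to pass to the cyclic cover $\rho\colon\bC^n\to\cU_i$ and run the Jacobian computation upstairs, but two things must be supplied, and both come from one observation that the paper states explicitly: a nontrivial quotient of a smooth three-dimensional point cannot be a singular terminal point of index $1$. First, the \emph{ascent}: if $q\in\Sing(Y)_{\ind=1}$, a preimage $\tilde q\in\rho^{-1}(q)$ must itself be a singular point of $\rho^{-1}(Y\cap U_i)$ --- for if the cover were smooth at $\tilde q$, then $q$ would be a (possibly trivial) quotient of a smooth threefold point, hence either smooth or a terminal quotient singularity of index $>1$, contradicting the hypothesis. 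Without this the upstairs Jacobian argument cannot even begin, since a genuine quotient singularity of $Y$ has smooth preimage. Second, the \emph{descent}: because $q$ has index $1$, the cyclic action is free near $\rho^{-1}(q)$, so $\rho$ is \'etale there, and this is what transports $\tilde q\in\Sing(\rho^{-1}(E))$ down to $q\in\Sing(E)$. Your closing claim that the argument shows an index-$1$ singular point ``really lies in a smooth ambient chart'' is not the right picture: the point stays in the quotient chart, and what one proves is that the covering map is \'etale at its necessarily singular preimage, so that the computation of part (1) descends. These are short arguments, but they \emph{are} the proof of (2), and your proposal does not contain them.
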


 \begin{proof} For each $i$, we may write  $\varphi_i=
 \varphi_{i,h}+\varphi_{i,r}$, where $\varphi_{i,h}$
 (resp. $\varphi_{i,r}$) denotes the homogeneous part (remaining part) of
 $\varphi_i$ with weight equals to $wt_v(\varphi_i)$.

 In order to prove $(2)$, it suffices to check that $\Sing(Y)_{\ind=1} \cap U_j \subset \Sing(E) \cap U_j$ for
 each $j$. Without loss of generality, we work on $U_n$. 
 For simplicity on notations, we also assume that $X$ is a hypersurface.

 Let $\rho_n: \bC^n \to U_n \cong \bC^n/\mu_r$ be the canonical projection. On $ \bC^n$, $\rho_n^{-1}(Y)$ is defined by semi-invariant
 $$\tilde{\varphi}=\widetilde{\varphi}_{h}+\widetilde{\varphi}_{r} =\varphi_{h}(x_1,...,x_{n-1},1) +\widetilde{\varphi}_{r},$$
 with $x_n |\widetilde{\varphi}_{r}$ and $\rho_i^{-1}(E) \subset \rho_n^{-1}(U_n)$ is defined by $\widetilde{\varphi}_{h}=x_n=0$.

 Note that $\Sing(Y) \subset E$ for $f: Y \to X$ is
 isomorphic away from $p \in X$. Also note that a quotient of a
 three dimensional smooth point can not be terminal
 singularity of index $1$.
 Therefore,
 $$\begin{array}{ll}
\rho_n^{-1}( \Sing(Y)_{\ind=1} \cap U_n) & =\rho_n^{-1}(
\Sing(Y)_{\ind=1} \cap U_n \cap
E) \\
& =\rho_n^{-1}( \Sing(Y \cap U_n)_{\ind=1} ) \cap \rho_n^{-1}(
E) \\
& \subset \Sing( \rho_n^{-1}(Y \cap
U_n)) \cap \rho_n^{-1}(E) \\
 & =\tilde{\varphi}=\tilde{\varphi}_{x_1}...=\tilde{\varphi}_{x_{n}}=x_n=0
 \\
 & \subset
 \tilde{\varphi}=\tilde{\varphi}_{x_1}=...=\tilde{\varphi}_{x_{n-1}}=x_n=0\\
 &=
 \tilde{\varphi}_{h}=\tilde{\varphi}_{h,x_1}=...=\tilde{\varphi}_{h,x_{n-1}}=0
 \\
 &= \Sing(\rho_n^{-1}(E\cap U_n)),
 \end{array}$$

where $\tilde{\varphi}_{x_i}$ denotes $\frac{\partial
\tilde{\varphi} }{\partial x_i}$.

 Since $\rho_n$ is \'etale, therefore, $\rho_n (\Sing(\rho_n^{-1}(E\cap
 U_n))) \subset \Sing(E \cap U_n)$. The statement follows for
 hypersurface.
The same argument works for higher codimension as well.

The proof for $(1)$ also follows from the similar argument.
 \end{proof}

\begin{cor} \label{qsmooth} Let $Y$ be  a terminal threefold obtained by  $\wBl_v: Y \to X$ with weight
$v$. Suppose that $E$ is a quasi-smooth weighted complete
intersection in $\bP(v)$, then $\Sing(Y)_{\ind=1} = \emptyset$.
\end{cor}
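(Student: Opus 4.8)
The plan is to deduce the Corollary directly from Lemma~\ref{key}(2) together with the geometric meaning of quasi-smoothness, so that the whole argument reduces to a chain of inclusions between known loci. Since $Y$ is a terminal threefold by hypothesis, Lemma~\ref{key}(2) applies verbatim and gives
$$\Sing(Y)_{\ind=1} \subset \Sing(E).$$
Thus it suffices to show that $E$ carries no singular point of index $1$; equivalently, that every singularity of $E$ is forced on it by the ambient quotient structure of $\bP(v)$ rather than by the defining equations.

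Next I would unwind the definition of quasi-smoothness. Let $\pi\colon \bC^n \setminus \{0\} \to \bP(v)$ be the canonical quotient by the weighted $\bC^*$-action, and let $\tilde E \subset \bC^n$ be the affine cone over $E$ cut out by the same weighted-homogeneous equations that define $E$ as a weighted complete intersection. By definition $E$ is quasi-smooth precisely when $\tilde E \setminus \{0\}$ is smooth. Over the locus where $\pi$ has trivial stabilizer, i.e. away from $\Sing(\bP(v))$, the map $\pi$ is étale locally, so $E$ inherits smoothness from $\tilde E \setminus \{0\}$ at such points. Consequently the only singularities $E$ can have are those sitting over the quotient-singular points of the ambient space, that is,
$$\Sing(E) \subset E \cap \Sing(\bP(v)).$$

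Finally I would invoke the identification recorded just before Lemma~\ref{key}, namely that the locus of points of index $>1$ in $\cX_1$ coincides with $\Sing(\bP(v))$, whence
$$E \cap \Sing(\bP(v)) = \Sing(Y)_{\ind>1}.$$
Chaining the three inclusions yields
$$\Sing(Y)_{\ind=1} \subset \Sing(E) \subset E \cap \Sing(\bP(v)) = \Sing(Y)_{\ind>1}.$$
Since the loci of index-$1$ and of index-${>}1$ points are disjoint by their very definition, the left-hand side is contained in a disjoint set and must therefore be empty, which is exactly the assertion $\Sing(Y)_{\ind=1} = \emptyset$.

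The only genuinely substantive step is the middle inclusion $\Sing(E) \subset E \cap \Sing(\bP(v))$: one must be careful that quasi-smoothness really excludes singularities of $E$ at the \emph{smooth} points of the weighted projective space, so that no index-$1$ singularity of $Y$ can be hiding there. Everything else is bookkeeping with Lemma~\ref{key} and the disjointness of the two index strata, so I expect this comparison between the cone $\tilde E$ and its image $E$ under $\pi$ to be the heart of the proof.
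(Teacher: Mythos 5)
Your proposal is correct and follows essentially the same route as the paper: apply Lemma~\ref{key}(2) to get $\Sing(Y)_{\ind=1}\subset\Sing(E)$, use quasi-smoothness to push $\Sing(E)$ into $E\cap\Sing(\bP(v))$, and conclude because that locus consists entirely of points of index $>1$. Your unwinding of quasi-smoothness via the affine cone is just a more explicit version of the paper's one-line assertion that $\Sing(E)=E\cap\Sing(\bP(v))$.
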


\begin{proof}
If $E$ is a quasi-smooth weighted complete intersection in
$\bP(v)$, then $\Sing(E)= E \cap \Sing(\bP(v))$. Therefore,
$$\Sing(Y)_{\ind=1} \subset \Sing(E) \subset \Sing(\bP(v)).$$
However, $\Sing(\bP(v))$ consists of quotient singularities of
index $>1$.
 This implies
in particular that $\Sing(Y)_{\ind=1} =\emptyset$.
\end{proof}

 \subsection{weighted blowup of threefolds}
 Given a threefold terminal singularity $P \in X=(\varphi=0) \subset \bC^4/v_0$ of index $r$, we usually consider
 weighted blowup $\wBl_v:Y \to X$ with weight $v=\frac{1}{r}(a_1,a_2,a_3, a_4)$ and $a_i
 \in \bZ_{>0}$ \footnote{ Divisorial contractions to a  point of index $r>1$ have been studied extensively by Hayakawa
 and are known to be weighted blowups}. It worths to determine when
 a weighted blowup is a divisorial contraction.

\begin{thm} \label{terminal}
 Let $P \in X=(\varphi=0) \subset \bC^4$ be a germ of three dimensional terminal singularity and
 $f=\wBl_v :Y \to X$  with weight
 $v=\frac{1}{r}(a_1,a_2,a_3,1)$ with exceptional divisor $E \subset \bP(v)$. Suppose that

\begin{itemize}
\item $E$ is irreducible;

\item  $\frac{1}{r}\sum a_i  -wt_v(\varphi)-1=\frac{1}{r}$;

\item  either $Y \cap U_4$ is terminal or  $E$ has  Du Val singularities on $U_4$.
 \end{itemize}  Then $Y \to X$ is a divisorial
 contraction.

 Moreover, $\Sing(Y)_{\ind=1} \cap U_4 \subset \Sing(E) \cap U_4$. For any
 $R \in \Sing(Y)_{ind=1} \cap U_4$, $R$ is at worst of type $cA$ (resp. $cD,
 cE_6, cE_7,cE_8$) if $R \in \Sing(E)$ is of type $A$ (resp. $D,
 E_6,E_7,E_8$).
\end{thm}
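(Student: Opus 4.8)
The plan is to extract the numerics and the contraction structure from the toric picture, reduce the assertion ``$f$ is a divisorial contraction'' to the terminality of $Y$, and then let the two ``moreover'' statements fall out of Lemma \ref{key} together with Reid's characterization of $cDV$ points. To record the discrepancy, note that for the weighted blowup of the ambient space the toric exceptional divisor $\cE\subset\bP(v)$ has log discrepancy $wt_v(dx_1\wedge\cdots\wedge dx_4)=\tfrac1r\sum a_i$, hence discrepancy $\tfrac1r\sum a_i-1$ over $\bC^4$. Since $Y$ is the proper transform of the hypersurface $X=(\varphi=0)$, adjunction yields
$$ K_Y = f^*K_X + \Bigl(\tfrac1r\textstyle\sum_i a_i - wt_v(\varphi) - 1\Bigr)E, $$
so the second hypothesis says exactly that $a(E,X)=\tfrac1r$, the minimal discrepancy for a contraction to a point of index $r$. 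Because a weighted blowup inserts a single ray, $Y$ is $\bQ$-factorial, $\rho(Y/X)=1$, and $-E$ is $f$-ample; combined with the irreducibility of $E$ this already exhibits $f$ as an extremal contraction of the single divisor $E$ to $P$. It therefore remains only to check that $Y$ is terminal.

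Terminality is the crux. Off the chart $U_4$ the singular locus of $Y$ lies in $\Sing(\bP(v))=\Sing(Y)_{\ind>1}$, i.e.\ in cyclic quotient singularities in $U_1,U_2,U_3$, which I would verify to be terminal by the standard toric terminal-quotient criterion applied to the local weights read off the cone generators. The essential chart is $U_4$. Since the last entry of $v$ is $1$, the chart $\cU_4$ is smooth, $\cU_4\cong\bC^4$, and the exceptional divisor is the genuine Cartier hyperplane section $E\cap U_4=(x_4=0)$ of the hypersurface $Y\cap U_4$. If $Y\cap U_4$ is terminal there is nothing to do; otherwise the hypothesis provides that this section is Du Val, and by Reid's theorem a threefold hypersurface one of whose hyperplane sections is Du Val is $cDV$, hence terminal Gorenstein, along that section. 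Either alternative gives $Y\cap U_4$ terminal, so $Y$ is terminal, completing the proof that $f$ is a divisorial contraction.

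Granting terminality, the inclusion $\Sing(Y)_{\ind=1}\cap U_4\subset\Sing(E)\cap U_4$ follows directly from Lemma \ref{key}: either from part (2), or, since $\cU_4\cong\bC^4$, already from part (1). For the type statement, take $R\in\Sing(Y)_{\ind=1}\cap U_4\subset\Sing(E)$; then $R$ is a Gorenstein (hence $cDV$) point of $Y$, and $E=(x_4=0)$ is a specific Cartier hyperplane section through $R$ which by assumption is Du Val of type $A$, $D$, $E_6$, $E_7$ or $E_8$. By upper-semicontinuity of the Du Val type, the general section through $R$ is no worse, so $R$ is of type at worst $cA$, $cD$, $cE_6$, $cE_7$ or $cE_8$ respectively.

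The main obstacle is precisely the $U_4$ terminality step and its refinement into the type statement. One must be sure that the weight-$1$ coordinate $x_4$ makes $\cU_4$ honestly smooth, so that $E\cap U_4$ is a true Cartier hyperplane section rather than an orbifold divisor; only then does the implication ``Du Val section $\Rightarrow$ $cDV$ point'' apply and only then is Lemma \ref{key}(1) available. The second delicate point is that the Du Val type of this one section bounds the $cDV$ type of $Y$ only from above, which is exactly why the conclusion must be phrased with the qualifier ``at worst.'' By contrast, the behaviour at the index $>1$ points is comparatively routine, being governed entirely by the standard toric terminal-quotient criterion.
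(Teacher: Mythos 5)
Your handling of the distinguished chart $U_4$ is essentially the paper's: adjunction gives $a(E,X)=\frac1r\sum a_i-wt_v(\varphi)-1=\frac1r$, Lemma \ref{key}(1) gives $\Sing(Y)\cap U_4\subset\Sing(E)\cap U_4$, and the ``Du Val section $\Rightarrow$ cDV of at worst that type'' step (via the equation $\psi+x_4g$ and semicontinuity) matches the paper, modulo your omission of the remark that $\Sing(Y)\cap U_4$ is then \emph{isolated}, which is what makes cDV imply terminal. The genuine gap is in your treatment of $Y$ away from $U_4$. You assert that off $U_4$ the singular locus of $Y$ lies in $\Sing(\bP(v))=\Sing(Y)_{\ind>1}$ and consists of cyclic quotient singularities checkable by the toric criterion. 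Neither claim is justified: $Y\setminus U_4$ is the proper transform $D_Y$ of $D=(x_4=0)$, and nothing in the hypotheses prevents $Y$ from having index-one hypersurface singularities along $D_Y$ that are unrelated to $\Sing(\bP(v))$ (indeed such points occur throughout the paper's applications, e.g.\ the $cA$ points on $U_3$ in Subcase 1-1 of Lemma \ref{mu3}); and even at points of $Y\cap\Sing(\bP(v))$, $Y$ is a hypersurface through a quotient point of the ambient space, so its singularity there is of type $cA/r$, $cAx/2$, $cD/3$, etc., not a cyclic quotient, and the toric terminal-quotient criterion does not apply. Since the hypotheses say nothing about $E$ or $Y$ on $U_1,U_2,U_3$, your argument cannot establish terminality there.

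The paper closes exactly this gap with the one idea your proposal is missing: the numerical hypothesis does more than fix the discrepancy. From $f^*D=D_Y+\frac1rE$ and $K_Y=f^*K_X+\frac1rE$ one gets $f^*(K_X+D)=K_Y+D_Y$, so $D_Y\sim_X-K_Y$; consequently, for any resolution $g:Z\to Y$ and any exceptional divisor $F$ with center contained in $D_Y$, the discrepancy $a(F,Y)$ equals the (positive) multiplicity of $F$ in $g^*D_Y$. This shows $Y$ is automatically terminal along $D_Y=Y\setminus U_4$, reducing everything to the chart $U_4$ where your argument does work. Without this step (or a substitute giving control over the charts $U_1,U_2,U_3$ from the stated hypotheses alone), the proof of ``$Y\to X$ is a divisorial contraction'' is incomplete.
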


\begin{proof}
Suppose that $E$ is irreducible, then $K_Y=f^*K_X+a(E,X)E$ with
$a(E,X)=\frac{1}{r}\sum a_i
 -wt_v(\varphi)-1$. Let $D=(x_4=0) \subset Div(X)$ and $D_Y$ be its
 proper transform in $Y$. One has $f^*D=D_Y+\frac{1}{r} E$.
Hence $\frac{1}{r}=\frac{1}{r}\sum a_i -1
 -wt_v(\varphi)$ implies that $$f^*(K_X+D)=K_Y+D_Y$$ and hence $D_Y
 \sim_X -K_Y$.

Let $g:Z \to Y$ be a resolution of $Y$. For any exceptional divisor
$F$ in $Z$ such that $g(Z) \subset D_Y$. One has $g^*D_Y=D_Z+mF+\ldots$
for some $m>0$. It follows that $a(F,Y)=m >0$. Hence $Y$ is terminal
if $Y -D_Y= Y \cap U_4$ is terminal.

In fact, by  Lemma \ref{key}.(1), one sees that
 $\Sing(Y) \cap U_4 \subset \Sing(E) \cap U_4$. If $E \cap U_4$ is DuVal, then $\Sing(E) \cap U_4$
is isolated hence so is $\Sing(Y) \cap U_4$. More precisely, for $R
\in \Sing(E) \cap U_4$ with local equation
$$\psi:=\varphi_h(x_1,...,x_4)|_{x_4=1}$$ of type $A$ (resp. $D,E$),
the local equation for $R \in Y \cap U_4$ is of the form
$$ \psi + x_4 g(x_1,...,x_4),$$
which is a compound DuVal equation. Therefore, if $R$ is singular in
$Y$, then $R$ is a at worst isolated cDV of type $cA$ (resp. $cD,
cE$) \footnote{If there are lower degree terms appearing in $g$,
then the singularity $R$ could be simpler or even non-singular}. By
results of Reid \cite{C3f},  Koll\'ar and Shephard-Barron \cite{KSB}, an isolated cDV singularity is
terminal. Hence $Y$ is terminal and therefore $f:Y \to X$ is a
divisorial contraction.
\end{proof}

In the sequel, all weighted blowups with discrepancy $1/r$ over a
terminal singularity  of index $r$ can easily checked to be
divisorial contractions with minimal discrepancy $1/r$ by the above
Theorem \ref{terminal} or by direct computation.



We will need some further easy but handy Lemmas.

\begin{lem} \label{u1}
Let $\wBl_v: Y \to X$ be a divisorial contraction. If
$wt_v(x^2)=wt_v(\varphi)$, then $\Sing(Y) \cap U_1 = \emptyset$.
\end{lem}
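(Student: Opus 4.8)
The plan is to bound $\Sing(Y)\cap U_1$ by $\Sing(E)\cap U_1$ using Lemma \ref{key}, and then to show the latter is empty by exploiting the hypothesis. Write $x=x_1$, so that $x_1$ is the exceptional coordinate on $U_1$, and decompose $\varphi=\varphi_h+\varphi_r$ into its weighted-homogeneous leading part and remainder as in the proof of Lemma \ref{key}. Being (the source of) a divisorial contraction, $Y$ is terminal, so Lemma \ref{key}(2) gives $\Sing(Y)_{\ind=1}\cap U_1\subset\Sing(E)\cap U_1$, and when $\cU_1\cong\bC^4$ Lemma \ref{key}(1) promotes this to $\Sing(Y)\cap U_1\subset\Sing(E)\cap U_1$. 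Thus it suffices to prove $\Sing(E)\cap U_1=\emptyset$.

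First I would unwind the hypothesis on $U_1$. Under the substitution defining $\tilde\varphi$, a monomial $x^I$ becomes $x_1^{\,wt_v(x^I)-wt_v(\varphi)}x_2^{i_2}x_3^{i_3}x_4^{i_4}$, so $wt_v(x_1^2)=wt_v(\varphi)$ says exactly that the coefficient of $x_1^2$ survives as a nonzero constant term of $\tilde\varphi$; in particular $Q_1\notin Y$ and $\tilde\varphi|_{x_1=0}=\varphi_h(1,x_2,x_3,x_4)$ has nonzero constant term. Because $x_1^2$ is of minimal weight with nonzero coefficient, the (weight-respecting) splitting lemma lets me complete the square in $x_1$, reducing to $\varphi=x_1^2+\psi(x_2,x_3,x_4)$ with $wt_v(\psi)=wt_v(\varphi)=:d$ without changing the contraction; then $\tilde\varphi=1+\tilde\psi$.

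The engine of the proof is a weighted Euler identity on $U_1$. Grading $\tilde\varphi$ by $x_1$-degree and using $2a_1=rd$ gives
\[
\sum_{j=2}^{4}a_j\,x_j\,\frac{\partial\tilde\varphi}{\partial x_j}-r\,x_1\,\frac{\partial\tilde\varphi}{\partial x_1}=rd\,(\tilde\varphi-1).
\]
At any point where all partials of $\tilde\varphi$ vanish the left side is $0$, forcing $\tilde\varphi=1\neq0$; hence no such point lies on $Y$. Restricting to $x_1=0$, the same relation applied to the weighted-homogeneous $\psi_h$ shows $d\psi_h\neq0$ along $E\cap U_1=\{\psi_h=-1\}$, so $E\cap U_1$ is smooth and $\Sing(E)\cap U_1=\emptyset$, which gives the claim.

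The step I expect to be the main obstacle is the descent from this smooth picture on the $\mu_{a_1}$-cover to the quotient chart $\cU_1=\bC^4/\mu_{a_1}$: a priori $Y$ could still carry a cyclic quotient singularity of index $>1$ where $\{\tilde\varphi=0\}$ meets the fixed locus of a nontrivial group element. This issue is vacuous exactly when $\cU_1\cong\bC^4$, which is the hypothesis making Lemma \ref{key}(1) available. In general one checks that $Y$ misses $\Sing(\bP(v))\cap U_1$: all such fixed loci lie in $\{x_1=0\}$ since $x_1$ carries a weight coprime to $a_1$, the deepest stratum $Q_1$ is already excluded by $\tilde\varphi(Q_1)\neq0$, and the remaining toric strata are dispatched by the same normal-form computation, in the spirit of Corollary \ref{qsmooth}.
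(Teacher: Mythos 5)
Your proof is correct and follows essentially the same route as the paper's: the hypothesis puts $\bfx^2$ into the leading form defining $E$, so $\Sing(E)\cap U_1=\emptyset$ (your weighted Euler identity is just an explicit version of the paper's observation that $\partial\Phi/\partial\bfx=2\bfx$ cannot vanish on the chart $\bfx\neq0$), Lemma \ref{key} then disposes of the index-one singularities, and $Q_1\notin Y$ handles the higher-index locus. The only differences are cosmetic: the paper simply asserts that $Q_1$ is the only index-$>1$ point of $U_1$ where you allow for larger toric strata, and in every situation where the lemma is invoked $\varphi$ has no cross terms $x\cdot g(y,z,u)$ of minimal weight, so your completion-of-the-square step is harmless.
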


\begin{proof}Now  $E$ is defined by $(\Phi: {\bf x}^2 +
f({\bf y} , {\bf z}, {\bf u})=0 ) \subset \bP(v)$.
It is clear that $\Sing(Y)_{\ind=1} \cap U_1 \subset \Sing(E) \cap U_1= \emptyset$.
Notice also that $Q_1$ is the only point in $U_1$ with index $>1$ and $Q_1 \not \in Y$. Hence $\Sing(Y) \cap U_1 = \emptyset$.
\end{proof}

\begin{lem} \label{nonsing} Let $\wBl_v: Y \to X$ be a divisorial contraction. If  ${ x}_i^m { x}_j \in {\varphi}$ with $wt_v({
x}_i^m { x}_j)= wt_v(\varphi)$  or ${ x}_i^m  \in {\varphi}$ with
$wt_v({ x}_i^m )= wt_v(\varphi)+1$, then $Y \cap U_i$ is
non-singular away from $Q_i$ and  $Q_i$ is either non-singular or
a terminal quotient singularity of index $wt_{v}(x_i)$. In particular, $\Sing(Y)_{\ind=1} \cap U_i =
\emptyset$.
\end{lem}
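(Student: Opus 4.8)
The plan is to localize on the affine chart $U_i$ and pull everything back along the cyclic cover $\rho_i:\bC^n\to\cU_i$ that appears in the proof of Lemma \ref{key}, so that $\rho_i^{-1}(Y\cap U_i)$ is the hypersurface $(\tilde\varphi=0)\subset\bC^n$ and $\rho_i$ is étale away from the coordinate hyperplanes. Write $\tilde\varphi=\tilde\varphi_h+x_i(\cdots)$, where $\tilde\varphi_h$ is the weight-$wt_v(\varphi)$ leading part and $x_i$ now denotes the exceptional chart coordinate, so that $E\cap U_i=(\tilde\varphi_h=0)\cap(x_i=0)$. The first thing I would do is extract a pure linear monomial from the hypothesis. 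After the toric substitution, the monomial $x_i^m x_j$ with $wt_v(x_i^m x_j)=wt_v(\varphi)$ transforms into $c\,x_j$ with $c\neq 0$, a term of $\tilde\varphi_h$ that is linear in the proper-transform coordinate $x_j$ and free of $x_i$; dually, the monomial $x_i^m$ with $wt_v(x_i^m)=wt_v(\varphi)+1$ transforms into $c\,x_i$ with $c\neq 0$, a term linear in the exceptional coordinate $x_i$.

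In either case the relevant partial derivative of $\tilde\varphi$ equals $c$ plus monomials that vanish at the origin $\mathbf 0$ of the chart, so $(\tilde\varphi=0)$ is smooth at $\mathbf 0$. Since $\rho_i(\mathbf 0)=Q_i$ and $\rho_i$ exhibits $U_i$ near $Q_i$ as a cyclic quotient, it follows that $Q_i$ is either smooth or a terminal cyclic quotient singularity of index $wt_v(x_i)$, which settles the description of $Q_i$.

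It then remains to show $Y\cap U_i$ is nonsingular away from $Q_i$. Because $f$ is an isomorphism over $X\setminus\{P\}$ we have $\Sing(Y)\cap U_i\subset E\cap U_i$, so I only need $\rho_i^{-1}(Y)$ to be smooth at each point of $E\cap U_i$ other than $\mathbf 0$. A direct Jacobian computation along $(x_i=0)$ shows that such a point can be singular only if it lies in $\Sing(\rho_i^{-1}(E\cap U_i))$, i.e. in the singular locus of $(\tilde\varphi_h=0)$, \emph{and} satisfies the vanishing of the coefficient of $x_i$ in $\tilde\varphi$. In Case A the pure linear term $c\,x_j$ lets me eliminate $x_j$ along $E$ and force $(\tilde\varphi_h=0)$ to be smooth there; in Case B, where $\tilde\varphi_h$ need not see the distinguished monomial, I would instead use that the coefficient of $x_i$ is $c+\cdots$, which is nonzero off a proper subset, together with Lemma \ref{key}(2) (valid since $Y$ is terminal) to conclude $\Sing(Y)_{\ind=1}\cap U_i\subset\Sing(E)\cap U_i$ reduces to $\{Q_i\}$. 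The step I expect to be the main obstacle is precisely this global elimination on the chart: one must rule out that other weight-$wt_v(\varphi)$ monomials divisible by $x_j$ make $\partial_{x_j}\tilde\varphi_h$ vanish at some point of $E\cap U_i$ away from $\mathbf 0$, and the leverage I would use is the weight-minimality of $x_i^m x_j$, which severely constrains such competing monomials. Finally, the in-particular statement is formal: the only possible singular point of $U_i$ is $Q_i$, whose index is $wt_v(x_i)$, so it is never an index-one singularity; hence $\Sing(Y)_{\ind=1}\cap U_i=\emptyset$.
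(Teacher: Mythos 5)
Your key step—after the toric substitution the distinguished monomial $x_i^mx_j$ (resp.\ $x_i^m$) contributes a genuinely linear term $c\,x_j$ (resp.\ $c\,x_i$) to $\tilde\varphi$—is exactly the paper's proof: the paper writes $\tilde\varphi=x_j+\text{others}$ (or $x_i+\text{others}$) and immediately concludes $Y\cap U_i\cong\bC^3/\bZ_{a_i}$. Your treatment of $Q_i$ (partial derivative equals $c$ plus terms vanishing at the origin, hence smooth upstairs, hence at worst a cyclic quotient of index $wt_v(x_i)$) is the correct and complete part of the argument, and it matches the paper.

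The problem is the clause ``non-singular away from $Q_i$,'' which you yourself flag as the main obstacle and do not close. The leverage you propose—that weight-minimality of $x_i^mx_j$ ``severely constrains'' competing monomials—does not work: minimality of $wt_v(x_i^mx_j)$ says nothing about \emph{other} monomials attaining the same minimal weight (or weight $wt_v(\varphi)+1$ in your Case B), and it is precisely their dehomogenizations that survive on $E\cap U_i$ and can kill the unit coefficient away from the origin. This is not a removable technicality: take $\varphi=x^2+y^2z+u^3+z^4$ (an isolated $cD$ point) with $v=(2,1,1,1)$, so $wt_v(\varphi)=3$ and $z^4\in\varphi$ has weight $wt_v(\varphi)+1$, putting you in Case B for $i=3$. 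Then $\wBl_v:Y\to X$ is a divisorial contraction (the exceptional divisor $(\bfy^2\bfz+\bfu^3=0)\subset\bP(2,1,1,1)$ is irreducible and $Y$ has only isolated $cA_2$ points and a $\frac12(1,1,1)$ point), yet on $U_3$ one computes $\tilde\varphi=z(1+x^2)+y^2+u^3$, which is singular at $(\pm i,0,0,0)\ne Q_3$; these are index-one $cA_2$ points, so even the final ``in particular'' clause fails. So the global elimination you were hoping for cannot be carried out from the stated hypotheses alone; it requires additional information about the weight-$wt_v(\varphi)$ and weight-$(wt_v(\varphi)+1)$ parts of $\varphi$ in each application. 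To be fair to you, the paper's own one-line proof passes over this point in exactly the same way (the implicit function theorem gives smoothness only near $Q_i$, not an isomorphism of the whole chart with $\bC^3/\bZ_{a_i}$), so you have correctly located the weak joint of the lemma—but neither your sketch nor the proposed fix establishes the statement as written.
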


\begin{proof}
On $U_i$, $Y\cap U_i$ is given by $(\tilde{\varphi}=0) \subset \bC^4/\bZ_{a_i}$ with
$$ \tilde{\varphi}= \left\{ \begin{array}{l}  x_j+\text{others}, \text{ if }  wt_v({x}_i^m { x}_j)= wt_v(\varphi); \\
x_i+\text{others}, \text{ if }  wt_v({ x}_i^m )= wt_v(\varphi)+1. \end{array} \right.$$
Hence $Y \cap U_i \cong \bC^3/\bZ_{a_i}$ and the statement follws.
\end{proof}


\begin{lem} \label{cA}
Consider  $X= (\varphi=0) \subset \bC^4$. Suppose that $R \in X$
is an isolated singularity and  $xy \in \varphi$ or $x^2+y^2 \in
\varphi$. Then $R$ is of cA-type.
\end{lem}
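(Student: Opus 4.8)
The plan is to bring $\varphi$ into the normal form $xy + \psi(z,u)$ that visibly defines a $cA$ singularity. First I would unify the two hypotheses: over $\bC$ the linear change of coordinates $x \mapsto x + \sqrt{-1}\,y$, $y \mapsto x - \sqrt{-1}\,y$ sends $x^2 + y^2$ to a nonzero multiple of $xy$ and is an isomorphism fixing the isolated singular point $R$, so it suffices to treat the case $xy \in \varphi$. Since $R$ is a singular point we have $\varphi \in \mathfrak{m}^2$, and its quadratic part $\varphi_2$ then contains the monomial $xy$, so that $\rank \varphi_2 \ge 2$.

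Next I would apply the splitting lemma (the analytic generalized Morse lemma): because the coefficient of $xy$ is a unit, $\partial\varphi/\partial x = y + \cdots$ and $\partial\varphi/\partial y = x + \cdots$ have independent linear parts, and successive coordinate changes $x \mapsto x + (\cdots)$, $y \mapsto y + (\cdots)$ eliminate every monomial divisible by $x$ or by $y$ other than $xy$ itself. This produces an analytic isomorphism under which $X \cong (xy + \psi(z,u) = 0)$ with $\psi \in \mathfrak{m}^2 \subset \bC\{z,u\}$.

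Finally I would read off the type. From the Jacobian one sees that $\Sing(X)$ is cut out by $x = y = 0$ together with $\psi = \psi_z = \psi_u = 0$; as $R$ is isolated this forces $\psi \ne 0$ and the origin to be an isolated critical point of $\psi$. Consequently a general hyperplane section through $R$ is an isolated surface singularity whose quadratic part still contains $xy$, so by the two-variable splitting lemma it is of the shape $x^2 + y^2 + c z^{k}$, i.e. Du Val of type $A_{k-1}$. By definition $R$ is therefore of $cA$-type.

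The step I expect to be the main obstacle is the middle one: justifying the coordinate change in the convergent power series ring and verifying that nothing obstructs the reduction to $xy + \psi(z,u)$. This is exactly the content of the splitting lemma, so I would either cite it directly or perform the elimination degree by degree in $\mathfrak{m}^k/\mathfrak{m}^{k+1}$, using at each stage that multiplication by the unit coefficient of $xy$ surjects onto the unwanted monomials $x^a y^b z^c u^d$ with $a + b \ge 1$. Once the normal form is in hand the identification with $cA$ is routine.
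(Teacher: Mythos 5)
Your proposal is correct and follows essentially the same route as the paper: the paper likewise reduces to the case $xy\in\varphi$ "up to a unit," uses the nonvanishing of $\frac{\partial^2\varphi}{\partial x\,\partial y}(R)$ to pass (via the implicit splitting argument you spell out) to the normal form $\bar{x}\bar{y}+\bar{f}(\bar{z},\bar{u})$ with $\operatorname{mult}\bar{f}\ge 2$, and concludes that $R$ is of type $cA$. Your version merely makes explicit the degree-by-degree elimination and the hyperplane-section check that the paper leaves implicit.
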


\begin{proof}
Up to a unit, we may assume that $\varphi=xy+xg(x,z,u)+yh(y,z,u)+\bar{\varphi}(z.u)$.
Since $\frac{\partial^2 \varphi}{\partial x \partial y}(R)=1 \ne 0$,
the local expansion near $R$ is of the form $\bar{x}
\bar{y}+\bar{f}(\bar{z},\bar{u})$ where $\bar{x}=x-x(R)$
respectively. Also $mult_{o} \bar{f} \ge 2$. Hence it is a $cA$
point.
\end{proof}

\begin{cor} \label{cAq}
Consider $X=(\varphi=0) \subset \bC^4/ \bZ_r$. Suppose that $R \in
\Sing(X)_{\ind=1}$ is an isolated singularity and either $xy  \in
\varphi$ or $x^2+y^2 \in \varphi$. Then $R$ is of $cA$-type.
\end{cor}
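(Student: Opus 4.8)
The plan is to reduce the statement to Lemma \ref{cA} by passing to the étale cover, the whole point being that the hypothesis $R \in \Sing(X)_{\ind=1}$ makes the quotient invisible near $R$. First I would record that, because $R$ has index $1$, the point $R$ is Gorenstein and the $\bZ_r$-action on $\bC^4$ has trivial stabilizer along the fibre $\rho^{-1}(R)$, where $\rho \colon \bC^4 \to \bC^4/\bZ_r$ is the quotient map. Hence $\rho$ is étale in a neighborhood of any preimage $\tilde R \in \rho^{-1}(R)$, and it restricts to an analytic isomorphism from a neighborhood of $\tilde R$ in $\rho^{-1}(X) = (\varphi = 0) \subset \bC^4$ onto a neighborhood of $R$ in $X$.

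Next I would transport the hypotheses across this local isomorphism. Since $R$ is an isolated singularity of $X$ and $\rho$ is a local isomorphism at $\tilde R$, the point $\tilde R$ is an isolated singularity of the hypersurface $(\varphi = 0) \subset \bC^4$. Moreover the defining equation upstairs is literally the same semi-invariant $\varphi$, so the assumption that $xy \in \varphi$ or $x^2 + y^2 \in \varphi$ holds verbatim for $(\varphi=0) \subset \bC^4$. Thus all the hypotheses of Lemma \ref{cA} are met at $\tilde R$.

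Finally I would apply Lemma \ref{cA} to the hypersurface $(\varphi=0) \subset \bC^4$ at $\tilde R$, obtaining that $\tilde R$ is of $cA$-type. Because being a $cA$ point is an analytic-local property, invariant under the local isomorphism induced by $\rho$, it follows that $R$ itself is of $cA$-type, which is the desired conclusion.

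The only step that genuinely needs care — and hence the main (mild) obstacle — is the étale claim: one must use the index-one hypothesis to ensure that every point of $\rho^{-1}(R)$ has trivial stabilizer, so that the cyclic cover is unramified there and the $cA$ structure both lifts and descends without alteration. Once this is in place, the corollary is a direct transfer of Lemma \ref{cA} through $\rho$, requiring no further computation.
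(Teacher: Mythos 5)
Your proof is correct and follows essentially the same route as the paper: the paper likewise observes that the index-one hypothesis forces $\pi^{-1}(R)$ to miss the fixed locus of the $\bZ_r$-action, so each preimage point is an isolated singularity of $(\varphi=0)\subset\bC^4$ to which Lemma \ref{cA} applies, and the $cA$ type descends. No substantive difference.
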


\begin{proof}
Let $\pi : \bC^4 \to \bC^4/\bZ_r$ is the quotient map. Since $R$
is a index $1$ point, $\pi^{-1}(R)$ does not intersects the fixed
locus of the $\bZ_r$ action. This implies in particular that
$\bZ_r$ acts on $\pi^{-1}(R)$ freely and each point of $Q \in
\pi^{-1}(R)$ is singular in $\pi^{-1}(X)$. By Lemma \ref{cA}, $Q$
is of type $cA$ and hence so is $R$.
\end{proof}

By the similar argument, one can also see the following
\begin{lem} \label{cDq}
Consider $X=(\varphi=0) \subset \bC^4/ \bZ_r$ with $r=1,2,3$. Suppose that
$\varphi=x^2+f(y,z,u)$ with $f_3$,  the $3$-jet  of $f$, is nonzero and not a cube. Let $R \in
\Sing(X)$ be an isolated singularity. Then $R$ could only be of type $cA, cA/r, cD, cD/r$ or a terminal quotient singularity.
\end{lem}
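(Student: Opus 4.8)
The plan is to reproduce the covering argument of Corollary \ref{cAq} to reduce the index-one points to hypersurface germs on the cover, to read off the singularity type of $x^2+f$ from its quadratic and cubic jets, and then to treat the remaining higher-index point through the quotient structure at the fixed point. Let $\pi\colon \bC^4 \to \bC^4/\bZ_r$ be the quotient map and put $\tilde X = \pi^{-1}(X) = (x^2+f=0)$. First I would split $\Sing(X)$ into its index-one and higher-index loci, as in Corollary \ref{cAq}. If $R$ has index one, then $\pi^{-1}(R)$ misses the fixed locus, $\bZ_r$ acts freely near it, and every $Q\in\pi^{-1}(R)$ is a singular point of $\tilde X$ analytically isomorphic to $R$. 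Thus it suffices to classify the germ $(\tilde X,Q)$ for singular points $Q$ of $(x^2+f=0)\subset\bC^4$ and to show it is of type $cA$ or $cD$.

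Next I would classify $(\tilde X,Q)$ by jets. At a singular point one has $x=0$ and $\nabla f(Q)=0$, and the quadratic part of $\varphi$ at $Q$ is $x^2+q_Q$, where $q_Q$ is the Hessian form of $f$ at $Q$. If $q_Q\neq 0$, then after diagonalizing the quadratic part contains $x^2+w^2$, so Lemma \ref{cA} shows $Q$ is of type $cA$. If $q_Q=0$, the quadratic part is the rank-one form $x^2$ and the type is governed by the cubic jet $c_Q$ of $f$ at $Q$: by Reid's classification of compound Du Val points, $(\tilde X,Q)$ is $cD$ when $c_Q$ is not a perfect cube of a linear form, and $cE$ when it is a cube (or vanishes). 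At $Q=o$ one has $c_o=f_3$, and the hypothesis that $f_3$ is nonzero and not a cube is exactly what is needed: restricting $f_3$ to a general line in $\bP^2_{[y:z:u]}$ gives the binary cubic governing the surface section, and a general such line meets $\{f_3=0\}$ in a divisor that is \emph{not} a triple point precisely because $\{f_3=0\}$ is not a triple line, so that binary cubic is not a cube and the origin is of type $cD$.

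For the higher-index points, since $r\in\{2,3\}$ is prime the only points of index $>1$ are images of $\bZ_r$-fixed points of $\tilde X$, which for an isolated singularity we may take to be the origin. The germ $(X,o)$ is then the quotient of $(\tilde X,o)$ by the linear $\bZ_r$-action, and by the previous step $(\tilde X,o)$ is smooth, $cA$, or $cD$; correspondingly $(X,o)$ is a terminal cyclic quotient singularity, a $cA/r$ point, or a $cD/r$ point. Here the restriction $r\le 3$ is what guarantees that the quotient of a $cD$ point remains terminal and excludes the index-four type $cAx/4$, while the exclusion of $cE$ on the cover rules out $cE/2$. This yields the stated list.

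The hard part will be excluding $cE$ at singular points $Q\neq o$: there the relevant cubic is the local jet $c_Q$, which is a priori unrelated to $f_3$, so one must argue that a degenerate critical point of $f$ carrying a perfect-cube cubic jet cannot coexist with $f_3$ being non-cube under the isolatedness (hence terminality) of $X$. A secondary, more bookkeeping difficulty is distinguishing $cA/r$ from the twisted type $cAx/2$ when $r=2$: this requires comparing the weights of the $\bZ_r$-action on $x,y,z,u$ against the normal form $x^2+f$ and invoking Mori's explicit list to confirm that $x^2+f$ with the given action produces a $cA/r$ rather than a $cAx/2$ point.
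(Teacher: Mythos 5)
Your first two paragraphs reconstruct exactly what the paper intends: its entire proof of Lemma \ref{cDq} is the phrase ``by the similar argument'', pointing back to the \'etale covering trick of Corollary \ref{cAq}, after which one reads off $cA$ versus $cD$ on the cover from the Hessian and the cubic jet, precisely as you do. The only substantive comment concerns the two ``hard parts'' you flag, both of which stem from reading the hypothesis globally. If ``$f_3$'' meant the $3$-jet of $f$ at the origin while $R$ ranges over all singular points of $X$, the lemma would in fact be false: by interpolation one can produce a polynomial $f$ whose $3$-jet at the origin is $y^2u+z^3$ but whose germ at a second isolated critical point is $\bar y^3+\bar z^4+\bar u^5$, so that $(x^2+f=0)$ acquires a $cE$ point away from the origin. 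The hypothesis must therefore be understood at (the preimage of) the point $R$ under consideration, and that is how the lemma is actually invoked later in the paper --- at each relevant point a visible low-order monomial in the local equation certifies that the local cubic jet is not a cube. Under that local reading your ``hard part'' about excluding $cE$ at $Q\neq o$ evaporates; no propagation from the origin is needed or possible. Likewise the $cA/2$-versus-$cAx/2$ ambiguity concerns only the single fixed point of the $\bZ_r$-action, which the paper always analyzes separately by hand (only the index-one part of the conclusion is ever cited), so it need not be settled inside this lemma. With the hypothesis localized, your argument is complete and coincides with the paper's intended one.
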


\section{resolution of $cA$ and $cA/r$ points}

\begin{lem} \label{quo}
Let $f: Y \to X$ be the economic resolution of   a terminal quotient
singularity $P \in X$. Then this is a feasible resolution
for $P \in X$.
\end{lem}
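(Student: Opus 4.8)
The plan is to present the economic resolution as a sequence of Kawamata blowups and to run an induction on the index $r$ of the quotient singularity. First I would invoke Reid's classification (the terminal lemma): a three-dimensional terminal quotient singularity $P \in X$ is a cyclic quotient of the form $\bC^3/\frac{1}{r}(1,a,r-a)$ with $\gcd(a,r)=1$, and $r=1$ forces $X$ to be smooth. So I may assume $r>1$ and work with this normal form, which is precisely the setting in which the economic resolution is defined.

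The elementary step at each stage is the Kawamata blowup, i.e. the weighted blowup $\wBl_v : Y \to X$ with $v=\frac{1}{r}(1,a,r-a)$. Its exceptional divisor $E\cong \bP(1,a,r-a)$ is irreducible, and the toric discrepancy formula gives $a(E,X)=\frac{1}{r}\bigl(1+a+(r-a)\bigr)-1=\frac{1}{r}$. Thus a single Kawamata blowup is exactly a divisorial contraction to the point $P$ of index $r$ with discrepancy $1/r$, which is the minimal discrepancy of a terminal quotient of index $r$; this matches the requirement in the definition of a feasible resolution.

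Next I would analyze $\Sing(Y)$ through the three affine charts $U_1,U_2,U_3$ obtained by subdividing the cone $\sigma$ along $v$. Because the first weight equals $1$, the chart $U_1$ is smooth, while $U_2$ and $U_3$ carry at worst cyclic quotient singularities of types $\frac{1}{a}(\cdots)$ and $\frac{1}{r-a}(\cdots)$. A direct combinatorial check on the subdivided cones — equivalently, Reid's terminal lemma applied to them — shows these are again \emph{terminal} cyclic quotient singularities, now of indices $a$ and $r-a$, both strictly smaller than $r$. Since the index strictly drops and only finitely many singular points occur, the induction on $r$ terminates at a smooth variety. The (at most two) singular points of a given $Y$ are treated one at a time; blowing up one point does not affect the analytic germ at the others, so the entire collection of Kawamata blowups linearizes into a single sequence $X_n \to \cdots \to X_0 = X$, each map a divisorial contraction to a point $P_i$ of index $r_i$ with discrepancy $1/r_i$.

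The main obstacle is the third step: verifying that the new singular points are terminal quotients of \emph{strictly smaller} index. This rests on the toric computation of the multiplicities of the subdivided cones together with the terminal lemma; everything else (the discrepancy bookkeeping and the organization of the branching into a single linear sequence) is routine.
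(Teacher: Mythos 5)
Your proposal is correct and follows essentially the same route as the paper: a Kawamata blowup with weight $\frac{1}{r}(s,r-s,1)$, the discrepancy computation giving $1/r$, the observation that the new singularities are terminal cyclic quotients of strictly smaller indices $s$ and $r-s$, and induction on the index. The paper states these steps more tersely (asserting rather than verifying that the two new points are terminal quotients of indices $s$ and $r-s$), but the substance is identical.
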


\begin{proof}
Given a terminal quotient singularity $P \in X$ of type
$\frac{1}{r}(s,r-s,1)$ with $s<r$ and $(s,r)=1$, we start by
considering the Kawamata blowup $Y \to X$ (cf. \cite{Ka}), i.e.
weighted blowup with weight $v=\frac{1}{r}(s,r-s,1)$ over $P$. It
is clearly a divisorial contraction with minimal discrepancy
$\frac{1}{r}$.

Note that $\Sing(Y)$ consists of at most two points $Q_1,Q_2$ of
index $s,r-s$ respectively.  By induction on $r$, we get a
resolution $Y=Y_{r-1} \to \ldots \to Y \to X \ni P$. It is easy to
see that this is the economic resolution.
\end{proof}

\begin{thm} \label{cArsln}
There is a feasible resolution  for any $cA$ points.
\end{thm}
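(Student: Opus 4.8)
The plan is to resolve a $cA$ point by a sequence of weighted blow-ups that reproduce, on the threefold, the classical embedded resolution of a plane curve, inducting on an invariant of that curve. First I would fix a normal form: a $cA$ point is analytically $P\in X=(\varphi=0)\subset\bC^4$ with $\varphi=xy+f(z,u)$, where $f$ is reduced with an isolated critical point (this is Lemma \ref{cA} read backwards, using $xy\in\varphi$). Put $m:=\mathrm{mult}_0 f\ge 2$; if $m=1$ the point is already smooth. The decisive preliminary computation is that, by the discrepancy formula in Theorem \ref{terminal}, a weighted blow-up $\wBl_v$ of $xy+f$ has discrepancy $1$ (the minimal discrepancy at an index-one point) \emph{only} if $v=(a_1,a_2,1,1)$ with $a_1+a_2\le \mathrm{mult}_0 f$: writing $wt_v(\varphi)=\min(a_1+a_2,\,wt_v(f))$ and imposing $wt_v(\varphi)=\sum a_i-2$ forces $a_3+a_4=2$. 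Moreover $E$ is irreducible exactly when $a_1+a_2=m$, since otherwise the weight-$(a_1+a_2)$ part of $\varphi$ is the reducible monomial $xy$. I would therefore take
$$v=(1,\,m-1,\,1,\,1),\qquad wt_v(\varphi)=m,$$
so that $E=(xy+f_m(z,u)=0)\subset\bP(1,m-1,1,1)$, with $f_m$ the leading (binary) form of $f$.

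Next I would verify that $\wBl_v\colon Y\to X$ is a divisorial contraction with controlled singularities. Irreducibility of $E$ is clear because $xy+f_m$ is linear in $x$ and $y\nmid f_m$; and on the weight-one charts one finds $E=(xy+f_m=0)$ has an $A_{e-1}$ Du Val point for each root of $f_m$ in $\bP^1$ of multiplicity $e\ge 2$, these being its only index-one singularities. Thus Theorem \ref{terminal} applies and gives that $Y\to X$ is a divisorial contraction and that every $R\in\Sing(Y)_{\ind=1}$ is of type $cA$, indexed by the multiple tangent directions of the curve $C=(f=0)$. On the remaining charts I would use Lemma \ref{nonsing} (applied to the monomial $xy$) to get $\Sing(Y)\cap U_1=\emptyset$ and to identify the unique point of index $>1$ as $Q_2$, a terminal cyclic quotient singularity of index $m-1$. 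Because $a_3=a_4=1$, the blow-up restricts on the $(z,u)$-factor to the \emph{ordinary} blow-up of $C$; a direct substitution shows that near each $R$ the equation is again $xy+\bar f(z,u)$, with $\bar f=0$ the strict transform of $C$ at the corresponding point. Hence the index-one singular points of $Y$ are once more $cA$ points in the same normal form, lying exactly over the singular points of the strict transform $\tilde C$.

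Finally I would induct. The terminal quotient point $Q_2$ has a feasible resolution by Lemma \ref{quo}, so only the new $cA$ points remain, and these correspond to the singularities of $\tilde C$; when $f_m$ has no repeated root (so $C$ has at most an ordinary multiple point) the surface $E$ is smooth away from $\Sing\bP(v)$, whence $\Sing(Y)_{\ind=1}=\emptyset$ by Corollary \ref{qsmooth} and the induction terminates. For the inductive measure I would use the Milnor number $\mu(C)$, equivalently the finite collection of infinitely near points of multiplicity $\ge 2$, which is standard to control. The hard part will be the bookkeeping in the \emph{unitangent} case, where $f_m$ is a power of a single linear form: here $\mathrm{mult}_0 f$ need not drop, so one must check both that the finer invariant $\mu$ (or $\delta$) nevertheless strictly decreases and that the new equation stays in the reduced normal form $xy+\bar f$. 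This is precisely the assertion that the classical resolution of the reduced plane curve $C$ by successive blow-ups terminates, now transported to the threefold — and it is the only point demanding genuine care, the rest being the routine chart-by-chart verifications already licensed by Theorem \ref{terminal}, Lemma \ref{nonsing}, and Corollary \ref{qsmooth}.
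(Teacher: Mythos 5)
Your construction coincides with the paper's: the normal form $xy+f(z,u)$, the weight $(1,m-1,1,1)$ (the paper's Case~1 with weight $(1,1,1,1)$ is just the instance $m=\tau=2$), the use of Lemma \ref{nonsing} to see that $Q_2$ is the only point of index $>1$, and the identification of $\Sing(Y)_{\ind=1}$ with the multiple roots of the leading form $f_m$, where the local equation is again $xy+\bar f$ with $\bar f$ the strict transform of the plane curve $(f=0)$ under an ordinary blowup. (Your preliminary observation that the discrepancy condition \emph{forces} $a_3=a_4=1$ and irreducibility of $E$ forces $a_1+a_2=m$ is correct and not in the paper; it is a nice justification of the choice of weight.) Where you genuinely diverge is the termination argument. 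The paper stays combinatorial on the Newton data: it inducts on $\tau=\mathrm{mult}_0 f$ when $f_\tau$ has at least two distinct linear factors (so every $l_t<\tau$), and in the unitangent case $f_\tau=z^\tau$ it introduces the secondary invariant $\tau^\sharp=\min\{i+j\mid z^iu^j\in f,\ i\le 1\}$ (finite because the singularity is isolated) and checks $\tau^\sharp(Q_4)<\tau^\sharp(P)$, concluding by induction on $\tau+\tau^\sharp$. You instead transport the whole problem to the embedded resolution of the reduced plane curve $C=(f=0)$ -- legitimately, since a repeated factor of $f$ would make $\Sing X$ one-dimensional -- and let a classical invariant certify termination. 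That works, but you should name and verify the decrease rather than call it standard: $\delta(C)$ drops by $\binom{m}{2}\ge 1$ at each blowup, and for the Milnor number one has $\mu(C)-\sum_i\mu(\tilde C,p_i)=m(m-1)-s+1\ge (m-1)^2\ge 1$ where $s\le m$ is the number of points of $\tilde C$ on the exceptional line; either settles your flagged unitangent worry. The trade-off is clear: the paper's route is self-contained and needs no plane-curve theory, while yours is shorter and more conceptual if one is willing to quote the classical resolution of plane curve singularities.
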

\begin{proof}
For any $cA$ point $p \in X$, there is an embedding $j: X \subset
\bC^4$ such that $P \in X$ is defined by $(\varphi: xy+f(z,u)=0)
\subset \bC^4$. We fix this embedding once and for all and define
$$\tau(P):=\min\{ i+j| z^iu^j \in f(z,u)\}.$$
We may and do assume that $z^\tau \in f$. We write $f=f_\tau+f_{>
\tau}$, where $f_{\tau}$ denote the homogeneous part of weight
$\tau$.

We need to introduce
$$ \tau^\sharp(P):=\min\{ i+j | z^iu^j \in f(z,u), i \le 1\}.$$

Since $P\in X$ is isolated, one has that $f$ contains a term of
the type $zu^{p-1}$ or $u^p$ for some $p$. Hence $\tau^\sharp(P)$
is well-defined. Notice also that $\tau(P) \le \tau^\sharp(P)$.

We shall prove by induction on $\tau$ and $\tau^\sharp$.


\noindent {\bf Case 1.} $\tau=2$.\\ By easy change of coordinates,
we may and do assume that $f(z,u)=z^2+u^b$. We take $Y \to X$ to
be the weighted blowup with weights $(1,1,1,1)$ (or the usual
blowup over $P$). It is clear that $\Sing(Y)=\{Q_4\}$, which is
defined by $$ (\tilde{\varphi}: xy+z^2+u^{b-2}=0)\subset \bC^4.$$
By induction on $b$, we are done.




\noindent
{\bf Case 2.} $ \tau >2$.\\
We may write $f_\tau= \prod (z-\alpha_t u)^{l_t}$ since $z^\tau
\in f$.  We take $\wBl_v: Y_1 \to X$  with
weights $v=(1,\tau-1,1,1)$. It is clear that
$\Sing(Y)_{\ind>1}=\{Q_2\}$, which is a terminal quotient
singularity of index $\tau-1$. Hence it remains to consider index
$1$ points.

We have that $$E =(\bfx \bfy+ \prod (\bfz-\alpha_t \bfu)^{l_t}=0)
\subset \bP(1,\tau-1,1,1).$$

Now $\Sing(E)=\{R_t= (0,0,\alpha_t,1)\}_{l_t \ge 2}$. In fact, for
any $R_t$ with $l_t \ge 2$, one sees that $R_t \subset E$ is a
singularity of $A$-type, it follows that if $R_t$ is singular in
$Y$, then it is of type $cA$ with $\tau(R_t) \le l_t$.

\noindent
{\bf Subcase 2-1.} $f_{\tau}$ factored into more than one factors.\\
Then $\tau(R_t) \le l_t < \tau$, then we are done by induction on
$\tau$.

\noindent
{\bf Subcase 2-2.} If $f_\tau$ factored into only one factor.\\
We may assume $f_\tau=z^\tau$ by changing coordinates. It is easy
to see that $\tau(Q_4) \le \tau^\sharp(Q_4) < \tau^\sharp(P)$
hence $$ \tau(Q_4)+\tau^\sharp(Q_4)< \tau(P)+\tau^\sharp(P).$$
Then we are done by induction on $\tau+\tau^\sharp$.
\end{proof}

\begin{cor} \label{cAr}
There is a feasible resolution for any $cA/r$ point.
\end{cor}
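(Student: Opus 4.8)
The plan is to reduce a $cA/r$ point to finitely many $cA$ points and terminal quotient singularities, and then invoke Theorem \ref{cArsln} together with Lemma \ref{quo}. Recall that a $cA/r$ germ may be written as $P \in X = (\varphi \colon xy + f(z^r,u) = 0) \subset \bC^4/\frac{1}{r}(a,r-a,1,0)$ with $\gcd(a,r)=1$ and $r \ge 2$; the essential feature for us is that the monomial $xy$ occurs in $\varphi$.

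First I would apply Theorem \ref{Haya} to this germ of index $r > 1$, producing a partial resolution $X_n \to \cdots \to X_0 = X$ in which $X_n$ is Gorenstein and every $f_i \colon X_{i+1} \to X_i$ is a weighted blowup that is a divisorial contraction to a point of index $r_i > 1$ with minimal discrepancy $1/r_i$. This is exactly the part of the hierarchy that Hayakawa's inductive scheme already controls and guarantees to terminate; it removes all points of index $> 1$. What is left uncontrolled by that theorem, and is the new content here, are the index-$1$ singularities that may be created along the way and on $X_n$.

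The heart of the matter is to show that every such index-$1$ point is again of $cA$ type. For the weighted blowups in question one arranges $wt_v(xy) = wt_v(\varphi)$, so $xy$ sits among the minimal-weight terms; on the charts $U_1, U_2$ the transform $\tilde\varphi$ acquires a linear term and Lemma \ref{u1} (resp. Lemma \ref{nonsing}) forbids index-$1$ singularities, while on the remaining charts $\tilde\varphi$ still contains the monomial $x_1 x_2$. Hence Corollary \ref{cAq} applies and forces every $R \in \Sing(Y)_{\ind=1}$ to be of $cA$ type. Feeding these into Theorem \ref{cArsln} — whose resolutions only ever produce further $cA$ points with strictly smaller invariant and terminal quotient singularities (disposed of by Lemma \ref{quo}) — and composing with the contractions supplied by Theorem \ref{Haya} yields the desired feasible resolution.

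The step I expect to be the main obstacle is the persistence of this structure: I must check that throughout Hayakawa's sequence the local equations keep $xy$ (or an equivalent $x^2 + y^2$) among their lowest-weight terms, so that Corollary \ref{cAq} is legitimately applicable at each stage, and that the higher-index points that reappear remain of $cA/r'$ type so that the inductive machinery of Theorem \ref{Haya} keeps applying. Since the transformed equation $\tilde\varphi$ is explicit on each toric chart, this amounts to verifying that the quadratic form in the $x,y$-directions stays nondegenerate at every index-$1$ point — a concrete, chart-by-chart verification rather than a conceptual difficulty.
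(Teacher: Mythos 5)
Your proposal is correct in outline, and its key mechanism --- the persistence of the monomial $xy$ through the weighted blowups, so that Corollary \ref{cAq} forces every index-$1$ singularity to be of type $cA$, after which Theorem \ref{cArsln} and Lemma \ref{quo} finish --- is exactly the mechanism the paper uses. The difference is organizational. You outsource the construction and termination of the index-$>1$ tower to Theorem \ref{Haya} and propose to check the index-$1$ residue afterwards; the paper instead writes down one explicit weighted blowup, $\wBl_v$ with $v=\frac{1}{r}(s,\kappa r-s,1,r)$ where $\kappa=\min\{i+j\mid z^{ir}u^j\in f\}$, observes chart by chart that $Q_1,Q_2$ are terminal quotient points, that $\Sing(Y)\cap U_3$ is of type $cA$ by Lemma \ref{cA}, and that $Q_4$ is again a $cA/r$ point, and then runs its own induction on the explicit invariant $\kappa+\kappa^\sharp$ (with $\kappa^\sharp=\min\{k\mid u^k\in f\}$), which strictly drops because $\kappa^\sharp(Q_4)=\kappa^\sharp(P)-\kappa(P)$ and $\kappa(Q_4)\le\kappa(P)$. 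The paper's version buys self-containedness: termination and $cA$-ness are verified simultaneously on explicit charts. The weak point of your version is that Theorem \ref{Haya}, being an existence statement, does not hand you the equations of the intermediate $X_i$; to run the Corollary \ref{cAq} argument you must either fix the blowups yourself (at which point you are doing the paper's computation) or import from \cite{HaI} the explicit form of \emph{all} divisorial contractions with minimal discrepancy over $cA/r$ points and check that each one preserves the $xy$-term and produces only terminal quotient or $cA/r$ points of higher index. The ``main obstacle'' you flag at the end is thus not a routine deferred check --- it is the entire content of the proof, and your write-up stops essentially where the paper's argument begins; but the route you describe, once that verification is supplied for a single well-chosen weight, is the paper's route.
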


\begin{proof}
Given $P \in X$ defined by
$$ (\varphi: xy+f(z^r,u)=xy+\sum a_{ij}z^{ir}u^j=0 )\subset \bC^4/\frac{1}{r}(s,r-s,1,r).$$

Let $$ \left\{ \begin{array}{l} \kappa^\sharp(\varphi):=\min\{k| u^k \in f\}, \\
  \kappa(\varphi):=\min\{ i +j | z^{ir}u^j \in f\}. \end{array} \right.$$
  We shall prove by
induction on $\kappa^\sharp+\kappa$. Note that there is some $u^k
\in f$ otherwise $P$ is not isolated. Thus $\kappa^\sharp+\kappa $
is  finite and $\kappa \le \kappa^\sharp$.

{\bf 1.} $\kappa^\sharp=1, \kappa=1$.\\
Then $P \in X$ is a terminal quotient  singularity. We are done.

{\bf 2.} $\kappa^\sharp+\kappa>2$.\\
We always consider $Y \to X$ the weighted blowup with weights
$\frac{1}{r}(s,  \kappa r-s, 1,r)$, which is a divisorial
contraction by \cite{HaI}. Computation on each charts similarly,
one sees the following:
\begin{enumerate}
\item $Y \cap U_1$ is singular only at $Q_1$, which is
 a terminal quotient singularity of index $s$ (non-singular on $U_1$ if $s=1$).

\item $Y \cap U_2$ is singular only at $Q_2$, which is a terminal
quotient singularity of index $\kappa r-s$ (non-singular on $U_2$
if $\kappa r-s=1$).

\item $Y \cap U_3$ is defined by $xy+f(z, uz) z^{-\kappa}=0
\subset \bC^4$. Hence $\Sing(Y) \cap U_3$ must be of type $cA$ by
Lemma \ref{cA}. There exists a feasible resolution over these
points.


\item it remains to consider $Q_4$, which is locally defined by
$$ (\tilde{\varphi}:xy+\sum a_{ij}z^{ir}u^{i+j-\kappa}=0) \subset \bC^4/\frac{1}{r}(s,r-s,1,r).$$
\end{enumerate}

In fact,  one sees that
$\kappa^\sharp(Q_4)=\kappa^\sharp(P)-\kappa(P)$ and $\kappa(Q_4) \le \kappa(P)$.

By induction on $\kappa^\sharp+\kappa$, we have a feasible resolution  over $Q_4$.
Together with feasible resolution over other singularities on $Y$, we have a feasible resolution over $Y$ and hence over $X$.
\end{proof}

\section{resolution of $cD$ and $cAx/2$ points}
Given a $cD_n$ point $P \in X$ which is defined by $(\varphi: x^2+y^2z+z^{n-1}+u
g(x,y,z,u)=0) \subset \bC^4$ for some $n \ge 4$.
We start by considering the {\it normal form} of $cD$
singularities.

\begin{defn} We say that a $cD$ point $P \in X$ admits a  normal form if
there is an embedding
$$ (\varphi: x^2+y^2z+\lambda yu^l+f(z,u)=0)\subset \bC^4 $$
with the following properties:
\begin{enumerate}

\item $l \ge 2$. (We adapt the convention that $l=\infty $ if
$\lambda=0$.)

 \item $zu^{p-1} \in f$ or $u^p \in f$ for some $p>0$
if $\lambda = 0$.

\item $z^{q-1}u \in f$ or $z^q \in f$ for some $q>0$.

\end{enumerate}

 An isolated
singularity $P \in X$ given by this form (with $l \ge 0$ and
possibly not of $cD$ type) is called a $cD$-like singularity,
which is terminal.

For a $cD$-like singularity $P \in X$, we define
$$\left\{\begin{array}{l}
\mu^\sharp(P \in X):= \min \{2i+j | z^i u^j \in \varphi, i=0 \text{ or } 1\}; \\
 \mu(P \in X):= \min \{ 2i+j |z^i u^j \in \varphi\}; \\
\mu^\flat(P \in X):= \min\{ \mu(P \in X), 2l-2\}; \\
\tau^\sharp(P \in X):= \min \{i+j | z^i u^j \in \varphi, i=0 \text{ or } 1\}.\\
 \end{array} \right.
$$
\end{defn}

Clearly, one has $\mu^\flat \le \mu \le \mu^\sharp \le \infty$. Also $\mu^\sharp, \tau^\sharp < \infty$ if $\lambda =0$.

\begin{lem} \label{mu3}
Given a $cD$-like point $P \in X$ defined by $$(\varphi \colon
x^2+y^2z+\lambda yu^l+f(z,u)=0) \subset \bC^4,$$ with $\mu^\flat \le 3$.  Then  there exists a feasible
resolution for $P \in X$.
\end{lem}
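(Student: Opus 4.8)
The plan is to induct within the class $\mu^\flat\le 3$, using the explicit shape of the low-order terms to split into four subcases and to reduce three of them to singularities already treated by Theorem \ref{cArsln} and Corollary \ref{cAr}. First I would dispose of the degenerate subcases by coordinate changes. If $\mu(P)=2$ then, since a linear term $z\in\varphi$ would make $P$ smooth, we must have $u^2\in f$; thus $x^2+\alpha u^2\in\varphi$ with $\alpha\ne 0$, and Lemma \ref{cA} (or Corollary \ref{cAq}) shows $P$ is of $cA$-type. If $\mu(P)=3$ with $zu\in f$, then $y^2z+c\,zu=z(y^2+cu+\cdots)$, and the analytic change $\hat u:=y^2+cu+\cdots$ (legitimate since the coefficient of $u$ is $c\ne 0$) produces the monomial $z\hat u\in\varphi$; again Lemma \ref{cA} gives a $cA$ point. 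This leaves two genuine subcases: (a) $\mu=3$ realised by $u^3$ with $zu\notin f$, and (b) $l=2$ (so $\mu^\flat=2l-2=2$) with $\mu\ge 3$ and $zu\notin f$.

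For subcase (a) the point is $cD_4$-like, $\varphi=x^2+y^2z+\lambda yu^l+u^3+\cdots$ with $l\ge 3$ and a tail $z^q$ (or $z^{q-1}u$) forced by isolatedness. Here I would take $\wBl_v\colon Y\to X$ with weight $v=(2,1,1,1)$. Then $wt_v(\varphi)=3$ is attained by $y^2z$ and $u^3$ while $wt_v(x^2)=4$, so the discrepancy is $5-3-1=1$ and $E=(y^2z+u^3+\cdots=0)\subset\bP(2,1,1,1)$ is reduced and irreducible; Theorem \ref{terminal} then makes $f$ a divisorial contraction with minimal discrepancy. On $U_1$ Lemma \ref{nonsing}, applied to $x^2$ with $wt_v(x^2)=wt_v(\varphi)+1$, leaves at worst a terminal quotient point $Q_1$ of index $2$, resolved by Lemma \ref{quo}; on $U_2$ the hypersurface $E$ is smooth so $Y$ is smooth there by Lemma \ref{key}(1); the index-$1$ locus on the remaining charts lies in $\Sing(E)$ by Lemma \ref{key}, and I would check via Lemma \ref{cA} and Corollary \ref{cAq} that these are $cA$ points, the sole exception being the origin $Q_3$ of $U_3$, which is again $cD_4$-like but with the $z$-tail dropped from $z^q$ to $z^{q-3}$. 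Since $\mu(Q_3)=3$ is unchanged while the $z$-order strictly decreases, I would close subcase (a) by a secondary induction on the $z$-order $q$ (the $cD_n$-index); for small $q$ the chart $U_3$ produces either a smooth point, a point $Q_3\notin Y$, or $\mu(Q_3)=2$, hence a $cA$ point.

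For subcase (b) the term $\lambda yu^2$ is the lowest one involving $y$, and blowing up with the weight adapted to balancing $x^2$, $y^2z$ and $\lambda yu^2$ produces an index-$2$ point of $cAx/2$-type together with quotient and $cA$ points; the $cAx/2$ points are governed by the parallel normal form treated in this same section, so the argument is completed there, and the quotient and $cA$ points are handled by Lemma \ref{quo} and Theorem \ref{cArsln}.

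The main obstacle is the bookkeeping in subcase (a): one must verify on every chart that the only index-$1$ singularities created are $cA$, and that the single surviving $cD$-like point $Q_3$ strictly decreases the secondary invariant while still satisfying conditions (1)--(3) of the normal form, so that both inductions terminate. The delicate feature is that $\mu^\flat$ itself does \emph{not} drop under this blowup, so one is forced to track the finer $z$-order (and, in subcase (b), the emergence of the index-$2$ $cAx/2$ stratum) rather than $\mu^\flat$ alone.
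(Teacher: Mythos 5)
Your treatment of the case $l\ge 3$ (your subcase (a)) is essentially the paper's: the same weight $(2,1,1,1)$, the same chart analysis via Lemma \ref{nonsing} and Theorem \ref{terminal}, and the same secondary induction on the order of the $z$-tail $z^q$ (resp.\ $z^{q-1}u$), which is exactly the invariant the paper tracks in its Subcase 1-2 (the paper writes ``$\tau(Q_3)\le\tau(P)-2$'' for this). Your preliminary reductions ($\mu\le 2$, or $\mu=3$ via $zu$, give $cA$ points by Lemma \ref{cA}) also match the paper's opening remark. The only caveat there is that when $f_3$ is a perfect cube $(u-\alpha z)^3$ with $\alpha\ne 0$ the residual singular point is not the origin $Q_3$ of $U_3$; one must first normalize $f_3=u^3$ by the substitution $\bar u=u-\alpha z$ (and absorb the $\lambda yu^l$ term by completing the square) before the induction on the $z$-order can be iterated. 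That is a routine fix.

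The genuine gap is in your subcase (b), $l=2$. First, there is no integral weight ``balancing $x^2$, $y^2z$ and $\lambda yu^2$'': the paper has to use $(2,2,1,1)$ when $\varphi_3=y^2z+\lambda yu^2+f_3$ is reducible or $f_3=0$, and $(2,1,1,1)$ when it is irreducible, and the chart analysis is different in the two situations. Second, your claim that the blowup yields only ``an index-$2$ point together with quotient and $cA$ points'' is false in general: when $f_3=0$ (and also in the irreducible case with $f_3$ a perfect cube) the origin of $U_3$ is again a $cD$-like point of the same shape, and one needs the same secondary induction on the $z$-tail that you ran in subcase (a); you explicitly flag this phenomenon in subcase (a) but then drop it in subcase (b). Third, and most seriously, identifying the index-$2$ point as being ``of $cAx/2$-type'' and closing the argument by appeal to ``the parallel normal form treated in this same section'' is circular: the resolution of $cAx/2$ points with $\tau\ge 4$ is proved later by an induction whose hypothesis is the existence of feasible resolutions for $cD$-like points with $\mu^\flat<\tau$, and for $\tau=4$ that hypothesis is precisely Lemma \ref{mu3}. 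The paper avoids this by verifying that the index-$2$ points produced here are in fact $cA/2$ (the equation on $U_2$ acquires a monomial $yz$, so Corollary \ref{cAq} applies and one lands in the already-proven Corollary \ref{cAr}), not $cAx/2$. Without that verification your induction is not well-founded.
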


\begin{proof}
If there is a linear or quadratic term in $f$, then $P$ is
non-singular or of $cA$-type by Lemma \ref{cA}. In particular,
feasible resolution exists.
 We thus assume that $l \ge 2$ and we may write $$f(z,u)=f_3(z,u)+f_{\ge 4}(z,u),$$
where $f_3(z,u)$ (resp. $f_{\ge 4}(z,u)$) is the $3$-jet (resp. $4$ and higher jets) of $f(z,u)$.

\noindent
{\bf Case 1.} $l \ge 3$.\\
If $\mu \le 2$, then $P$ is at worst of
type $cA$. Thus we may and do assume that
 $\mu=3$ and hence $u^3 \in f_3 \ne 0$. Clearly,
$\varphi_3=y^2z+f_3$ is irreducible.

\noindent
{\bf Subcase 1-1.} $f_3$ is factored into more than one  factors.\\
We consider  $\wBl_v: Y \to X$ with weight $v=(2,1,1,1)$.
One can verify that  $E =( \bfy^2 \bfz+ f_3(\bfz,\bfu)=0) \subset \bP(2,1,1,1)$ is
 irreducible. By Lemma \ref{nonsing}, one has that $\Sing(Y)
\cap U_i$ is non-singular away from $Q_i$ for $i=1,2$. In fact,
$Y$ is non-singular at $Q_2$. Therefore, $Y \to X$ is a divisorial
contraction by Theorem \ref{terminal}.

Since $Q_4 \not \in Y$, it
remains to consider $Y \cap U_3$, which is defined by
$$\begin{array}{ll} \tilde{\varphi}: & x^2z+ y^2+\lambda
yz^{l-2}u^{l}+\tilde{f}_3(z,u)+\tilde{f}_{\ge 4}\\
& =x^2z+y^2 +\lambda yz^{l-2}u^{l}+ \prod ( u-
\alpha_t)^{\l_t}+\tilde{f}_{\ge 4},\end{array}$$

where $\tilde{f}_3(z,u)$ (resp. $ \tilde{f}_{\ge 4}(z,u)$ ) denotes the proper transform of $f_3(z,u)$ (resp. $f_{\ge 4}(z,u)$).
More explicitly, $\tilde{f}_3(z,u) = f_3(z,zu)z^{- wt_v(\varphi)}$.

Let $R$ be a singular point in $ \Sing(Y) \cap U_3$. If $f_3$ is factored into more than
one  factors, then $l_t \le 2$ for all $t$. It follows that
$R$ is at worst of $cA$ type  by Lemma \ref{cA}. Notice also that
$\Sing(Y)_{\ind>1}=\{Q_1\}$ which is a quotient singularity of index
$2$. Thus feasible resolution exists.

\noindent
{\bf Subcase 1-2.} $f_3$ is factored into one factor.\\
We thus assume that $f_3=(u-\alpha z)^3$. Change coordinate by
$$\left\{ \begin{array}{l}
\bar{u}=u- \alpha z; \\
\bar{y}=y+\frac{\lambda}{2} \sum_{j \ge 1} C^l_j  \alpha^j z^{j-1}
\bar{u}^{l-j};
\end{array} \right.$$
then we have $$\varphi= x^2+\bar{y}^2 z +\lambda \bar{y} \bar{u}^l+
\bar{u}^3 + \bar{f}_{\ge 4} (z, \bar{u}).$$ Therefore, we may and do
assume that $f_3=u^3$ in the normal form.

We consider again  $\wBl_v: Y \to X$ with weight
$v=(2,1,1,1)$. Since $$E=(\bfy^2 \bfz + \bfu^3=0) \subset
\bP(2,1,1,1),$$ therefore  $Y \to X$ is a divisorial contraction
by Theorem \ref{terminal} (where $U_4$ is replaced by $U_2$).
Moreover,  $\Sing(E) \subset (\bfy=\bfu=0)$, hence $\Sing(E) \subset
U_1 \cup U_3$. Together with Lemma \ref{nonsing}, it remains to
consider $Q_3$, which is a $cD$ point given by
$$ \tilde{\varphi}: x^2z+y^2+\lambda y u^l
z^{l-2}+u^3+\tilde{f}_{\ge 4}.$$

Change coordinate by $\bar{y}:=x, \bar{x}:=y+\frac{1}{2}\lambda u^l=z^{l-2}$, one sees that $Q_3$ is $cD$-like given by
$$ \tilde{\varphi}: \bar{x}^2+\bar{y}^2 z+u^3+\tilde{f}_{\ge 4}-\frac{1}{4}\lambda^2 u^{2l}
z^{2l-4}.$$

Clearly, $Q_3$ is still in Subcase 1-2 and $\tau(Q_3) \le \tau(P)-2$. By induction on $\tau$, we conclude that feasible resolution exists for this case.

\bigskip
\noindent
{\bf Case 2.} $l=2$.\\
{\bf Subcase 2-1.} $f_3 = 0$.\\
We consider  $\wBl_v: Y \to X$ with
weight $v=(2,2,1,1)$. Now $$E=(\bfx^2  + \lambda \bfy \bfu^2+
f_4(\bfz,\bfu)=0) \subset \bP(2,2,1,1)$$ is clearly irreducible.
By considering $Y \cap U_4$, which is nonsingular, one has that $Y
\to X$ is a divisorial contraction by Lemma \ref{nonsing} and
Theorem \ref{terminal}.

One sees that  $\Sing(E) \subset (\bfx=\bfu=0)$, hence $\Sing(E)
\subset U_2 \cup U_3$. Notice also that $Q_1 \not \in Y$ and $Q_2
\in Y$ is a $cA/2$ point. Together with Lemma \ref{cA}, it remains
to consider $Q_3$, which is a $cD$-like point and still in Subcase 2-1. Clearly, $\tau(Q_3) \le \tau(P)-2$. By induction on $\tau$, we conclude that feasible resolution exists for this case.


\noindent
{\bf Subcase 2-2.} $f_3 \ne 0$ and  $\varphi_3=y^2z+\lambda yu^2+f_3$ is irreducible.\\
We consider  $\wBl_v: Y \to X$ with weights $v=(2,1,1,1)$.
Now $$E =( \bfy^2 \bfz+\lambda \bfy \bfu^2+ f_3(\bfz,\bfu)=0)
\subset \bP(2,1,1,1).$$ One has that $Y \to X$ is a divisorial
contraction by the same reason. By Lemma \ref{nonsing}, it's clear
that $\Sing(Y) \cap (U_1 \cup U_2 \cup U_4)=\{Q_1\}$, which is a
quotient singularity of index $2$.

It remains to consider $Q_3$.
 If $f_3$ is factored into more than one factors then the same
argument in Subcase 1-1 works. We thus assume that
$f_3=(\beta u+\alpha z)^3$. In fact, one sees that $Q_3$ is
singular only when $f_3=u^3$. Argue as in Subcase 1-2. We
have a feasible resolution for $P \in X$.

\noindent
{\bf Subcase 2-3.} $f_3 \ne 0$ and  $\varphi_3= y^2z+\lambda yu^2+f_3$ is reducible.\\
In this situation, $y^2z+\lambda yu^2+f_3 = (y+l(z,u))(zy+\lambda
u^2-l(z,u)z)$ for some linear form $l(z,u) \ne 0$. Let
$\bar{y}=y+l(z,u)$, then we have $$\varphi_3=\bar{y}^2z+\lambda
\bar{y}u^2-2\bar{y}zl(z,u).$$ We consider weighted blowup $Y \to
X$ with weights $(2,2,1,1)$. Now $$E =(\bfx^2+ \lambda \bfy
\bfu^2 -2 \bfy \bfz l(\bfz,\bfu)+f_4(\bfz,\bfu)=0) \subset
\bP(2,2,1,1),$$ is clearly irreducible. By considering $Y \cap
U_4$, which is nonsingular by Lemma \ref{nonsing}, one has that $Y \to X$ is a divisorial
contraction  by Theorem \ref{terminal}.
Since $l(z,u) \ne 0$, one sees that $Y \cap U_2$ has at worst
$cA/2$ singularities and $Y \cap U_3$ has at worst $cA$
singularities. Therefore feasible resolution exists.
\end{proof}

By \cite[Proposition 1.3]{Ma}, we have that
\begin{enumerate}
\item if $P \in X$ is $cD_4$, then $ \varphi =
x^2+\varphi_3(y,z,u)+\varphi_{\ge 4}(y,z,u)$ with $\varphi_3(y,z,u)$ is not
divisible by a square of a linear form;

\item  if $P \in X$ is
$cD_n$ with $n \ge 5$, then $ \varphi = x^2+y^2z+\varphi_{\ge
4}(y,z,u)$.
\end{enumerate}

Therefore, the plan is as following: for $cD_4$ points, the parallel argument as in Lemma \ref{mu3} works. For $cD_n \ge 5$ points, which always admits normal forms,
we prove by induction on $\mu^\flat$. We will need to consider  $cAx/2$ points simultaneously in the induction.

\begin{prop}
There is a feasible resolution  for any $cD_4$ singularity.
\end{prop}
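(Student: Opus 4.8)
The plan is to run the argument of Lemma \ref{mu3} in parallel, with the ternary cubic $\varphi_3(y,z,u)$ playing the role of the binary cubic $f_3(z,u)$ there, and to induct on $\tau$. By \cite[Proposition 1.3]{Ma} we may assume $\varphi = x^2 + \varphi_3(y,z,u) + \varphi_{\ge 4}(y,z,u)$ with $\varphi_3 \ne 0$ not divisible by the square of a linear form; since a non-reduced cubic necessarily has a repeated linear factor, this means precisely that the plane cubic $C = (\varphi_3 = 0) \subset \bP^2$ is reduced. The basic tool is the weighted blowup $\wBl_v \colon Y \to X$ with weight $v = (2,1,1,1)$: here $wt_v(\varphi) = 3$, the discrepancy is $(2+1+1+1) - 3 - 1 = 1$, and the exceptional divisor $E = (\varphi_3(\bfy,\bfz,\bfu) = 0) \subset \bP(2,1,1,1)$ is the projective cone over $C$ with vertex $Q_1$. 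The whole proof then splits according to how $\varphi_3$ factors.

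First I would treat the case in which $\varphi_3$ is irreducible, so that $C$ is an irreducible reduced plane cubic and $E$, being the cone over an irreducible curve, is irreducible. An irreducible reduced plane cubic is smooth, nodal, or cuspidal, so its singularities are of type $A_1$ or $A_2$; away from the vertex $\Sing(E)$ is the cone over $\Sing(C)$, and hence every point of $\Sing(E)$ of index $1$ is of type $A$. On each of the weight-one charts $U_2, U_3, U_4 \cong \bC^3$, Lemma \ref{key}(1) gives $\Sing(Y) \cap U_i \subset \Sing(E) \cap U_i$, and the compound Du Val structure used in Theorem \ref{terminal} shows that the index-one singular points of $Y$ there are isolated and at worst of type $cA$; these admit feasible resolutions by Theorem \ref{cArsln}. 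The remaining singularities lie on $\Sing(\bP(2,1,1,1))$, i.e. at the vertex $Q_1$, which by a computation on $U_1$ is a terminal quotient point or a $cA/2$ point and is resolved by Lemma \ref{quo} and Corollary \ref{cAr}. Thus the irreducible case closes with no recursion into a $cD$ point.

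The substantial case is when $\varphi_3$ is reducible, where $E$ is reducible and $\wBl_{(2,1,1,1)}$ is no longer a divisorial contraction. Here I would follow Subcases 2-1 and 2-3 of Lemma \ref{mu3}: after moving a linear factor of $\varphi_3$ to a coordinate hyperplane, say $\varphi_3 = y\,Q(y,z,u)$, the weight $(2,2,1,1)$ blowup has irreducible exceptional divisor and discrepancy $1$, separates the components coming from $y$ and from $Q$, and produces at worst $cA$, $cA/2$, and terminal quotient singularities, all handled by Theorem \ref{cArsln}, Corollary \ref{cAr}, and Lemma \ref{quo}. The one genuinely recursive situation is that of three concurrent lines, the ordinary triple point of $C$: placing the point of concurrence at a coordinate vertex makes $\varphi_3 = g(y,z)$ a binary cubic with three distinct roots, so that $y^2 z \in \varphi_3$ after a linear change and the point is again $cD$-like; the corresponding weighted blowup (again of type $(2,2,1,1)$) reproduces a $cD$-like point $Q_3$ whose invariant $\tau$ has dropped strictly, so that the induction on $\tau$ applies exactly as in Subcases 1-2 and 2-1 of Lemma \ref{mu3}.

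The main obstacle is precisely this bookkeeping in the reducible case. One must choose the weight so that the exceptional divisor stays irreducible, making $\wBl_v$ a genuine divisorial contraction of discrepancy $1$ by Theorem \ref{terminal}; so that the only recursion is into a $cD$-like point of strictly smaller $\tau$; and so that the vertex and the other index-two points are confirmed to be terminal quotient or $cA/2$ points. The remaining work is the routine chart-by-chart verification of the singularity types, which proceeds exactly as in the proof of Lemma \ref{mu3}.
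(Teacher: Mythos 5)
Your reorganization (casing on the factorization type of the reduced plane cubic $C=(\varphi_3=0)$ instead of the paper's normalization $\varphi_3=y^2z+\lambda yu^2+f_3(z,u)$ with $z^3\in f_3$) is legitimate in principle, and the discrepancy computation and the identification of the vertex $Q_1$ as a $\frac{1}{2}(1,1,1)$ point are fine. But there is a genuine gap: both of your claims that the irreducible case ``closes with no recursion into a $cD$ point'' and that the line-plus-conic case ``produces at worst $cA$, $cA/2$, and terminal quotient singularities'' are false, and the suppressed recursions are exactly where the work lies. Take the irreducible cuspidal cubic, e.g.\ $\varphi=x^2+y^2z-u^3+z^N$. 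The curve $C$ has an $A_2$ point at $(0:1:0)$, so the cone $E$ is singular along the whole line over it (in particular $E$ is not Du Val on that chart, and Lemma \ref{key}/Theorem \ref{terminal} alone do not give you that $\Sing(Y)$ is isolated --- you need $\varphi_{\ge 4}$ for that). The $z$-chart of the $(2,1,1,1)$-blowup gives $x^2z+y^2-u^3+z^{N-3}$ at $Q_3$: again a $cD_4$ point with an irreducible cuspidal $3$-jet. So the irreducible case does recurse into $cD_4$, Lemma \ref{cA} does not apply (the quadratic part has rank one there), and you have supplied no decreasing invariant. The paper handles precisely this through the normal form and Subcase 1-2 of Lemma \ref{mu3}, where $\tau$ drops by $2$ after a coordinate change.

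The same problem occurs in your reducible case: for $\varphi=x^2+y(yu+z^2)+u^N$ (line plus tangent conic), the origin $Q_4$ of the $u$-chart of your $(2,2,1,1)$-blowup has equation $x^2+y^2u+yz^2+u^{N-4}$, i.e.\ the identical $cD_4$ $3$-jet with the $u$-order dropped by $4$ --- not a $cA$, $cA/2$ or quotient point. This is exactly why the paper's Case 2 isolates $Q_4$, extracts a monomial $yu^p$, $zu^p$ or $u^p$ from the isolatedness of $P\in X$, and inducts on $p$. To repair the argument you must, in each of your cases, examine the chart origins that reproduce a $cD$-like point and exhibit a strictly decreasing invariant ($\tau$ or the $u$-order $p$), as in Lemma \ref{mu3} and the paper's Case 2; the three-concurrent-lines configuration is not ``the one genuinely recursive situation.''
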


\begin{proof} We have $y^2z, z^3 \in \varphi_3$.
Replacing $z$ by $z+u$ and completing square, we may and do assume that
$\varphi_3=y^2z + \lambda yu^2 + f_3(z,u)$,
with $z^3 \in f_3$ and
$$ \varphi = x^2 + y^2z + \lambda yu^2 + f_3(z,u) + \varphi_{\ge 4}(y,z,u).$$

\noindent
{\bf Case 1.} $ \lambda =0$ and $\varphi_3$ is irreducible.\\
We can work as in Subcase 1-1 and 1-2 of Lemma \ref{mu3}.

\noindent
{\bf Case 2.} $\lambda =0$ and $\varphi_3$ is reducible.\\
  In this situation, $y^2z+f_3 = z(y^2+q(z,u))$ for some quadratic form $q(z,u) \ne 0$.
  We consider weighted blowup $Y \to X$  with weights $v=(2,1,2,1)$.
Now $$E =(\bfx^2+  \bfy^2 \bfz+\text{possibly others}=0) \subset
\bP(2,2,1,1),$$ is clearly irreducible. By considering $Y \cap
U_2$, which is nonsingular by Lemma \ref{nonsing}, one has that $Y \to X$ is a divisorial
contraction by Theorem \ref{terminal}.

Since $z^3 \in \varphi_3$, one sees that $Y \cap U_3$
define by $$(\tilde{\varphi}: x^2+y^2+z^2+\text{others}=0) \subset \bC^4/\frac{1}{2}(2,1,1,1),$$
which has at worst $cA/2$ singularities.

It remains to consider $Q_4$. Since $P \in X$ is isolated, one sees that there exists  $yu^p, zu^p$ or $u^p \in \varphi$ for some $p$.
It follows that there exists $yu^{p-1}, zu^{p-2}$ or $ u^{p-3} \in \tilde{\varphi}$ in  $Y \cap U_4$.
Hence feasible resolution exists by induction on $p$.

\noindent
{\bf Case 3.}   $\lambda \ne 0$.
We can work as in Subcase 2-2 and 2-3 of Lemma \ref{mu3}. Note that $z^3 \in f_3$ hence Subcase 2-1 can not happen.
\end{proof}

\begin{lem} A singularity $P \in X$ of type $cD_{n \ge 5}$ admits a normal form with $l \ge 3$.
\end{lem}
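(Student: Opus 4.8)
The plan is to start from Markushevich's presentation of a $cD_{n\ge 5}$ point, $\varphi = x^2 + y^2 z + \varphi_{\ge 4}(y,z,u)$ (see \cite[Proposition 1.3]{Ma}), and to bring it by explicit coordinate changes into a normal form $x^2 + y^2 z + \lambda y u^l + f(z,u)$, keeping careful track of the coefficient of $y$ so as to read off $l \ge 3$. The one feature of Markushevich's form that I will exploit throughout is that the degree-$3$ part of $\varphi$ is exactly $y^2 z$; in particular there is no cubic monomial $y u^2$, so every monomial of $\varphi$ that is linear in $y$ has total degree at least $4$.

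First I would collect $\varphi$ by powers of $y$, writing $\varphi = x^2 + \sum_{i\ge 0} a_i(z,u)\, y^i$ with $a_0$ of order $\ge 4$, $a_1$ of order $\ge 3$, and $a_2 = z + (\text{terms of order }\ge 2)$. Replacing $z$ by $a_2(z,u)$ --- a coordinate change in $(z,u)$ alone, hence one that neither mixes in $y$ nor disturbs the grading by powers of $y$ --- makes the coefficient of $y^2$ equal to $z$ while keeping the cubic part $y^2 z$. The next step, removing the terms $a_i y^i$ with $i \ge 3$ together with the $(z,u)$-dependence of the $y^2$-coefficient, is the part I expect to be the \emph{main obstacle}: since the $y$-direction Hessian $2a_2 = 2z$ is a nonzerodivisor but not a unit in $\bC[[z,u]]$, one cannot split off $y$ by a single application of the Morse/splitting lemma, and the reduction must instead be performed by a sequence of substitutions converging in the $\mathfrak m$-adic topology of $\bC[[x,y,z,u]]$, each raising the order of the offending monomials. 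This is exactly the classical construction of a $cD$ normal form; the point to check is only that every substitution used alters $y$ by terms of order $\ge 2$ and alters $z,u$ among themselves, so that the cubic part remains $y^2 z$.

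Once $\varphi = x^2 + z y^2 + a_1(z,u)\, y + a_0(z,u)$, I would complete the square in $y$. Writing $a_1 = z\,\tilde a(z,u) + \beta(u)$ with $\beta(u) := a_1(0,u)$ and $\tilde a := (a_1-\beta)/z \in \bC[[z,u]]$, the substitution $y \mapsto y + \tilde a/2$ removes the $z$-divisible part of the $y$-linear coefficient and yields $\varphi = x^2 + z y^2 + \beta(u)\, y + f(z,u)$. Here is where the new content appears: because $a_1$ has order $\ge 3$ (there being no cubic $y u^2$ term), its restriction $\beta(u)=a_1(0,u)$ has order $\ge 3$ in $u$. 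If $\beta \equiv 0$ I set $\lambda = 0$; otherwise $\beta(u) = \lambda u^l \cdot (\text{unit})$ with $l \ge 3$, and the change $u \mapsto u\cdot(\text{unit})^{1/l}$ normalizes it to $\lambda u^l$. This gives the desired normal form with $l \ge 3$, and conditions (2), (3) of the definition hold because $P\in X$ is isolated, which forces $f$ to contain a pure power of $u$ and a term $z^q$ or $z^{q-1}u$.

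As a consistency check --- and an alternative route that avoids the $\mathfrak m$-adic elimination --- one can argue directly that $l = 2$ is impossible for a $cD_{n\ge5}$ point: if $\lambda \ne 0$ and $l=2$, then for $P$ to be $cD_{n\ge 5}$ the cubic part $y^2 z + \lambda y u^2 + f_3(z,u)$ would have to be divisible by the square of a linear form, i.e. $y^2 z + \lambda y u^2 + f_3 = \ell^2 m$ for linear $\ell,m$. To produce the term $y^2z$ the form $\ell$ must involve $y$, and then vanishing of the $y^3$-coefficient and matching of the $y^2$-coefficient force $m=z$; but every $y$-linear monomial of $\ell^2 z$ is then divisible by $z$, contradicting the presence of $\lambda y u^2$. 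Hence the cubic has no repeated linear factor and $P$ would be $cD_4$, a contradiction. Either way, $l \ge 3$ as claimed.
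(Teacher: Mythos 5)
Your proposal is correct and follows essentially the same route as the paper: starting from Markushevich's presentation $x^2+y^2z+\varphi_{\ge 4}(y,z,u)$, one solves for a formal coordinate change putting $\varphi$ into the normal form, and $l\ge 3$ drops out because every monomial linear in $y$ has total degree $\ge 4$, so any $yu^k$ term has $k\ge 3$ (the paper phrases this as $l=\min\{k\mid yu^k\in g_{\ge 4}\}$). The only point you omit is the appeal to Artin's approximation theorem to realize the formal coordinate change as an actual embedding, which the paper cites explicitly; your direct verification that $l=2$ would force the cubic part to violate the $cD_{n\ge 5}$ criterion is a nice independent cross-check not present in the paper.
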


\begin{proof}
It is straightforward to solve for formal power series $
\bar{y}=y+y_2+y_3+\ldots$ and $\bar{z}=z+z_2+z_3+\ldots$
satisfying
$$x^2+\bar{y}^2 \bar{z} + \lambda \bar{y} u^l + f(\bar{z},u) =  x^2+y^2z+g_{\ge 4}(y,z,u),$$
where $y_k=y_k(z,u)$ and $z_k=z_k(y,z,u)$ are the $k$-th jets and
$l=\min \{k| yu^k \in g_{\ge 4}(y,z,u)\}$. By Artin's
Approximation Theorem \cite{Ar}, this gives an embedding as
desired.

Observe that $\varphi=0$ is
singular along the line $(x=y=z=0)$ if $z^2|f(z,u)$.   Similarly,
$\varphi=0$ is singular along the line $(x=y=u=0)$ if
$u^2|f(z,u)$.
Since $P \in X$ is isolated, it follows that $z^{q-1}u \in \varphi$ or $z^q \in \varphi$ for some $q>0$ and $zu^{p-1} \in \varphi$ or $u^p \in \varphi$ for some $p>0$ if $\lambda = 0$.
\end{proof}

In order to obtain a feasible resolution for $cD_{n \ge 5}$ points in general, we will need to consider
$cAx/2$ point as well.  Given a $cAx/2$ point $P \in X$, with an
embedding
$$(\varphi \colon x^2+y^2+f(z,u)=0) \subset \bC^4/\frac{1}{2}(1,0,1,1),$$ we
define
$$ \tau(P \in X):= \min \{
i+j | z^iu^j \in f\}.$$ Note that $f(z,u)$ is $\bZ_2$-invariant and hence consists of even degree
terms only. We set $\tau':=\tau/2 \in \bZ$.

For inductive purpose, we start by considering  points with $\tau$ small.

\begin{lem}
Given a $cAx/2$-like point $P$ defined by
$$(\varphi \colon x^2+y^2+f(z,u)=0) \subset
\bC^4/\frac{1}{2}(1,0,1,1),$$ with $\tau \le 2$. Suppose that $P$
is terminal. Then $P$ is non-singular  or $cA/2$. In any case,
feasible resolutions exist for such points.
\end{lem}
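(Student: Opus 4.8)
The plan is to show that the hypothesis $\tau\le 2$ forces a genuine hyperbolic (node) direction among the anti-invariant coordinates, which collapses the \emph{a priori} $cAx/2$-shape into a $cA/2$-singularity; the latter is already resolved by Corollary \ref{cAr}.

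First I would record the numerics. Since $P$ is terminal it is an isolated singularity, so $f\neq 0$; in particular $f$ contains a pure power $u^{2p}$ together with a term $z^{2q}$ or $z^{2q-1}u$, which also shows $P$ is a normal $3$-fold. As $f$ consists of even-degree terms only and $\tau\le 2$, the only possibility is $\tau=2$, so the quadratic part $q(z,u)$ of $f$ is nonzero.

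The key observation is the weight bookkeeping: under $\frac12(1,0,1,1)$ the three coordinates $x,z,u$ all have odd weight $1$, while $y$ has even weight $0$. Hence any element of $GL_3$ acting on $(x,z,u)$ commutes with the $\bZ_2$-action, i.e.\ is equivariant. The quadratic form $x^2+q(z,u)$ in these three odd variables has rank $1+\rank(q)\ge 2$, so after such an equivariant linear change I may split off a hyperbolic pair and assume $st\in\varphi$ for two odd coordinates $s,t$ (concretely $s=x+iz,\ t=x-iz$ when $\rank q=1$, and $s=z,\ t=u$ once $q$ contains a $zu$ term). Applying the splitting lemma to the nondegenerate summand $st$, all higher-order terms involving $s$ or $t$ can be removed, equivariantly since every substitution respects the weight grading, leaving $\varphi=st+g(y,w)$, where $w$ is the remaining odd coordinate and $g$ is $\bZ_2$-invariant, hence even in $w$, i.e.\ $g=g(y,w^2)$.

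This is exactly the $cA/2$ normal form: relabelling $s,t$ as the paired odd variables, $w$ as the weight-$1$ variable and $y$ as the invariant variable, $\varphi=st+g(y,w^2)$ matches the presentation of Corollary \ref{cAr} with $r=2$. Since $st\in\varphi$ and $P$ is isolated, Lemma \ref{cA} identifies the underlying hypersurface as $cA$, so the terminal quotient $P$ is a $cA/2$ point (the alternative that $g$ acquires a linear term, which would make $P$ non-singular, is immediately feasibly resolved, and in any case does not occur here as $\varphi$ carries no linear term). A feasible resolution then exists by Corollary \ref{cAr}. The only delicate point is the bookkeeping that every coordinate change used---diagonalisation and the splitting lemma---can be carried out within the odd-weight variables so as to stay equivariant and land precisely in the $cA/2$ form rather than the genuine $cAx/2$ shape; that equivariance is exactly what distinguishes the base case $\tau\le2$ from the inductive $cAx/2$ case with $\tau>2$.
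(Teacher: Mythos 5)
Your argument is correct and is essentially the paper's own (much terser) proof spelled out in full: the paper likewise reduces $\tau=2$ to arranging $z^2\in f$, so that $x^2+z^2$ supplies a hyperbolic pair among the weight-$1$ coordinates and the point becomes $cA/2$, handled by Corollary \ref{cAr}. Your extra care about equivariance of the diagonalisation and splitting lemma is exactly the content the paper leaves implicit, so no substantive difference.
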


\begin{proof}
If $\tau =0$, then $P$ is clearly non-singular. If $\tau=2$, then
we may assume that $z^2 \in f(z,u)$. Hence it is a $cA/2$ point.
By Corollary \ref{cAr}, feasible resolution exists.
\end{proof}

We are now ready to handle  $cAx/2$ and $cD$ points.

\begin{prop} Given a cAx/2 point $P \in X$ with $\tau( P\in
X)=\tau_0 \ge 4$. Suppose that feasible resolutions exist for  $cD$-like point
with $\mu^\flat < \tau_0$ and feasible resolutions exist for $cAx/2$-like
point with $\tau < \tau_0$. Then there is a feasible resolution for $P
\in X$.
\end{prop}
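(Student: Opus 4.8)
The plan is to exhibit a single weighted blowup $\wBl_v\colon Y\to X$ of discrepancy $1/2$ and then to check that every point of $\Sing(Y)$ lies in a class for which a feasible resolution is already available: a terminal quotient singularity (Lemma \ref{quo}), a $cA$ or $cA/2$ point (Theorem \ref{cArsln}, Corollary \ref{cAr}), a $cAx/2$-like point with $\tau<\tau_0$, or a $cD$-like point with $\mu^\flat<\tau_0$. The last two are covered by the inductive hypotheses, so splicing the resolutions over $Y$ yields a feasible resolution of $P\in X$. Throughout I write $f=f_{\tau_0}+f_{>\tau_0}$ with $f_{\tau_0}$ the (even) leading binary form in $(z,u)$, and set $\tau'=\tau_0/2$. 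The first thing to observe is the parity obstruction from $v_0=\frac12(1,0,1,1)$: any weight $v=\frac12(a_1,a_2,a_3,a_4)\in\overline N$ realizing the minimal discrepancy must have $a_1,a_3,a_4$ odd and $a_2$ even, so $wt_v(x^2)\neq wt_v(y^2)$ always. Hence the weighted tangent cone contains at most one of $x^2,y^2$, and the natural choice is to pair it with $f_{\tau_0}$: take $v=\frac12(\tau'+1,\tau',1,1)$ if $\tau'$ is even and $v=\frac12(\tau',\tau'+1,1,1)$ if $\tau'$ is odd. In either case $wt_v(\varphi)=\tau'$ and $\frac1r\sum a_i-wt_v(\varphi)-1=\frac12$, so once $E$ is irreducible Theorem \ref{terminal} makes $\wBl_v$ a divisorial contraction with minimal discrepancy.

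\emph{Main case: $f_{\tau_0}$ is not a perfect square.} Then the leading form $y^2+f_{\tau_0}$ (or $x^2+f_{\tau_0}$) is irreducible, hence $E$ is irreducible. I would then run through the charts. The index $>1$ points are the vertices $Q_1,Q_2$ coming from $\Sing(\bP(v))$, which are terminal cyclic quotient singularities resolved by Lemma \ref{quo}; on the index-$1$ charts $U_3,U_4$ the chart equations acquire a factor forcing the residual points to be of type $cA$, lower $cAx/2$, or $cD$-like, using Lemmas \ref{cA} and \ref{cDq}. The essential bookkeeping is that at each surviving point the governing invariant ($\tau$ for a residual $cAx/2$ point, $\mu^\flat$ for a $cD$-like point) is strictly below $\tau_0$: this holds because $f_{\tau_0}$ is absorbed into the leading form of $E$, so the pure $(z,u)$-part of each chart equation has strictly larger order. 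One must also verify terminality of $Y\cap U_4$ directly through Theorem \ref{terminal}, since $x$ is absent from the tangent cone and $\Sing(E)$ need not be isolated on $U_4$; it is the $u$-dependence of $\tilde\varphi$ that restores isolatedness of $\Sing(Y)$.

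\emph{Crux case, and the main obstacle: $f_{\tau_0}=c\,h(z,u)^2$ is a perfect square.} Now $y^2+f_{\tau_0}$ (resp. $x^2+f_{\tau_0}$) factors, $E$ is reducible, and Theorem \ref{terminal} fails for the naive weight. The remedy I would use is to complete the square \emph{equivariantly} first: set $\bar y=y-\sqrt{-c}\,h$ when $\deg h=\tau'$ is even and $\bar x=x-\sqrt{-c}\,h$ when $\tau'$ is odd, each being a genuine semi-invariant coordinate change. This trades $f_{\tau_0}$ for a cross term $\bar y\,h(z,u)$ (resp. $\bar x\,h$) of $cD$ shape while pushing the pure $(z,u)$-part into $f_{>\tau_0}$. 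A weight adapted to this cross term (balancing $x^2$, the cross term, and the lowest surviving $u$-power) then has an irreducible leading form of shape $x^2+(\text{cross term})+(\text{$(z,u)$-monomial})$, and the blowup produces $cD$-like points with $\mu^\flat<\tau_0$ on the index-$1$ charts, alongside quotient and $cA$ points. The delicate point I expect to be the genuine obstacle is to confirm that no point of $Y$ escapes the level-$4$ hierarchy, that is, that the higher-index vertex is a terminal quotient (or at worst a $cA/r$) point rather than a $cAx/4$ or $cD/2$ point; this pins down the exact choice of $v$ and requires reading off the induced cyclic action in the refined lattice $\overline N$ rather than from the naive weight integers.

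Granting these chart computations, every component of $\Sing(Y)$ is a terminal quotient singularity, a $cA$ or $cA/2$ point, a $cAx/2$-like point with $\tau<\tau_0$, or a $cD$-like point with $\mu^\flat<\tau_0$. Applying Lemma \ref{quo}, Theorem \ref{cArsln}, Corollary \ref{cAr} to the first three, and the two inductive hypotheses to the last two, gives a feasible resolution over each point of $Y$, hence over $Y$, and therefore over $P\in X$.
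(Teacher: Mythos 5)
Your proposal is correct and follows essentially the same route as the paper: the same dichotomy on whether $f_{\tau_0}$ is a perfect square, the same weights $\frac12(\tau'+1,\tau',1,1)$ (up to the parity swap) in the non-square case, and the same equivariant completion of the square followed by the incremented weight $\frac12(\tau'+2,\tau'+1,1,1)$ in the square case, landing in both cases on terminal quotient points at the vertices and $cD$-like points with $\mu^\flat<\tau_0$ on the index-one charts. The "delicate point" you flag about the higher-index vertex is resolved in the paper exactly as you anticipate: it is a terminal cyclic quotient singularity of index $\tau_0/2+1$ (resp.\ $\tau_0/2+2$).
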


\begin{proof}Let $f_{\tau_0}(z,u)=\sum_{i+j=\tau_0} a_{ij} z^iu^j$. It can be factored
into $ \prod_{t \in T} (\alpha_t z + \beta_t u)^{m_t}$, where $m_t$ denotes
the multiplicities with $\sum_t m_t= \tau_0$.

\noindent
{\bf Case 1.} $f_{\tau_0}(z,u)$ is not a perfect square.\\
 Depending on the parity
of $\tau_0/2$, we first consider  $\wBl_v: Y \to
X$ with weights
$v=\frac{1}{2}(\frac{\tau_0}{2}, \frac{\tau_0}{2}+1,1,1)$ or
$\frac{1}{2}(\frac{\tau_0}{2}+1, \frac{\tau_0}{2},1,1)$. It is
a divisorial contraction with minimal discrepancy $\frac{1}{2}$ (cf.
\cite[Theorem 8.4]{HaI}). Without loss of generality, we study the first case.

Now $E=(\bfx^2+
f_{\tau_0}(\bfz,\bfu)=0) \subset
\bP(\frac{\tau_0}{2}, \frac{\tau_0}{2}+1, 1,1)$.
 Easy computation yields the following:
\begin{enumerate}
\item  $Y \cap U_1$ is non-singular and
$Y \cap U_2$ is singular only at $Q_2$, which is
a terminal quotient  singularity of index $\frac{\tau_0}{2}+1$.

\item $\Sing(Y)_{\ind=1} \cap U_3  \subset \{ R_t\}_{ m_t \ge 2,
(\alpha_t,\beta_t) \ne (1,0)}$. Each $R_t$ is defined by
$x^2+y^2z+unit\cdot \bar{u}^{m_t}+\tilde{f}_{\ge \tau_0}(z,
\bar{u})=0 \subset \bC^4$,  where $\bar{u}:=u+\alpha_t$.  This
is a  $cD$ point with $\mu^\flat(R_t) \le m_t $.

\item Similarly, $\Sing(Y)_{\ind=1} \cap U_4  \subset \{ R_t\}_{
m_t \ge 2, (\alpha_t,\beta_t) \ne (0,1)}$. Each $R_t$ is defined
by $x^2+y^2u+unit\cdot \bar{z}^{m_t}+\tilde{f}_{\ge
\tau_0}(\bar{z},u)=0 \subset \bC^4$, where
$\bar{z}:=z+\beta_t/\alpha_t$.  This is a $cD$ point with
$\mu^\flat(R_t) \le m_t$.

\end{enumerate}

As a summary, one sees that $\Sing(Y) \subset \{Q_2, R_t\}_{m_t
\ge 2}$. Notice  $|T|=1$ would implies that $f_{\tau_0}$ is a
perfect square, which is a contradiction. Hence we may assume that
$|T|>1$ and therefore $m_t < \tau_0$ for all $t$. We can take a
feasible resolution for each $R_t$ and $Q_2$ to obtain the
required feasible resolution for $P \in X$.

\noindent
{\bf Case 2.} $f_{\tau_0}(z,u)=(h_{\tau_0/2}(z,u))^2$ is a perfect square.\\
We need to make a coordinate change so that $P \in X$ is rewritten
as $$(x^2+2x h_{\tau_0/2}(z,u) + y^2 +
f_{\tau_0+1}(z,u)+f_{>\tau_0+1}(z,u)=0) \subset \bC^4.$$
 Depending on the parity
of $\tau_0$,  we consider $\wBl_v: Y_1 \to X$
 with weights
$v=\frac{1}{2}(\frac{\tau_0}{2}+2, \frac{\tau_0}{2}+1,1,1)$ or
$\frac{1}{2}(\frac{\tau_0}{2}+1, \frac{\tau_0}{2}+2,1,1)$. Without
loss of generality, we study the first case.
 Now $E =(\bfy^2+ 2 \bfx h_{\tau_0/2}+f_{\tau_0+1}(\bfz,\bfu)=0)\subset
\bP(\frac{\tau_0}{2}+2, \frac{\tau_0}{2}+1, 1,1)$.

 Easy computation yields the following:
\begin{enumerate}
\item $Y \cap U_2$ is non-singular and $Y_1 \cap U_1$ is singular only at $Q_1$, which is a terminal quotient
singularity of index $\frac{\tau_0}{2}+2$.

\item $Y \cap U_3$ is defined by
$$(\tilde{\varphi}: x^2z+2x h_{\tau_0/2}(1,u) + y^2 +
\tilde{f}_{\ge \tau_0+1}=0) \subset \bC^4.$$

For any singularity $R \in \Sing(Y)_{\ind=1} \cap U_3$, we write $R=(0,0,\alpha,\beta)$ and consider that coordinate change that $\bar{x}:=y, \bar{y}:=x, \bar{z}:=z-\alpha, \bar{u}:=u-\beta$.  Then $R$ is at worst a
 $cD$-like point and $\mu^\flat (R) \le  {\tau_0} -2$.

\item The same holds  for singularity in  $\Sing(Y)_{\ind=1} \cap U_4$.
\end{enumerate}

As a summary, one sees that $\Sing(Y)$ consist of a terminal quotient
singularity $Q_1$ and possibly some  $cD$-like points $R_t$ with
$\mu^\flat(R_t)< \tau_0$. We can take a feasible resolution for each $R_t$
and $Q_1$ to obtain the required feasible resolution for $P \in X$.
\end{proof}

\begin{prop} Given a  $cD$-like point with $\mu^\flat(P \in X)= \mu_0$.
Suppose that feasible resolutions exist for $cAx/2$-like point with $
\tau \le \mu_0$ and feasible resolutions exist for $cD$-like point
with $\mu^\flat < \mu_0$. Then there is a feasible resolution  for $P
\in X$.
\end{prop}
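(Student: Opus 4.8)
The plan is to run the same weighted-blowup analysis as in Lemma~\ref{mu3} and in the preceding Proposition, organized as a single induction that interleaves the $cD$-like and $cAx/2$-like cases. By Lemma~\ref{mu3} I may assume $\mu_0=\mu^\flat(P\in X)\ge 4$, and by the hypotheses every terminal quotient singularity (Lemma~\ref{quo}), every $cA$ or $cA/r$ point (Theorem~\ref{cArsln}, Corollary~\ref{cAr}), every $cAx/2$-like point with $\tau\le\mu_0$, and every $cD$-like point with $\mu^\flat<\mu_0$ already admits a feasible resolution. Thus it suffices to produce one divisorial contraction $f=\wBl_v\colon Y\to X$ of minimal discrepancy for which $\Sing(Y)$ consists only of points in these classes.

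Writing the normal form $\varphi=x^2+y^2z+\lambda yu^l+f(z,u)$ and recalling $\mu^\flat=\min\{\mu,2l-2\}$, I would split into the two regimes according to which terms realize the weight $\mu_0$. In the regime $\mu_0=2l-2$ (so $\lambda yu^l$ enters the tangent cone) I take $v=(l-1,l-2,2,1)$, and in the regime $\mu_0=\mu<2l-2$ (the pure part $f$ dominates) I take $v=(\tfrac{\mu_0}{2},\tfrac{\mu_0}{2}-1,2,1)$ when $\mu_0$ is even. In either case $wt_v(x^2)=wt_v(y^2z)=\mu_0=wt_v(\varphi)$ and $\sum a_i=\mu_0+2$, so the discrepancy is $1$ and the contraction property follows from Theorem~\ref{terminal} once $E$ is checked to be irreducible.

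The chart analysis should then localize the singularities. On $U_1$ we have $wt_v(x^2)=wt_v(\varphi)$, so $\Sing(Y)\cap U_1=\emptyset$ by Lemma~\ref{u1}; on $U_2$ the monomial $y^2z$ lets Lemma~\ref{nonsing} reduce $\Sing(Y)\cap U_2$ to at most the terminal quotient point $Q_2$; on the index-two chart $U_3$ the blown-up equation takes the shape $x^2+y^2+\lambda yu^l+\tilde f(z,u)$, and after completing the square in $y$ (the correction $\tfrac{\lambda}{2}u^l$ being a legitimate semi-invariant) this becomes a $cAx/2$-like equation whose invariant I expect to satisfy $\tau\le\mu_0$; finally on $U_4$ the term $\lambda y$ (when $\lambda\neq0$), or else the lowest pure term of $f$, appears with strictly smaller weight, so $Y\cap U_4$ is either smooth, or carries $cA$ points and $cD$-like points for which one degree has been peeled off, forcing $\mu^\flat<\mu_0$. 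Feeding these into the inductive hypotheses yields the feasible resolution.

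The main obstacle is the degenerate sub-cases, exactly as in the earlier results. When the relevant leading form is a perfect power or a single factor, so that $E$ is too degenerate and the naive blowup fails to strictly decrease $\mu^\flat$ on the problematic chart, one must first normalize by a coordinate change of the type $\bar y=y+\tfrac{\lambda}{2}\sum_{j\ge1}\binom{l}{j}\alpha^jz^{j-1}\bar u^{l-j}$ used in Lemma~\ref{mu3}, which replaces an $f_3$-type degeneracy by $u^3$ and strictly lowers $\tau$, and then run a secondary induction on $\tau$ (or on $\mu^\sharp,\tau^\sharp$). A second delicate point is parity: when $\mu_0=\mu$ is odd, no integral weight with $z\mapsto2,\,u\mapsto1$ achieves discrepancy $1$, so this case needs a modified weight vector and a separate verification. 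Throughout one must also confirm that the surface $E$, being of the form $x^2+y^2z+(\cdots)$, acquires only $A$- and $D$-type singularities, so that Theorem~\ref{terminal} yields only $cA$ and $cD$ points and never a $cE$ point lying outside the hypotheses; and that the $cAx/2$-like points produced on $U_3$ genuinely satisfy $\tau\le\mu_0$ rather than $\tau>\mu_0$. This last inequality is precisely why the hypothesis is stated with $\tau\le\mu_0$, and why the $cD$-like and $cAx/2$-like inductions must be carried out simultaneously.
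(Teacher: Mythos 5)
Your overall strategy (the weight $(\mu',\mu'-1,2,1)$ with $\mu'=\lfloor\mu_0/2\rfloor$, the chart-by-chart localization, and the interleaved $cD$/$cAx/2$ induction) is the paper's, but two of the steps you wave at are exactly where the real work lies, and as stated they fail. First, your claim that the $cD$-like point at $Q_4$ has ``one degree peeled off, forcing $\mu^\flat<\mu_0$'' is false in general: the inequality one actually gets is $\mu(Q_4)\le \mu(P)-2\mu'+2i_0\le\mu(P)$, with equality possible (e.g.\ when the term realizing $\mu(P)$ is $z^{i_0}u^{j_0}$ with $i_0=\mu'$), so the induction on $\mu^\flat$ alone does not close. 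The paper closes it by introducing the secondary invariant $\mu^\sharp$ (built only from terms $z^iu^j$ with $i\le 1$) and checking $\mu^\sharp(Q_4)\le\mu^\sharp(P)+2-2\mu'<\mu^\sharp(P)$; you mention a ``secondary induction on $\tau$ (or on $\mu^\sharp,\tau^\sharp$)'' only for the degenerate leading-form subcases, which is not where this issue arises.

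Second, your regime ``$\mu_0=2l-2$'' silently merges the cases $2l-2=\mu$ and $2l-2<\mu$. In the latter case the weight $(l-1,l-2,2,1)$ does produce a divisorial contraction, but the point on $U_3$ (after completing the square) picks up the term $\tfrac{1}{4}\lambda^2u^{2l}$ of degree $2l=\mu_0+2$ while $f$ contributes nothing of weight $2\mu'$, so the resulting $cAx/2$-like point can have $\tau>\mu_0$ and the hypothesis cannot be invoked --- this is precisely the failure mode you flag at the end as ``delicate'' but do not resolve. The paper's fix is to abandon that weight entirely when $2l-2<\mu(P)$: it blows up with weight $(l,l,1,1)$, introduces a further invariant $\rho^\sharp(P):=\min\{i+j\mid z^iu^j\in\varphi,\ j=0\text{ or }1\}$, and iterates this blowup until the configuration drops into $\mu^\flat<\mu_0$ or $2l-2=\mu$. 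Without some such device your induction does not terminate. (By contrast, the parity obstacle you raise is not a real one: with $\mu'=\lfloor\mu_0/2\rfloor$ the weight $(\mu',\mu'-1,2,1)$ still satisfies $\sum a_i-wt_v(\varphi)-1=1$ when $\mu_0$ is odd, since then $wt_v(\varphi)=2\mu'=\mu_0-1$; no modified weight is needed.)
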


\begin{proof}
We always fix a normal form once and for all. By Lemma \ref{mu3},
we may assume that $\mu_0 \ge 4$. We set $\mu':=\lfloor
\frac{\mu_0}{2} \rfloor$. We consider divisorial contraction $Y
\to X$ with weights $(\mu',\mu'-1,2,1)$. Recall that $P \in X$ is
given by
$$ (\varphi: x^2+y^2z+\lambda yu^l + \sum a_{ij}z^iu^j=0).$$
 We may write $f_{2\mu'}:=\sum_{2i+j=2\mu'} a_{ij} z^i u^j= \prod_t
(\alpha_t z+\beta_tu^2 )^{m_t}$.

\noindent
{\bf Case 1.} $\lambda=0$. \\
It is straightforward to see that only singularity on $U_1 \cup U_2$ is $Q_2$, which is a terminal quotient singularity.
On $U_3 \cup U_4$, for any singularity $R \in \Sing(Y)_{\ind=1}\subset \Sing(E)$, then $R$ correspond to a factor $(\alpha_t z+\beta_t
u^2)^{m_t}$ with $m_t \ge 2$.
We distinguishes the following three subcases.

\noindent
{\bf  1-1.} $R \ne Q_3,Q_4$.\\
By changing coordinates $\bar{z}:=z+\frac{\beta_t}{\alpha_t}$, one
sees that $R$ is a $cA$ point. Hence feasible resolution over $R$
exists.

\noindent
{\bf 1-2.} $R=Q_3$.\\
Since $Y \cap U_3$ is defined by
$$ (\tilde{\varphi}: x^2+y^2+ \sum a_{ij} z^{2i+j-2 \mu'}u^j=0) \subset \bC^4/\frac{1}{2}(1,0,1,1) .$$
One sees that $Q_3$ is a $cAx/2$-like point with
$$\tau(Q_4) = \min \{2i+j-2\mu'+j| a_{ij} \ne 0\} \le j_0 \le \mu(P)=\mu^\flat(P)=\mu_0.$$
Feasible resolution over $Q_3$ exists by our hypothesis.

\noindent
{\bf 1-3.} $R=Q_4$. \\
Since $Y \cap U_4$ is defined by
$$(\tilde{\varphi}:  x^2+y^2z + \sum a_{ij} z^i u^{2i+j-2\mu'}=0) \subset \bC^4.$$
Then $Q_4$ is  a $cD$-like point with $\lambda=0$ as well.
 Since $\mu(P)=2i_0+j_0$ for some $z^{i_0}u^{j_0} \in \varphi$, one sees that
$$ \mu(Q_4) = \min \{ 2i+j-2\mu'+2i| a_{ij} \ne 0\}  \le
 \mu(P)-2\mu' +2i_0\le \mu(P), \eqno(\dagger)$$
 where the last inequality follows from
 $ \mu'=\lfloor \frac{\mu_0}{2}\rfloor \ge \lfloor \frac{2i_0}{2}\rfloor$.

 One can easily check  that $$\begin{array}{l}\mu^\flat(Q_4)=\mu(Q_4)  \le
 \mu(P)= \mu^\flat(P); \\
  \mu^\sharp(Q_4) \le
 \mu^\sharp(P)+2-2\mu' < \mu^\sharp(P). \end{array}$$

 By inductively on $\mu^\sharp$, there exist feasible resolution for $P \in X$.







\noindent
{\bf Case 2.} $\lambda \ne 0$.\\
\noindent
{\bf Subcase 2-1.} $2l-2 = \mu(P)$.\\
We proceed as in Case 1 and see that $\Sing(Y)=\{Q_2, Q_3\}$, where $Q_2$ is a terminal quotient singularity and $Q_3 \in Y\cap U_3$ is given by
$$ (\tilde{\varphi}: x^2+y^2-\frac{1}{4}\lambda^2 u^{2l}+ \sum a_{ij} z^{2i+j-2 \mu'}u^j=0) \subset \bC^4/\frac{1}{2}(1,0,1,1),$$
after completing the square.
One sees that $Q_3$ is a $cAx/2$-like point with
$$\tau(Q_4) \le \min \{2i+j-2\mu'+j| a_{ij} \ne 0\}  \le \mu(P)=\mu^\flat(P)=\mu_0.$$
Feasible resolution over $Q_3$ exists by our hypothesis.

\noindent
{\bf Subcase 2-2.}  $2l-2 > \mu(P)$.\\
We can proceed as in Case 1 all the way to equation $\dagger$.
 Therefore, $l(Q_4)=l(P)-\mu'-1$ and  $$ \mu^\flat(Q_4)= \min \{ 2l-2\mu'-4,
\mu(Q_4)\} \le \min\{ 2l-2, \mu(P)\} = \mu^\flat(P).$$

Inductively,  we are reduced to
either $\mu^\flat < \mu_0$ or $2l-2 =\mu$. Hence feasible resolution exists.


\noindent
{\bf Subcase 2-3.} $2l-2 < \mu(P)$.\\
For any $z^i u^j \in f$, one has $2i+j > 2l-2$ and hence $i+j \ge l$. We consider $\wBl_v: Y \to X$ with $v=(l,l,1,1)$ instead.

By Lemma \ref{nonsing}, $Y\cap U_4$ is nonsingular and hence $Y \to X$ is a divisorial contraction by Theorem \ref{terminal}. One sees that
$\Sing(Y)=\{Q_2, Q_3\}$, where $Q_2$ is a terminal quotient singularity and $Q_3 \in Y\cap U_3$ is given by
$$ (\tilde{\varphi}: x^2+y^2 +\lambda yu^{l}+ \sum a_{ij} z^{i+j-2l }u^j=0) \subset \bC^4,$$
which is a $cD$-like point. For a $cD$-like point, we introduce
$$ \rho^\sharp(P \in X) := \min\{i+j| z^iu^j \in \varphi, j=0 \text{ or } 1\},$$ which is finite and $\frac{1}{2} \mu \le \rho^\sharp \le \mu$.  Compare $Q_4$ with $P$, we have
$ \rho^\sharp(Q_4)  \le  \rho^\sharp(P)+1-2l$. Repeat the process $t$-times, we have $Q_{t,4} \in Y_t \to \ldots \to Y_1=Y \to X \ni P$ such that for $t$ sufficiently large
$$ \mu(Q_{t,4}) \le 2 (\mu (P)+t (1-2l)) < \mu_0(P).$$
 Hence  we are reduced to the situation  $\mu^\flat(Q_{t,4}) \le \mu(Q_{t,4}) < \mu_0$, or Subcase 2-1, or Subcase 2-2 in finite steps and  feasible resolution over $P \in X$ exists by our hypothesis.

\end{proof}

Combining all the above results in this section, we have the
following:
\begin{thm} \label{cD}
There is a feasible resolution  for any singularity of type $cD$ or $cAx/2$.
\end{thm}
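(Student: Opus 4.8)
The plan is to bundle the lemmas and propositions of this section into a single simultaneous induction. I would first split the $cD$ case according to the type of the germ: every $cD_4$ point is already resolved by the preceding Proposition on $cD_4$ singularities, so it remains to treat $cD_{n\ge 5}$ points together with all $cAx/2$ points. For the $cD_{n\ge 5}$ points I would invoke the normal-form Lemma, which exhibits any such germ as a $cD$-like point (with $l\ge 3$). In this way the theorem is reduced to the single claim that \emph{feasible resolutions exist for every $cD$-like point and every $cAx/2$-like point}, the broader classes for which the later propositions are stated.

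To organize the induction, attach to a $cD$-like point its invariant $\mu^\flat$ and to a $cAx/2$-like point its invariant $\tau$; both are finite nonnegative integers because the singularity is isolated, and $\tau$ is always even. Writing $(D_n)$ for the assertion ``feasible resolution exists for every $cD$-like point with $\mu^\flat\le n$'' and $(A_n)$ for ``feasible resolution exists for every $cAx/2$-like point with $\tau\le n$'', I would establish all of these in the interleaved order
$$ (A_0),\ (D_0),\ (A_1),\ (D_1),\ (A_2),\ (D_2),\ \dots $$
The base cases come verbatim from earlier results: Lemma \ref{mu3} gives $(D_n)$ for $n\le 3$, and the $cAx/2$ Lemma for $\tau\le 2$ gives $(A_0)$ and $(A_2)$. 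The quotient, $cA$ and $cA/r$ resolutions of Lemma \ref{quo}, Theorem \ref{cArsln} and Corollary \ref{cAr} dispose of the auxiliary singularities that the divisorial contractions throw off along the way.

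For the inductive step I would feed the two key propositions into this ordering. To obtain $(A_{\tau_0})$ with $\tau_0\ge 4$, apply the $cAx/2$ Proposition; its hypotheses require feasible resolution of $cD$-like points with $\mu^\flat<\tau_0$, i.e.\ $(D_{\tau_0-1})$, and of $cAx/2$-like points with $\tau<\tau_0$, i.e.\ $(A_{\tau_0-2})$, both of which precede $(A_{\tau_0})$ in the chosen chain. To obtain $(D_{\mu_0})$ with $\mu_0\ge 4$, apply the $cD$-like Proposition; its hypotheses require feasible resolution of $cAx/2$-like points with $\tau\le\mu_0$, i.e.\ $(A_{\mu_0})$, and of $cD$-like points with $\mu^\flat<\mu_0$, i.e.\ $(D_{\mu_0-1})$, again both already available. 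Each proposition encapsulates its own finer inductions (on $\tau$, on $p$, and on $\mu^\sharp$ or $\rho^\sharp$), so for the synthesis I may treat them as black boxes; every stage of the chain is then justified by strictly earlier stages, and the induction closes.

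The one genuinely delicate point, which I would check most carefully, is that this mutual recursion between the two families is well-founded rather than circular. The hazard is already visible in the $cD$-like Proposition, where a $cD$-like point of level $\mu_0$ can spawn a $cAx/2$-like point of the \emph{same} level $\tau=\mu_0$ (subcase $1$-$2$); symmetrically one must preclude a $cAx/2$ point of level $\tau_0$ spawning a $cD$-like point of the same level. The asymmetry that rescues the argument is that the $cAx/2$ Proposition only ever returns $cD$-like points of \emph{strictly} smaller invariant: in its two cases the new $cD$ points satisfy $\mu^\flat(R_t)\le m_t<\tau_0$ and $\mu^\flat(R)\le\tau_0-2$. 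Confirming these strict drops is exactly what licenses placing $(A_n)$ immediately before $(D_n)$ in the chain and prevents a genuine cycle; once they are verified, the interleaved induction above delivers feasible resolutions for all $cD$ and $cAx/2$ points.
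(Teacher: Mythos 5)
Your proposal is correct and matches the paper's intent exactly: the paper proves Theorem \ref{cD} simply by ``combining all the above results in this section,'' and your interleaved chain $(A_0),(D_0),(A_1),(D_1),\dots$ is precisely the well-founded ordering implicit in the hypotheses of the two propositions (the $cD$-like proposition requires $cAx/2$ up to the \emph{same} level $\tau\le\mu_0$, while the $cAx/2$ proposition requires $cD$-like only \emph{strictly} below $\tau_0$, which is the asymmetry you correctly identify as breaking any circularity). You have merely made explicit the bookkeeping the paper leaves to the reader; no gap.
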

\section{resolution of $cAx/4, cD/2, cD/3$  points}
In \cite{HaII}, Hayakawa shows that there is a partial resolution
$$ X_n \to ... \to X_1 \to X \ni P,$$
for a point $P \in X$ of index $r >1$ such that $X_n$ has only
terminal singularities of index $1$ and each map is a divisorial
contraction with minimal discrepancies.  If $\Sing(X_n)_{\ind=1}$ is
either of type $cA$ or $cD$, then feasible resolution exists by
the result of previous sections.

In fact, the partial resolution was constructed by picking any
divisorial contraction with minimal discrepancy at each step.
Therefore, for our purpose, it suffices to pick one divisorial
contraction $Y \to X$ over a given higher index point $P \in X$ of
type $cAx/4$, $cD/2$, $cD/3$, or $cE/2$ and verify that
$\Sing(Y)_{\ind=1}$ is either of type $cA$ or $cD$.

\begin{lem} \label{cAx4}
Given $P \in X$ of type $cAx/4$, there is a divisorial contraction
$Y \to X$ with discrepancy $\frac{1}{4}$ such that $\Sing(Y)_{\ind=1}$ is of type $cA$ or $cD$.
\end{lem}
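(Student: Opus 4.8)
The plan is to exhibit a single weighted blow-up of discrepancy $\tfrac14$ over the $cAx/4$ germ and then to read off $\Sing(Y)_{\ind=1}$ chart by chart; the points of index $>1$ on $Y$ need no attention here, since Hayakawa's partial resolution (Theorem \ref{Haya}) continues to resolve them. First I would fix the standard normal form
$$ (\varphi\colon x^2+y^2+f(z,u)=0)\subset \bC^4/\tfrac14(1,3,1,2),$$
where $f$ is semi-invariant of weight $\equiv 2\pmod 4$, so that each monomial $z^iu^j$ of $f$ obeys $i+2j\equiv 2\pmod 4$; the lowest admissible terms are $z^2$ and $u^3$. Completing the square in $x$ removes the admissible cross term $xz$. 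When $z^2\in f$ the quadratic part has rank three, and rotating the two weight-one variables $x,z$ turns $x^2+cz^2$ into a product $\xi\eta$, rewriting $\varphi$ in the shape $\xi\eta+y^2+\cdots$; this is the device that will feed Corollary \ref{cAq}.

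Next I would choose the weight $v=\tfrac14(a_1,a_2,a_3,a_4)\in\overline N$ (so $(a_i)\equiv k(1,3,1,2)\pmod 4$) for which the $v$-weight of the leading square $x^2$ (or $\xi\eta$) coincides with the $v$-weight of the leading monomial of $f$; this makes $\varphi_h$ a reduced irreducible form of type $x^2+(\text{leading }f)$, and for the choice dictated by where $f$ begins it yields a divisorial contraction $Y\to X$ of minimal discrepancy $\tfrac14$ by Hayakawa's classification \cite{HaI,HaII} (or, when it applies, by a direct check in the spirit of Theorem \ref{terminal}). By Lemma \ref{key} we then have $\Sing(Y)_{\ind=1}\subset\Sing(E)$; since $\mathrm{wt}_v(x^2)=\mathrm{wt}_v(\varphi)$, Lemma \ref{u1} clears the chart $U_1$, and a direct check (Lemma \ref{nonsing}) shows the chart carrying $Q_2$ contains only the terminal quotient point itself and no index-one singularity. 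Thus every index-one singularity lies on the two charts $U_3,U_4$, inside $\Sing(E)$.

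The heart of the matter is the local computation on $U_3$ and $U_4$. After the substitution and division by the appropriate power of the chart variable, the equation retains the shape $x^2+y^2\cdot(\text{monomial})+\tilde f(z,u)$: because $f$ is a function of $z,u$ alone and $y$ enters only through $y^2$, neither a term $y^3$ nor any of the degenerate cubic leading forms characteristic of $cE$ can be manufactured. Hence at an index-one point $R$ either the quadratic part still contains $x^2+y^2$ (or $\xi\eta$), so that $R$ is of type $cA$ by Corollary \ref{cAq}; or the coefficient of $y^2$ specializes to zero and the equation becomes $x^2+(\text{$3$-jet, nonzero and not a cube})$, so that $R$ is of type $cA$ or $cD$ by Lemma \ref{cDq}. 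In every case $R$ is $cA$ or $cD$, which is the assertion.

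I expect the main obstacle to be precisely the selection of $v$, together with the completing-of-squares and coordinate bookkeeping, that guarantees the leading form appearing on $U_3$ is of $A$- or $D$-type Du Val and never degenerates to an $E_6,E_7$ or $E_8$ configuration. This forces a case analysis on the factorization of the leading $(z,u)$-form of $f$—whether it is a single power, a perfect square, or splits into distinct factors—mirroring the bookkeeping already carried out for the $cD$ and $cAx/2$ points in the previous section. It is here that the hypothesis $f=f(z,u)$, independent of $x$ and $y$, does the essential work of confining the index-one singularities of $Y$ to the $cA$/$cD$ part of the hierarchy.
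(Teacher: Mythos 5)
Your overall strategy coincides with the paper's: put $P\in X$ in the normal form $x^2+y^2+f(z,u)=0$ in $\bC^4/\frac14(1,3,1,2)$, choose a weight of the shape $\frac14(*,*,1,2)$ calibrated to the leading form $f_\sigma$ of $f$, invoke Hayakawa's classification for divisoriality, clear the charts $U_1,U_2$ by Lemmas \ref{u1} and \ref{nonsing}, observe that $Q_4$ has index $2$ (a $cD/2$-like point), and read off on $U_3$ an equation of the shape $y^2+x^2z+\tilde f(z,u)$, which is visibly of type $cA$ or $cD$ by the quadratic-rank dichotomy (Corollary \ref{cAq} where the rank stays $\ge 2$, Lemma \ref{cDq} where it drops). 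That part of your argument is sound and is exactly what the paper does.

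The genuine gap is the case you yourself flag as ``the main obstacle'' and then leave unresolved: when $f_\sigma=-h(z,u)^2$ is a perfect square. Your own selection criterion requires the leading form $(\text{square})+f_\sigma$ to be irreducible, and in this case it is not: with the naive weight $\frac14(\sigma+2,\sigma,1,2)$ the exceptional divisor would be $(\bfy^2-h^2=0)=(\bfy-h)(\bfy+h)=0$, reducible, so the weighted blowup is not a divisorial contraction at all (e.g.\ already for $f=z^6+u^5$, where $f_3=z^6$). The missing step is concrete: complete the square in the weight-$\frac{\sigma}{4}$ variable, rewriting $\varphi$ as $x^2+2xh(z,u)+y^2+f_{>\sigma}(z,u)$ (or with $x,y$ exchanged according to the parity of $\frac{\sigma-1}{2}$), and blow up with the augmented weight $\frac14(\sigma+4,\sigma+2,1,2)$ (resp.\ its swap), which Hayakawa's Theorem 7.4 again certifies as a divisorial contraction. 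The chart $U_3$ then reads $x^2z+2xh_\sigma(1,u)+y^2+\tilde f=0$, and the same rank argument still yields $cA$ or $cD$; without this modification the lemma is simply not proved for perfect-square leading forms. Two smaller points: the subcase $z^2\in f$ you set up (rotating $x,z$ into $\xi\eta$) is vacuous for a genuine $cAx/4$ point, since the normal form only admits monomials $z^{2i}u^j$ with $i+j\ge 3$ odd; and with the correct weight it is $y^2$, not $x^2$, that attains $wt_v(\varphi)$, so $U_1$ is cleared by Lemma \ref{nonsing} (via $wt_v(x^2)=wt_v(\varphi)+1$) rather than by Lemma \ref{u1}.
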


\begin{proof}
We may write $P \in X$ as
$$ (\varphi: x^2+y^2+f(z,u)=x^2+y^2+\sum_{i+j=2l+1 \ge 3} a_{ij} z^{2i}
u^j=0) \subset \bC^4/\frac{1}{4}(1,3,1,2).$$

Let $\sigma(P \in X):= \min \{ i+j| a_{ij} \ne 0\}$, then we may
write $f(z,u)=f_\sigma(z,u)+f_{> \sigma}(z,u)$.

\noindent
{\bf Case 1.} $f_\sigma(z,u)$ is not a perfect square.\\
Depending on parity of $\frac{\sigma-1}{2}$, we consider $\wBl_v: Y \to X$ with weights $v=\frac{1}{4}(\sigma+2, \sigma, 1,2)$
or $\frac{1}{4}(\sigma, \sigma+2, 1,2)$. By \cite[Theorem 7.4]{HaI},
this is the only divisorial contraction.

Without loss of generality, we study the first weight. By Lemma
\ref{nonsing}, \ref{cA}, we have $\Sing(Y)_{\ind=1} \cap U_i$ is empty
for $i=1,2$. Moreover,  $Q_4$ is a $cD/2$-like point of index $2$. It
suffices to consider $U_3$.

In $U_3$,  $Y \cap U_3$ is defined by
$$ (\tilde{\varphi}: x^2z+y^2+\tilde{f}(z,u)=0) \subset \bC^4.$$ Therefore, it is immediate to see that $\Sing(Y) \cap U_3$ is either of type $cA$ or $cD$.

\noindent
{\bf Case 2.} $f_\sigma(z,u)=-h(z,u)^2$ is  a perfect square.\\
Depending on parity of  $\frac{\sigma-1}{2}$, we need to make a
coordinate change so that $P \in X$ is written as
$$(\varphi: x^2+2x h(z,u)+y^2+ f_{> \sigma}(z,u)=0) \subset \bC^4/\frac{1}{4}(1,3,1,2),$$
or
$$(\varphi: y^2+2y h(z,u)+x^2+ f_{> \sigma}(z,u)=0)\subset \bC^4/\frac{1}{4}(1,3,1,2).$$

 We consider weighted blowup $Y \to
X$ with weights $\frac{1}{4}(\sigma+4, \sigma+2, 1,2)$ or
$\frac{1}{4}(\sigma+2, \sigma+4, 1,2)$ respectively. By
\cite[Theorem 7.4]{HaI}, this is a divisorial contraction.

Without loss of generality, we study the first weight. By Lemma
 \ref{nonsing}, we have $\Sing(Y)_{\ind=1} \cap U_i$ is empty
for $i=1,2$. Then $R \ne Q_4$ for $Q_4$ is a $cD/2$-like point of
index $2$. It suffices to consider $U_3$.
Indeed,  $Y \cap U_3$ is defined by
$$ (\tilde{\varphi}: x^2z+2xh_\sigma(1,u)+y^2+\tilde{f}(z,u)=0) \subset \bC^4.$$ Therefore, it is immediate to see that $\Sing(Y) \cap U_3$ is either of type $cA$ or $cD$.
\end{proof}

\begin{lem} \label{cD2}
Given $P \in X$ of type $cD/2$, there is a divisorial contraction $Y
\to X$ with discrepancy $\frac{1}{2}$ such that $\Sing(Y)_{\ind=1}$ is of type $cA$ or $cD$.
\end{lem}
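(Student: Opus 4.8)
The plan is to treat $cD/2$ points in complete analogy with the $cAx/4$ points of Lemma \ref{cAx4}: fix a normal form, perform one weighted blowup adapted to the index-two action, and read off $\Sing(Y)_{\ind=1}$ chart by chart. As there, we are \emph{not} asked to resolve $P \in X$ inside this lemma --- only to produce a single divisorial contraction whose index-one locus is $cA$ or $cD$; the remaining higher-index points of $Y$ are then absorbed by Hayakawa's index-reduction (Theorem \ref{Haya}).

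First I would fix the normal form. A $cD/2$ germ can be brought to
$$(\varphi\colon x^2 + y^2z + \lambda y u^{l} + f(z,u)=0) \subset \bC^4/\tfrac12(1,1,0,1),$$
where, since $x^2$ and $y^2z$ must be semi-invariant of the same weight, the coordinate $z$ is forced to be invariant; consequently $f(z,u)=\sum a_{ij}z^iu^j$ involves only even powers of $u$, the exponent $l$ is odd, and the isolatedness of $P$ guarantees a term $z^q$ or $z^{q-1}u$ (together with a pure $u$-term when $\lambda=0$). Writing $f=f_\sigma+f_{>\sigma}$ for the lowest-weight part, I would split into two cases according to whether the leading form is a perfect square, exactly as in Lemma \ref{cAx4}.

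In the non-square case I would take a single $\wBl_v\colon Y\to X$ whose weight has the shape $v=\tfrac12(a_1,a_2,2,1)$, with $a_1,a_2$ odd and close to $\sigma$, their parity dictated by $\sigma$ as in Case 1 of Lemma \ref{cAx4}; in the perfect-square case I would first complete the square and pass to the shifted weight $v=\tfrac12(a_1+2,a_2+2,2,1)$, as in Case 2 there. That each of these is a divisorial contraction with minimal discrepancy $\tfrac12$ follows from Hayakawa's classification of divisorial contractions over $cD/2$ points (cf. \cite[Theorem 7.4]{HaI}, \cite{HaD}), or may be checked directly via Theorem \ref{terminal}. I would then proceed chart by chart: by Lemma \ref{nonsing} the charts $U_1,U_2$ carry no index-one singularities, their only singular points $Q_1,Q_2$ being terminal quotient singularities, while the distinguished exceptional point of index two is again $cD/2$-like and is left to the outer index-reduction. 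On the one remaining chart, which is isomorphic to $\bC^4$, the proper transform takes the shape
$$(\tilde\varphi\colon x^2 z + y^2 + \tilde f(z,u)=0)\subset \bC^4,$$
precisely as the $U_3$ equation in Lemma \ref{cAx4}. Since a nondegenerate $y^2$ survives and the three-jet contains $x^2z$ (hence is not a cube), Corollary \ref{cAq} (on the locus $z\ne0$) together with Lemma \ref{cDq} (after completing the square in $y$ along $z=0$) forces every $R\in\Sing(Y)_{\ind=1}$ on this chart to be of type $cA$ or $cD$. Thus $\Sing(Y)_{\ind=1}$ is of type $cA$ or $cD$, as required.

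The main obstacle is exactly this last chart computation. One must carry the normal form faithfully through $\wBl_v$ and confirm that the transformed equation cannot acquire a singularity worse than $cD$; the delicate point is the perfect-square subcase, where after completing the square one has to verify that the induced three-jet in the $(z,u)$ directions is nonzero and not a perfect cube, so that Lemma \ref{cDq} applies and no $cE$-type point can slip in. As in the $cAx/4$ and $cD$ arguments, this is settled by a case analysis on the factorization of $f_\sigma$.
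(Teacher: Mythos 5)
Your proposal has two genuine gaps. First, your normal form is incomplete: by Mori's classification a $cD/2$ point $(\varphi=0)\subset\bC^4/\frac12(1,1,0,1)$ is given \emph{either} by $\varphi=x^2+yzu+y^{2a}+u^{2b}+z^c$ ($a\ge b\ge 2$, $c\ge 3$) \emph{or} by $\varphi=x^2+y^2z+\lambda yu^{2l+1}+f(z,u^2)$. The first family, whose cubic $3$-jet $yzu$ splits into three distinct linear factors, cannot be brought to your form $x^2+y^2z+\lambda yu^l+f(z,u)$ by any equivariant coordinate change, so your argument simply does not see it. The paper handles it separately with the weights $\frac12(3,1,2,1)$ or $\frac12(3,1,2,3)$, using that $wt_v(yzu)=wt_v(\varphi)$ forces a cross-term $uz$, $yu$, or $yz$ into each affine chart, whence Corollary \ref{cAq} gives $cA$-type index-one points.

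Second, the analogy with Lemma \ref{cAx4} breaks down in the chart computation. A $cAx/4$ equation is $x^2+y^2+f(z,u)$, which is why its $U_3$ chart takes the shape $x^2z+y^2+\tilde f(z,u)$; a $cD/2$ equation of the second family has $y^2z$ in place of $y^2$, the coordinate $z$ has weight $0$ under the group action, and the decisive dichotomy is not whether the leading form of $f$ is a perfect square but how $2l-1$ compares with $\sigma=\min\{2i+2j\mid z^iu^{2j}\in f\}$ (the paper sets $\sigma^\flat=\min\{2l-1,\sigma\}$ and splits on $l=1$, $\sigma=2$, and $\sigma^\flat\ge 3$). Correspondingly the weights are not of your shape $\frac12(a_1,a_2,2,1)$ with $a_1,a_2$ odd near $\sigma$: the paper's main subcase uses $\frac12(\sigma',\sigma'-2,4,1)$ (third entry $4$, not $2$), and the chart where index-one singularities can survive is $U_4$, where the transformed equation retains the form $x^2+y^2z+\lambda yu^{(2l-1-\sigma')/2}+\tilde f=0$ and is therefore visibly at worst $cD$; no completion of the square or appeal to Lemma \ref{cDq} on a chart equation $x^2z+y^2+\tilde f$ is involved. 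As written, your chart equation is not what the blowup produces, so the final step ``every $R\in\Sing(Y)_{\ind=1}$ is $cA$ or $cD$'' is not established.
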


\begin{proof} By Mori's classification \cite{Mo82, YPG},
one has that $P \in X$ is given by $(\varphi=0) \subset
\bC^4/\frac{1}{2}(1,1,0,1)$ with $\varphi$ being one the following

$$ \left\{ \begin{array}{ll}
x^2+yzu+y^{2a}+u^{2b}+z^c, & a \ge b \ge 2, c \ge 3 \\
x^2+y^2 z+\lambda y u^{2l+1} + f(z,u^2).
\end{array}
\right.$$

\noindent {\bf Case 1.} $ \varphi=x^2+yzu+y^{2a}+u^{2b}+z^c$.\\
We take weighted blowup $Y \to X$ with weights
$v=\frac{1}{2}(3,1,2,1)$ (resp. $\frac{1}{2}(3,1,2,3)$) if $a=b=2$
(resp. $a \ge 3$), which is a divisorial contraction by
\cite{HaII}. Note that $wt_v(yzu)=wt_v(\varphi)$. Hence $uz$
(resp. $yu, yz$) appears in the equation of $Y \cap U_2$ (resp.
$U_3, U_4$). By Corollary \ref{cAq}, we conclude that
$\Sing(Y)_{\ind=1} \cap U_i$ is of type $cA$ for $i=2,3,4$.
Together with Lemma \ref{u1}, then we are done with this case.

\noindent {\bf Case 2.} $\varphi=x^2+y^2 z+\lambda y u^{2l+1} + f(z,u^2)$.\\
We may write $f(z,u^2)=\sum a_{ij}z^i u^{2j} \in
(z^3,z^2u^2,u^4)\bC\{z,u^2\}$. We define
$$ \left\{ \begin{array}{l}
\sigma:= \min \{ 2i+2j| z^iu^{2j} \in f\};\\
\sigma^\flat:= \min \{ 2l-1, \sigma\}.
\end{array}
\right. $$ Note that we have $l \ge 1$ and $\sigma \ge 2$ for $P \in
X$ is a $cD/2$ point.

\noindent
{\bf Subcase 2-1.} $l=1$.\\
We consider weighted blowup $Y \to X$ with weight
$v=\frac{1}{2}(2,1,2,1)$, which is a divisorial contraction by \cite{HaII}. Now $$E =( \bfx^2+\bfy^2 \bfz+ \lambda
\bfy \bfu^3+f_{wt_v=2}=0) \subset \bP(2,1,2,1).$$ By Lemma \ref{u1},
\ref{nonsing}, one sees that $\Sing(Y)_{\ind=1} \cap U_i$ is empty for
$i=1,2,4$. Since $Q_3$ is at worst of type $cAx/2$. We are done.

\noindent
{\bf Subcase 2-2.} $l \ge 2$ and $\sigma=2$.\\
One has $f_{\sigma=2}=u^4$, in particular, $u^4 \in f$. We consider
weighted blowup $Y \to X$ with weight $v=\frac{1}{2}(2,1,2,1)$
again. Now $$E =( \bfx^2+\bfy^2 \bfz+ \bfu^4=0) \subset
\bP(2,1,2,1).$$ We thus have $\Sing(Y)_{\ind=1} \subset \Sing(E)
\subset \{Q_3\}$. However, $Q_3$ is a point of index $2$. We are
done.

\noindent
{\bf Subcase 2-3.} $ \sigma^\flat \ge 3$.\\
Let $$ \sigma':= 2 \lfloor \frac{\sigma^\flat -1}{2} \rfloor
+1=\left\{
\begin{array}{ll}
\sigma^\flat &\text{ if $\sigma^\flat$ is odd;}\\
 \sigma^\flat-1
&\text{ if $\sigma^\flat$ is even;}
\end{array} \right.$$

We consider weighted blowup $Y \to X$ with weight
$v=\frac{1}{2}(\sigma',\sigma'-2,4,1)$, which is a divisorial contraction by \cite{HaII}. Clearly, by Lemma \ref{u1},
\ref{nonsing}, one sees that $\Sing(Y)_{\ind=1} \cap U_i$ is empty for
$i=1,2$. Since $Q_3$ is a point of index $4$, it remains to consider
$U_4$. Now $Y \cap U_4$ is given by
$$ (\tilde{\varphi}: x^2+y^2z+\lambda yu^{({2l-1-\sigma'})/{2}}+\tilde{f}=0) \subset \bC^4.
$$

It follows that $Y \cap U_4$ is at worst of type $cD$.
 We are done.
\end{proof}

\begin{lem} \label{cD3}
Given $P \in X$ of type $cD/3$, there is a divisorial contraction $Y
\to X$ with discrepancy $\frac{1}{3}$ such that $\Sing(Y)_{\ind=1}$ is of type $cA$ or $cD$.
\end{lem}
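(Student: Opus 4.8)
The plan is to follow exactly the mechanism already set up for Lemmas \ref{cAx4} and \ref{cD2}. Since Hayakawa's partial resolution (Theorem \ref{Haya}) allows us to pick \emph{any} divisorial contraction with minimal discrepancy at each step, it suffices to exhibit, for each normal form of a $cD/3$ point, a single weighted blowup $\wBl_v\colon Y \to X$ with discrepancy $\frac13$ whose index-one singular locus $\Sing(Y)_{\ind=1}$ consists only of $cA$ and $cD$ points. In particular no recursion internal to the $cD/3$ class is needed: any higher-index points created on $Y$ (whether $cA/r$, $cAx/4$, $cD/2$, or a fresh $cD/3$-like point) are disposed of by Hayakawa's process, so I only have to control the newly created points of index $1$, and an index-one $cD$-like point is just a $cD$ point, handled by Section 4.

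First I would invoke Mori's classification \cite{Mo82, YPG} to put $P \in X$ into a normal form $(\varphi=0)\subset \bC^4/\frac13(a_1,a_2,a_3,a_4)$; as in the $cD/2$ case this yields a short list of families whose leading behaviour is measured by a monomial weight $\sigma$ and its truncation $\sigma^\flat$, in complete analogy with the invariants $\sigma,\sigma^\flat$ introduced in Lemma \ref{cD2}. For the family (if present) in which a mixed monomial such as $yzu$ survives at the lowest weight, I would choose the weight with $wt_v(yzu)=wt_v(\varphi)$; then each of the three charts carrying $z$, $u$, or $y$ acquires a monomial $uz$, $yu$, or $yz$ of minimal weight, so Corollary \ref{cAq} forces the index-one points there to be of type $cA$, while Lemma \ref{u1} clears the remaining chart. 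This is the direct analogue of Case 1 of Lemma \ref{cD2}.

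For the remaining family, written $x^2 + y^2z + \lambda y u^{m} + f$, I would select $v=\frac13(\ldots)$ guided by $\sigma^\flat$, exactly as in Subcases 2-1 through 2-3 of Lemma \ref{cD2}: the contraction property and discrepancy $\frac13$ are supplied by \cite{HaII}, Lemmas \ref{u1} and \ref{nonsing} give $\Sing(Y)_{\ind=1}\cap(U_1\cup U_2)=\emptyset$ (the points $Q_1,Q_2$ being terminal quotient singularities of higher index), and on the two low-weight charts I would read off the transformed equation $\tilde\varphi$. The shapes $x^2+y^2+(\cdots)$ and $x^2+y^2z+(\cdots)$ appearing there are recognised by Lemma \ref{cA} (resp.\ Lemma \ref{cDq}) as being at worst $cA$ (resp.\ $cD$), which is precisely the desired conclusion.

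The main obstacle, as in the earlier lemmas, is the degenerate subcase where the leading form $f_\sigma$ (respectively $f_{\sigma^\flat}$) is a perfect cube, or a perfect square after completing the square in $x$. A careless blowup then leaves an index-one point worse than $cD$, typically a $cE$ point, which would fall outside the hierarchy built so far. To avoid this I would first absorb the offending factor by a coordinate change (shifting $z$ and $y$, as in Subcase 1-2 of Lemma \ref{mu3} and Case 2 of Lemma \ref{cAx4}) and then pass to a \emph{heavier} weight $v$; the extra weight raises the order of contact so that the transformed equation on the critical chart again carries a genuine square term $x^2$ or $y^2$ together with a $cD$-type tail, keeping $\Sing(Y)_{\ind=1}$ inside $\{cA, cD\}$. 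Checking that this heavier weight is still a divisorial contraction of discrepancy $\frac13$ (via \cite{HaII}, or Theorem \ref{terminal}) and that no $cE$ point survives on any chart is the delicate, form-by-form computation that constitutes the real content of the lemma.
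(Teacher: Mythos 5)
There is a genuine gap, and it starts with the normal forms. By Mori's classification a $cD/3$ point sits in $\bC^4/\frac{1}{3}(0,2,1,1)$ with equation $x^2+y^3+zu(z+u)$, or $x^2+y^3+zu^2+yg+h$, or $x^2+y^3+z^3+yg+h$ (with $g\in\frak{m}^4$, $h\in\frak{m}^6$) --- the cubic term is $y^3$, not $y^2z$. Your proposal models the situation on Lemma \ref{cD2}: you posit a family with a surviving $yzu$ monomial and a family of the shape $x^2+y^2z+\lambda yu^m+f$, and then transplant the $\sigma,\sigma^\flat$ case division. Neither family occurs for $cD/3$, so the charts never produce the shapes $x^2+y^2+(\cdots)$ or $x^2+y^2z+(\cdots)$ that you want to feed into Lemma \ref{cA} and Lemma \ref{cDq}; what actually appears (e.g.\ on $U_4$ after the blowup with weight $\frac{1}{3}(6,5,4,1)$ in the hardest subcase) is $x^2+y^3u+z^3+\cdots$, i.e.\ a $cE$-like equation. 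The paper's weights $\frac{1}{3}(3,2,4,1)$ and $\frac{1}{3}(6,5,4,1)$ are tailored to the $y^3$ form and are not what an analogy with $cD/2$ would suggest.

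Your closing paragraph correctly senses the danger of a residual $cE$ point, but the proposed fix --- absorb a perfect cube/square by a coordinate change and pass to a heavier weight so that $\Sing(Y)_{\ind=1}$ stays inside $\{cA,cD\}$ --- is exactly the step that fails. In the subcase $\varphi=x^2+y^3+z^3+yg+h$ with $u^4\not\in g$ and $zu^5,u^6,u^9\not\in h$, the paper's (heavier) weight $\frac{1}{3}(6,5,4,1)$ still leaves a point of type $cE_6$ on $U_4$; no coordinate change removes it, and the paper has to break the strict statement of the lemma by forward-referencing Cases 1 and 2 of the $cE_6$ resolution (Theorem \ref{cE6}) to see that only $cA$, $cD$, $cA/2$, $cAx/2$ and terminal quotient points arise after one more blowup. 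So the delicate ``form-by-form computation'' you defer is not merely delicate: carried out honestly, it shows the conclusion you are aiming for ($\Sing(Y)_{\ind=1}\subset\{cA,cD\}$ after a single blowup) is not attainable in that subcase, and an extra layer of the induction, reaching into the $cE_6$ analysis, is genuinely needed.
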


\begin{proof} By Mori's classification \cite{Mo82, YPG}, one has that $P \in X$ is given as $(\varphi=0) \subset
\bC^4/\frac{1}{3}(0,2,1,1)$ with $\varphi$ being one of the
following:
$$ \left\{ \begin{array}{ll}
x^2+y^3+zu(z+u); & \\

x^2+y^3+zu^2+ yg(z,u)+h(z,u); &g \in \frak{m}^4, h \in \frak{m}^6;
\\

x^2+y^3+z^3+yg(z,u)+h(z,u); &g \in \frak{m}^4, h \in \frak{m}^6.

\end{array} \right.
$$

\noindent {\bf Case 1.} $\varphi$ is one of the first two
cases.\\
By \cite[Theorem 9.9, 9.14, 9.20]{HaI}, the weighted blowup $Y \to
X$ with weight $\frac{1}{3}(3,2,4,1)$ is a divisorial contraction.
Now $$E = \left\{ \begin{array}{l} \bfx^2+\bfy^3+\bfz\bfu^2=0 \text{ or }\\
\bfx^2+\bfy^3+\bfz\bfu^2 + \lambda \bfy \bfu^4 + \lambda' \bfu^6=0
\end{array} \right\} \subset \bP(3,2,4,1),$$
for some $\lambda, \lambda'$ respectively. It is easy to check that
$\Sing(E)=Q_3$ and hence $\Sing(Y)_{\ind=1}$ is empty for the first two
cases.

\noindent{\bf Case 2.} $ \varphi=x^2+y^3+z^3+yg(z,u)+h(z,u)$.\\
\noindent {\bf Subcase 2-1.} Either $u^4 \in g$ or $u^6 \in h$.\\
Then we consider the weighted blowup with weight
$\frac{1}{3}(3,2,4,1)$ again, which is a divisorial contraction (cf. \cite[Theorem 9.20]{HaI}. Now
$$E=( \bfx^2+\bfy^3+ \lambda \bfy \bfu^4 + \lambda' \bfu^6=0)
\subset \bP(3,2,4,1),$$ for some $(\lambda, \lambda') \ne (0,0)$.
One sees that $\Sing(E)=Q_3$ and hence $\Sing(Y)_{\ind=1}$ is
empty.

\noindent {\bf Subcase 2-2.}   $u^4 \not \in g$, $u^6 \not \in h$
and either
$zu^5 \in h$ or $u^9 \in h$.\\
 Then we consider the weighted blowup with weight
$\frac{1}{3}(3,2,4,1)$ which is a divisorial contraction. Now
$$E=( \bfx^2+\bfy^3=0) \subset \bP(3,2,4,1).$$ One sees that
$\Sing(E) \subset U_3 \cup U_4$. However, the equation of $Y \cap
U_4$ contains the term $zu$ or $u$ and hence contains at worst $cA$
points by Lemma \ref{cA}. Together with the fact that $Q_3$ is a
$cAx/4$ point, we are done with this case.

\noindent {\bf Subcase 2-3.} $u^4 \not \in g$, all $zu^5, u^6, u^9
\not \in h$.\\
 Then we consider the weighted blowup $Y \to X$ with
weight $\frac{1}{3}(6,5,4,1)$, which is a divisorial contraction
by \cite[Theorem 9.25]{HaI}.

By Lemma \ref{nonsing}, $Y \cap U_2$ is
nonsingular away from $Q_2$, which is a quotient singularity of
index $5$. Together with $\Sing(Y) \cap U_1=\emptyset$ and $Q_3 \not \in Y$, it remains to check $Y \cap U_4$, which is defined by
$$(\tilde{\varphi}:x^2+y^3u+z^3+\text{others}=0) \subset \bC^4. $$
 which is at worst of type
$cE_6$. In fact, this corresponds to Case 1 and 2 of the proof of Theorem \ref{cE6}. Notice that in the proof, we use weighted blowups $\wBl_v$  with $v=(2,2,1,1),(3,2,1,1)$ or $(3,2,2,1)$. After weighted blowup, there could have singularities of type $cA, cD, cA/2$, $cAx/2$, and terminal quotients. We thus concludes that feasible resolution exists for this case.
\end{proof}

We thus conclude the section by the following:
\begin{thm} \label{hind}
There is a feasible resolution for any singularity of type $cAx/4$,
$cD/3$, or $cD/2$.
\end{thm}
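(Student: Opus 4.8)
The plan is to read this theorem as the assembly of the three preceding lemmas through the reduction spelled out at the start of the section. Recall from Hayakawa's result \cite{HaII} (Theorem \ref{Haya}) that any germ of index $r>1$ admits a partial resolution $X_n \to \cdots \to X_1 \to X_0 = X$ to a Gorenstein threefold $X_n$, in which every $X_{i+1}\to X_i$ is a divisorial contraction to a point of index $>1$ with minimal discrepancy, and that such a tower may be constructed by choosing \emph{any} minimal-discrepancy divisorial contraction at each step. Since every contraction has center of index $>1$, the index-$1$ points produced along the way are never contracted again; hence $\Sing(X_n)_{\ind=1}$ is exactly the union of the index-$1$ singularities spun off at the individual steps. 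My strategy is therefore not to re-prove termination, but to make at each step a \emph{controlled} choice of contraction so that every index-$1$ singularity created is of type $cA$ or $cD$, and then to invoke the feasible resolutions already in hand for those two classes.

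Concretely, I would fix $P\in X$ of type $cAx/4$, $cD/2$, or $cD/3$ and run the tower with the following dictionary of choices. At a step whose center is $cAx/4$, $cD/2$, or $cD/3$, take the contraction furnished by Lemma \ref{cAx4}, \ref{cD2}, or \ref{cD3} respectively; these were built precisely so that $\Sing(Y)_{\ind=1}$ is of type $cA$ or $cD$. At a step whose center is a terminal quotient singularity, a $cA/r$ point, or a $cAx/2$ point, use the contractions analyzed in Lemma \ref{quo}, Corollary \ref{cAr}, and the proof of Theorem \ref{cD}, which likewise introduce only $cA$ and $cD$ index-$1$ points (a quotient singularity contributing none). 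Inspection of the three lemmas shows that the \emph{higher}-index points occurring on each $Y$ are again terminal quotients together with points of type $cAx/4$, $cD/2$, $cD/3$ (after the mild reductions there) and the $cA/r$, $cAx/2$ points already treated, so the dictionary is closed under the operation and can be iterated throughout the tower.

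The one genuinely awkward spot is Subcase 2-3 of Lemma \ref{cD3}, where the selected contraction produces an index-$1$ point of type $cE_6$ rather than $cA$ or $cD$. I would dispose of it exactly as indicated there: resolve that $cE_6$ point by the contraction used in Cases 1 and 2 of the proof of Theorem \ref{cE6}, whose only outputs are singularities of type $cA$, $cD$, $cA/2$, $cAx/2$, and terminal quotients — all classes already under control, so no genuine appeal to the harder $cE$ machinery is incurred. With these choices in force, every index-$1$ singularity created in the tower lies in $cA \cup cD$, whence $\Sing(X_n)_{\ind=1}$ consists solely of such points. By Theorem \ref{cArsln}, Corollary \ref{cAr}, and Theorem \ref{cD} each admits a feasible resolution; splicing these onto the Gorenstein model $X_n$ and composing with the tower $X_n \to \cdots \to X$ yields a feasible resolution of $P\in X$.

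The hard part, I expect, is precisely the apparent circularity among the index-$>1$ types at this level: resolving a $cAx/4$ point throws off $cD/2$-like centers, resolving a $cD/2$ point can throw off an index-$4$ ($cAx/4$-type) center, and so on. I would resist the temptation to set up an independent induction on the index to break the loop, since that seems not to terminate cleanly on its own; instead termination is imported wholesale from Hayakawa's theorem, which guarantees that the tower reaches a Gorenstein model in finitely many steps whatever minimal-discrepancy contractions are chosen. What remains to verify is only the bookkeeping above — that a consistent choice of contractions keeps every spun-off index-$1$ singularity inside the already-resolved classes $cA$ and $cD$ — which is supplied term by term by Lemmas \ref{cAx4}, \ref{cD2}, and \ref{cD3}.
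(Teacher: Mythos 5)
Your proposal is correct and follows essentially the same route as the paper: Theorem \ref{hind} is obtained by combining Hayakawa's partial resolution (with termination imported from \cite{HaII} and the freedom to choose any minimal-discrepancy contraction at each step) with the controlled choices of Lemmas \ref{cAx4}, \ref{cD2}, \ref{cD3}, so that all index-$1$ singularities produced are of type $cA$ or $cD$ and are then resolved by the earlier sections. Your handling of the $cE_6$ point arising in Subcase 2-3 of Lemma \ref{cD3} is exactly the paper's.
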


\section{resolution of $cE$ and $cE/2$ points}
Recall that a $cE$ point has the following description.
$$(\varphi: x^2+y^3+f(y,z,u)= x^2+y^3+y g(z,u)+h(z,u)=0) \subset \bC^4.$$
An isolated singularity with the above desription is called a $cE$-like singularity.

For a polynomial ( resp. formal power series) $G(z,u) \in \bC[z,u]$
(resp. $ \bC[[z,u]]$), we define

$$\tau(G):=\min\{j+k| z^ju^k \in G\}.$$

For $cE$ singularity, one has $\tau(g) \ge 3$ and $\tau(h) \ge 4$.
Moreover, either $\tau(g)=3$ or $\tau(h) \le 5$. More precisely,

\begin{enumerate}
\item It is $cE_6$ if $\tau(h)=4$ and $\tau (g) \ge 3$.

\item It is $cE_7$ if $\tau(h) \ge 5$ and $\tau(g)=3$.

\item It is $cE_8$ if $\tau(h)=5 $ and $\tau (g) \ge 4$.
\end{enumerate}

\begin{rem}
An isolated $cE$-like singularity is at worst of type $cD$ (resp.
$cE_6$, $cE_7$, $cE_8$) if $\tau(g) \le 2$ or $\tau (h) \le 3$
(resp. $\tau(h) \le 4$, $\tau(g) \le 3$, $\tau(h) \le 5$).
\end{rem}



\begin{setup} Notations and Conventions \end{setup}
{\bf 1.}  We fix the notation that  $g_3(z,u):=g_{\tau=3}(z,u),
h_4(z,u):=h_{\tau=4}(z,u)$ and $h_5(z,u):=h_{\tau=5}(z,u)$. In the
case of $cE_6$, $\tau(h)=4$. By replacing $z,u$ and up to a
constant, we may  and do assume that
$$ h_4 \in \{ z^4,  z^4+z^3u, z^4+2z^3u+ z^2u^2, z^4+z^2u^2, z^4+zu^3 \}.$$
In particular, $z^4 \in h_4$.

In the case of $cE_7$, $\tau(g)=3$.  We may and do assume that
$$g_3 \in \{ z^3,z^3+z^2u, z^3+zu^2\}.$$ In particular, $z^3 \in g_3$.

In the case of $cE_8$,  $\tau(h)=5$. We may  and do assume
that
$$ h_5 \in \{  z^5,z^5+z^4u,z^5+2z^4u+z^3u^2,z^5+z^3u^2,z^5+2z^4u-z^3u^2-2z^2u^3, z^5+z^2u^3,
z^5+zu^4 \}.$$ In particular, $z^5 \in h_5$.

{\bf 2.}  We define
$$\tau^*(\varphi):= \min\{p| y^iz^ju^p \in \varphi \text{ with }
i+j \le 1\}.$$

Since $P \in X$ is isolated, there is a term $yu^p, zu^p$ or
$u^p$ in $\varphi$ otherwise $P$ is singular along a line
$(x=y=z=0)$. Hence $\tau^*(\varphi)$ is a well-defined  integer.

{\bf 3.} For a weight $v=(a,b,k,1)$, we denote it $v_l$ with
$l=a+b+k-1$. In our discussion, we always consider weight $v_l$
such that $v_l(\varphi)=l$.

{\bf 4.} Fix a weight $v=\frac{1}{r}(a,b,k,1)$ with $r=1,2$, we
write
$$\varphi=x^2+y^3+y g_v+yg_{v+1}+y g_{>}+h_{v}+v_{v+1}+h_{>}, $$
where $g_v$ (resp. $g_{v+1}$) is the homogeneous part of $g(z,u)$
such that $wt_{v}(yg_v)=wt_v (\varphi)$ (resp.
$wt_{v}(yg_v)=wt_v (\varphi)+1$) and $g_{>}$ is the remaining
part with greater weight, and $h_v$ (resp. $h_{v+1}$) is the
homogeneous part of $h(z,u)$ with   $v$-weight  equal to
$wt_{v}(\varphi)$ (resp. $wt_{v}(\varphi)+1$) and $h_{>}$ is the
remaining part with greater weight.

{\bf 5.}  For simplicity of notation, sometime we may denote by
$g_{m}$ or $h_{m}$ for the $v$-homogeneous part with $v$-weight
equal to $m$. This notation should not be confused
with $g_3$ nor $g_v$.

\subsection{Some preparation}
The general strategy is as following. For a given $cE$ or $cE/2$
singularity $P \in X$. We consider weighted blowup $Y \to X$ with
weight $v=\frac{1}{r}(a,b,k,1)$ and $r=1,2$ such that
$\frac{1}{2}(a+b+k+1)-wt_v(\varphi)=1+\frac{1}{r}$. This is a weighted blowup
with discrepancy $\frac{1}{r}$ if $E$ is irreducible. We check
that $\Sing(Y) \cap U_4$ is isolated and each $R \in \Sing(Y) \cap
U_4$ is terminal. Then the weighted blowup $Y \to X$ is a
divisorial contraction with discrepancy $\frac{1}{r}$ by Theorem
\ref{terminal}.

Moreover, we check that each singular point $R \in \Sing(Y)_{\ind=1}$ is "milder" than $P \in X$ in the sense that either it is of milder type, or it can only admit smaller weight. We can prove the existence of feasible resolution by induction on types and weights.






\begin{setup} \label{u4}
We work on $Y \cap U_4$.

Now $Y \cap U_4$ is defined  by $\tilde{\varphi}$, which can be written as
$$\begin{array}{ll} \tilde{\varphi} &= x^2u^{wt_v(x^2)-wt_v(\varphi)}+ y^3u^{wt_v(y^3)-wt_v(\varphi)}\\
&+ y g_v(z,1)+yu g_{v+1}(z,1)+ y\widetilde{g_{>}} + h_{v}(z,1)+
uh_{v+1}(z,1)+ \widetilde{h_{>}}, \end{array} $$ such
that $u^2| \widetilde{g_{>}}$ and $u^2 | \widetilde{h_{>v}}$.




\begin{lem} \label{isolated}
Suppose that $wt_v(x^2)= wt_v(\varphi)$ or $wt_v(\varphi)+1$ and
$wt_v(y^3)=wt_v(\varphi)$. Then $\Sing(Y) \cap U_4$ is isolated
UNLESS:
$$\text{There is $s(z,u)$ such that} \left\{ \begin{array}{l}
g_v=-3 s(z,u)^2,\\
h_v=2 s(z,u)^3,\\
h_{v+1}=-s(z,u) g_{v+1}.
\end{array} \right. \eqno{\natural}
$$
\end{lem}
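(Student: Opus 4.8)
The plan is to read off $\Sing(Y)\cap U_4$ from the Jacobian criterion applied to the chart equation $\tilde\varphi$ of \ref{u4}, and to isolate exactly when this locus has positive dimension. Since $f\colon Y\to X$ is an isomorphism away from $P$ and $X$ is smooth off $P$, we have $\Sing(Y)\subset E$, so every singular point of $Y\cap U_4$ lies on $\{u=0\}$; it therefore suffices to intersect $\{u=0\}$ with $\tilde\varphi=\partial_x\tilde\varphi=\partial_y\tilde\varphi=\partial_z\tilde\varphi=\partial_u\tilde\varphi=0$. Put $\gamma:=g_v(z,1),\ \eta:=h_v(z,1),\ \gamma_1:=g_{v+1}(z,1),\ \eta_1:=h_{v+1}(z,1)$. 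Because $\widetilde{g_{>}},\widetilde{h_{>}}$ are divisible by $u^2$, they and their first derivatives vanish on $\{u=0\}$; so in the main case $wt_v(x^2)=wt_v(\varphi)$ the equation $\partial_x\tilde\varphi=2x$ forces $x=0$ and the system collapses to
\begin{gather*}
y^3+\gamma y+\eta=0,\qquad 3y^2+\gamma=0,\\
y\gamma'+\eta'=0,\qquad y\gamma_1+\eta_1=0,
\end{gather*}
where $'$ denotes $d/dz$ (note there is no $y^2$ term, as $P\in X$ is $cE$-like).

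The key observation is that the first two equations say precisely that $y$ is a repeated root of the depressed cubic $P(y)=y^3+\gamma y+\eta$, governed by its discriminant $\Delta(z)=-4\gamma^3-27\eta^2$. If $\Delta\not\equiv0$, repeated roots occur only over the finitely many zeros of $\Delta$, so already the first two equations carve out a finite set and $\Sing(Y)\cap U_4$ is isolated. If $\Delta\equiv0$, then $P$ has a repeated root for every $z$, and a unique-factorization argument (the relation $4\gamma^3=-27\eta^2$ forces the multiplicity of every prime factor of $\gamma$ to be even) yields $\sigma(z)$ with $P(y)=(y-\sigma)^2(y+2\sigma)$, that is $\gamma=-3\sigma^2$ and $\eta=2\sigma^3$; rehomogenizing gives $g_v=-3s^2,\ h_v=2s^3$, the first half of $\natural$. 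On the repeated-root curve $y=\sigma$ the third equation $y\gamma'+\eta'=0$ is automatic, since a factor $(y-\sigma)^2$ of $P$ forces $\partial_zP$ to carry a factor $(y-\sigma)$; indeed $\sigma\gamma'+\eta'=-6\sigma^2\sigma'+6\sigma^2\sigma'=0$.

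Consequently, when $\Delta\equiv0$ the only equation that can still cut the curve $\{x=u=0,\ y=\sigma\}$ down to a finite set is the last one, which there reads $\sigma\gamma_1+\eta_1=0$. If this holds identically we obtain $h_{v+1}=-s\,g_{v+1}$, the third relation of $\natural$, and the whole curve is singular, so $\Sing(Y)\cap U_4$ is non-isolated; otherwise $\sigma\gamma_1+\eta_1$ has finitely many zeros and the locus is isolated. This establishes ``isolated unless $\natural$'' in the case $wt_v(x^2)=wt_v(\varphi)$.

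The case $wt_v(x^2)=wt_v(\varphi)+1$ is where I expect the genuine difficulty, and it is the step I would handle most carefully. Now $\partial_x\tilde\varphi=2xu$ vanishes identically on $\{u=0\}$ and no longer forces $x=0$; instead $x$ re-enters through $\partial_u\tilde\varphi|_{u=0}=x^2+y\gamma_1+\eta_1$, since $x^2u$ and $u\,h_{v+1}$ carry the same power of $u$. The $(y,z)$-analysis through $\Delta$ is unchanged, so the discriminant dichotomy and the first two relations of $\natural$ appear exactly as before; the delicate point is that over the curve $y=\sigma$ the last equation only constrains $x$ by $x^2=-(\sigma\gamma_1+\eta_1)$, which always has solutions, so here the first two relations of $\natural$ already force a positive-dimensional locus. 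I would reconcile this with the uniform statement by recording that those two relations make the leading form $y^3+g_vy+h_v=(y-s)^2(y+2s)$ reducible, hence $E$ reducible -- a configuration excluded once the blowup is required to be a divisorial contraction via Theorem \ref{terminal}. Thus under the standing irreducibility of $E$ the case $\Delta\equiv0$ cannot occur when $wt_v(x^2)=wt_v(\varphi)+1$, and $\Sing(Y)\cap U_4$ is isolated, consistently with $\natural$.
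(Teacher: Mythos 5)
Your treatment of the main case $wt_v(x^2)=wt_v(\varphi)$ is the paper's argument in different clothing: the paper restricts to $\{x=u=0\}$ and studies the system $\Sigma$ given by $y^3+yg_v+h_v=3y^2+g_v=yg_{v+1}+h_{v+1}=0$, noting that $\Sigma$ is finite unless $g_v=-3s^2$ and $y\mp s$ divides all three polynomials; your discriminant $\Delta=-4\gamma^3-27\eta^2$ is exactly the criterion for the first two polynomials to share a root identically in $z$, so the two dichotomies coincide, and your extra verification that the $\partial_z$-equation is automatic on $y=\sigma$ is consistent with the paper simply omitting that equation from $\Sigma$. Where you genuinely depart from the paper is the case $wt_v(x^2)=wt_v(\varphi)+1$: the paper disposes of it in one line by invoking Lemma \ref{nonsing} on the chart $U_1$ to get $\Sing(Y)\cap U_4\subset(x=0)$, after which both cases are treated uniformly, whereas you keep $x$ free, observe that $\partial_u\tilde\varphi|_{u=0}$ acquires the term $x^2$, and note that in the degenerate configuration $g_v=-3s^2$, $h_v=2s^3$ the equation $x^2=-(\sigma\gamma_1+\eta_1)$ produces a curve of singular points with $x\ne0$ that $\natural$ does not predict, which you then exclude by irreducibility of $E$. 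Your caution is justified: Lemma \ref{nonsing} is proved by writing $\tilde\varphi=x_1+\text{others}$ on $U_1$, but the ``others'' may still involve $x_1$, and precisely in the degenerate configuration one can exhibit a singular curve in $U_1$ away from $Q_1$ (for instance $\varphi=x^2+y^3-3z^2u^2y+2z^3u^3+z^5+u^{11}$ with $v=(5,3,2,1)$ is singular along $\{u=0,\,y=z,\,x^2+z^5=0\}$ in $U_4$, while $\natural$ fails because $h_{v+1}=z^5\ne -sg_{v+1}=0$); in that configuration the leading form is $(\bfy-s)^2(\bfy+2s)$ and $E$ is reducible, so the paper's one-line reduction is tacitly using the same hypothesis you make explicit. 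Both arguments are sound in every application, since the weights are always chosen so that $E$ is irreducible; what your version buys is an honest accounting of that hypothesis (which the lemma's statement omits and which is genuinely needed when $wt_v(x^2)=wt_v(\varphi)+1$), at the cost of importing into the lemma an assumption that formally belongs to Theorem \ref{terminal}.
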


\begin{proof}
 If $2 wt_v(x)= wt_v(\varphi)$, we have $\tilde{\varphi}_x=2x$.
 If $2 wt_v(x)= wt_v(\varphi)+1$, then $Y \cap U_1$ is non-singular away from $Q_1$ by Lemma \ref{nonsing}.
 Hence we have $\Sing(Y) \cap U_4 \subset (x=0)$ in both cases.
 Moreover, $\Sing(Y) \subset E$, hence we have $\Sing(Y) \cap U_4 \subset (u=0)$.

 Therefore, we have
$$\begin{array}{ll} \Sing(Y) \cap U_4 &\subset (x=u=0) \cap (\tilde{\varphi}=\tilde{\varphi}_y =\tilde{\varphi}_u=0) \\ & \subset (x=u=0) \cap \Sigma, \end{array}$$
where $\Sigma$ is defined as $$\left\{ \begin{array}{l}
y^3+yg_v+h_v=0,\\
3y^2+g_v=0,\\
yg_{v+1}+h_{v+1}=0.
\end{array} \right.
$$

If $g_v$ is not a perfect square, then $3y^2+g_v$ is irreducible
and hence $\Sigma$ is finite. If $g_v$ is a perfect square, then
we write it as $g_v=-3 s^2$. One sees that $\Sigma$ is finite
unless $y-s$ or $y+s$ divides the above three polynomials. The
statement now follows.
\end{proof}

\begin{lem} \label{isolated1} Suppose more generally that
$$ \varphi=x^2+y^3+s(z,u) y^2 + y g_v+ y
g_{v+1}+yg_>+h_v+h_{v+1}+h_{>}.$$ Suppose that $wt_v(x^2)=
wt_v(\varphi)$  and $wt_v(y^3)=wt_v(\varphi)+1$. Then $\Sing(Y)
\cap U_4$ is isolated UNLESS  $g_v=h_v=h_{v+1}=0$.
\end{lem}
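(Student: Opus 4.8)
The plan is to run the argument of Lemma \ref{isolated} with one structural modification: since now $wt_v(y^3)=wt_v(\varphi)+1$, on $U_4$ the cube $y^3$ carries a factor of $u$ and therefore drops out along $E$, so the leading behaviour in the $y$-direction on the exceptional divisor is controlled by the quadratic coming from $s(z,u)y^2$ rather than by the cubic $y^3+g_vy+h_v$ of Lemma \ref{isolated}. Throughout I write $s_v,g_v,h_v$ (resp. $s_{v+1},g_{v+1},h_{v+1}$) for the chart restrictions $s_v(z,1),g_v(z,1),h_v(z,1)$, etc., and recall from \ref{u4} that the higher terms $\widetilde{g_{>}},\widetilde{h_{>}}$ are divisible by $u^2$, so they contribute neither to $\tilde\varphi$ nor to $\tilde\varphi_u$ along $E$.

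First I would reduce to the locus $\{x=u=0\}$. Because $wt_v(x^2)=wt_v(\varphi)$ we have $\tilde\varphi_x=2x+\cdots$, so $\Sing(Y)\cap U_4\subset(x=0)$; and since $Y\to X$ is an isomorphism away from $P$, $\Sing(Y)\subset E=(u=0)$. Reading off $\tilde\varphi$ from \ref{u4} and setting $x=u=0$, the conditions $\tilde\varphi=\tilde\varphi_y=\tilde\varphi_u=0$ become
\[
\begin{cases}
s_v y^2+g_v y+h_v=0,\\
2s_v y+g_v=0,\\
y^3+s_{v+1}y^2+g_{v+1}y+h_{v+1}=0,
\end{cases}
\]
and $\Sing(Y)\cap U_4$ is contained in the zero set $\Sigma$ of this system, so it suffices to decide when $\Sigma$ is finite. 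The equation $\tilde\varphi_z=0$ adds nothing to the detection of curve components: along any one-dimensional component of $\{\tilde\varphi=\tilde\varphi_y=0\}$, differentiating $\tilde\varphi=0$ in $z$ and using $\tilde\varphi_y=0$ forces $\tilde\varphi_z=0$ automatically.

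The monic cubic in the third equation forbids any component of $\Sigma$ supported over a single value of $z$, so every positive-dimensional component dominates the $z$-line. I would then eliminate $y$ between the first two equations: their resultant is, up to a unit, $s_v(4s_vh_v-g_v^2)$. In the regime where this lemma is applied, $s_v\not\equiv0$, and then the resultant is nonzero unless $4s_vh_v=g_v^2$, that is, unless $s_vy^2+g_vy+h_v=s_v(y-\gamma)^2$ is a perfect square with $\gamma=-g_v/2s_v$. When it is not a perfect square, $\{\tilde\varphi=\tilde\varphi_y=0\}$ is already finite and we are done. When it is a perfect square, $\{\tilde\varphi=\tilde\varphi_y=0\}$ contains the graph $\Gamma=\{y=\gamma\}$, and $\Sigma$ fails to be finite only if $\Gamma$ also lies on the cubic $\tilde\varphi_u=0$. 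The degenerate instance $g_v=h_v=0$ gives $\gamma=0$ and $\Gamma=\{y=0\}$, along which $\tilde\varphi_u=h_{v+1}$; thus the $z$-axis $\{x=y=u=0\}$ is a curve of singular points exactly when $g_v=h_v=h_{v+1}=0$, which is the asserted exceptional case, and otherwise $\Sigma$ is finite.

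The one point that requires care, and which I expect to be the main obstacle, is the perfect-square case with $g_v\neq0$: a priori $\Gamma=\{y=\gamma\}$ with $\gamma\neq0$ could lie on $\tilde\varphi_u=0$ identically, producing a non-isolated singular locus without $g_v=h_v=h_{v+1}=0$ and so threatening the clean form of the conclusion. I would dispose of this by completing the square: since $s_v(y-\gamma)^2$ is the leading quadratic, the translation $y\mapsto y-\gamma$ performed on $\varphi$ itself (legitimized by Artin approximation, exactly as in the normal-form lemma) returns the equation to the same shape but with $g_v=h_v=0$. Hence this configuration is already subsumed under the stated exception and introduces no genuinely new non-isolated possibility, and the exceptional locus is pinned down to precisely $g_v=h_v=h_{v+1}=0$.
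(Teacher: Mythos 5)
Your reduction to $\{x=u=0\}$ and the resulting three equations are set up correctly, but you never use the hypothesis $wt_v(y^3)=wt_v(\varphi)+1$ in the way that makes the paper's proof work, and this leaves a real gap. The paper's argument is: since $y^3\in\varphi$ has $v$-weight $wt_v(\varphi)+1$, Lemma \ref{nonsing} makes $Y\cap U_2$ non-singular away from $Q_2$; any point of $E\cap U_4$ whose $y$-coordinate is nonzero lies in $U_2$, so $\Sing(Y)\cap U_4\subset(x=y=u=0)$. Once $y$ is killed, the conditions $\tilde\varphi=\tilde\varphi_y=\tilde\varphi_u=0$ collapse to $h_v(z,1)=g_v(z,1)=h_{v+1}(z,1)=0$, three polynomials in the \emph{single} variable $z$, and finiteness is immediate unless all three vanish identically. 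You instead keep $y$ as a free variable, which forces the resultant/perfect-square case analysis and creates the problems below.

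Concretely: (i) your premise that $s_v\not\equiv 0$ ``in the regime where this lemma is applied'' is false --- the lemma is invoked with $s\equiv 0$ in most of its applications (e.g.\ Case 3 of the $cE_6$ proof, Cases 3 and 6 of $cE_7$, Cases 5 and 7 of $cE_8$), and the statement assumes nothing about $s$; with $s_v=0$ your resultant $s_v(4s_vh_v-g_v^2)$ vanishes identically and the dichotomy collapses. (ii) In the configuration $s_v=g_v=h_v=0$, $h_{v+1}\neq 0$, your set $\Sigma$ is the plane curve $y^3+s_{v+1}y^2+g_{v+1}y+h_{v+1}=0$, which is positive-dimensional; since, as you correctly note, $\tilde\varphi_z$ adds nothing on this locus, your method cannot deliver the isolatedness that the lemma asserts here --- only the $y=0$ reduction rules these points out. (iii) In the perfect-square case with $g_v\neq 0$ you do not prove isolatedness; you observe that a translation in $y$ moves the data into the exceptional shape and declare the case ``subsumed under the stated exception.'' That is logically backwards: the lemma claims $\Sing(Y)\cap U_4$ \emph{is} isolated whenever $(g_v,h_v,h_{v+1})\neq(0,0,0)$, so this case must be shown to be isolated (which the $y=0$ reduction does at once, since $g_v(z,1)\not\equiv 0$ already confines $z$ to finitely many values), not relabelled as exceptional; moreover the translation by $\gamma=-g_v/2s_v$ is in general not a legitimate weight-compatible coordinate change.
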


\begin{proof}
 Since $wt_v(y^3)= wt_v(\varphi)+1$, then $Y \cap U_2$ is non-singular away from $Q_2$.
 Hence we have $\Sing(Y) \cap U_4 \subset (x=y=0)$.
 Moreover, $\Sing(Y) \subset E$, hence we have $\Sing(Y) \cap U_4 \subset (u=0)$.

Therefore, we have
$$\begin{array}{ll} Siny(Y) \cap U_4 &\subset (x=y=u=0) \cap (\tilde{\varphi}=\tilde{\varphi}_y =\tilde{\varphi}_u=0) \\ & \subset (x=y=u=0) \cap ( h_v(z,1)=g_v(z,1)=\tilde{\varphi}_u=0) \\ & \subset (x=y=u=0) \cap ( h_v(z,1)=g_v(z,1)=h_{v+1}(z,1)=0). \end{array}$$
The statement now follows.
\end{proof}





\end{setup}

\begin{setup} \label{u4}
We study the most common case that
$$  wt_v(x^2)=wt_v(y^3)=wt_v(\varphi). $$\end{setup}

Suppose furthermore that $\natural$ does not hold, then $\Sing(Y)
\cap U_4$ is isolated. We now study the possible type of these
singularities.

Notice that we have at least one of $g_v,h_v, h_{v+1}$ is
non-zero, otherwise $\natural$ holds.

\noindent
{\bf Case 1.} $h_v \ne 0$.\\
We write $$h_v(z,u)=\lambda  z^{m'} u^{n'} \prod (z-\alpha_t u^k
)^{l'_t}.$$ Then $$k \cdot \tau(h) \ge wt_v(h_\tau) \ge
wt_v(\varphi) =n'+k(m'+\sum l'_t ). \eqno{\dag_h}$$ In particular,
$$ \tau(h) \ge m'+\sum l'_t.$$

  Then $E \cap U_4$ is
defined by $$\begin{array}{l} (x^2+y^3+ y g_v(z,1)+h_v(z,1)=0)
\\
=(x^2+y^3+ y g_v(z,1)+\lambda' z^{m'} \prod (z-\alpha'_t
)^{l'_t}=0) \subset U_4 \cong \bC^4,
\end{array}$$ which is irreducible.


It is easy to see that
\begin{itemize}
\item if $m'+\sum l'_t \le 2$ then $E \cap U_4$ is  at worst Du
Val of $A$-type and  hence $\Sing(Y) \cap U_4$ is at worst of $cA$
type;

\item if $m'+\sum l'_t = 3$ then $E \cap U_4$ is  at worst Du Val
of $D$-type and  hence $\Sing(Y) \cap U_4$ is at worst of $cD$
type;

\item if $m'+\sum l'_t = 4$ then $E \cap U_4$ is  at worst Du Val
of $E_6$-type and  hence $\Sing(Y) \cap U_4$ is at worst of $cE_6$
type;

\item if $m'+\sum l'_t = 5$ then $E \cap U_4$ is  at worst Du Val
of $E_8$-type and  hence $\Sing(Y) \cap U_4$ is at worst of $cE_8$
type;
\end{itemize}

Notice also that if $P \in X $ is of type $cE_6$ (resp. $cE_8$),
then $\Sing(Y) \cap U_4$ is at worst of $cE_6$ (resp. $cE_8$).

In any event, $Y \cap U_4$ is terminal. By Theorem \ref{terminal},
$Y \to X$ is a divisorial contraction with discrepancy $1$.

\noindent
{\bf Case 2.} $g_v \ne 0$.\\
We write $$g_v(z,u)=\lambda z^m u^n \prod (z-\alpha_t
u^k)^{l_t}.$$ Then $$k \cdot \tau(g) \ge wt_v(g_\tau) \ge
wt_v(\varphi)-b =n+k(m+\sum l_t ). \eqno{\dag_g}$$ In particular,
$$ \tau(g) \ge m+\sum l_t.$$

Then $E \cap U_4$ is defined by $$\begin{array}{l} (x^2+y^3+ y
g_v(z,1)+h_v(z,1)=0)
\\
=(x^2+y^3+ \lambda y z^m \prod (z-\alpha_t )^{l_t}+h_v(z,1)=0)
\subset U_4 \cong \bC^4,
\end{array}$$ which is irreducible.

It is easy to see that
\begin{itemize}
\item if $m+\sum l_t \le 1$ then $E \cap U_4$ is  at worst Du Val
of $A$-type and  hence $\Sing(Y) \cap U_4$ is at worst of $cA$
type;

\item if $m+\sum l_t = 2$ then $E \cap U_4$ is  at worst Du Val of
$D$-type and  hence $\Sing(Y) \cap U_4$ is at worst of $cD$ type;

\item if $m+\sum l_t = 3$ then $E \cap U_4$ is  at worst Du Val of
$E_7$-type and  hence $\Sing(Y) \cap U_4$ is at worst of $cE_7$
type.

\end{itemize}

Notice also that if $P \in X $ is of type $cE_7$, then $\Sing(Y)
\cap U_4$ is at worst of $cE_7$.

\noindent
{\bf Case 3.} $h_v =0, h_{v+1} \ne 0$.\\
We write
$$h_{v+1}(z,u)=\lambda''  z^{m''} u^{n''} \prod (z-\alpha_t u^k )^{l''_t}.$$
Then $$k \cdot \tau(h) \ge wt_v(h_\tau) \ge wt_v(\varphi)
=n''+k(m''+\sum l''_t )-1. \eqno{\dag'_h}$$ In particular, if
$k>1$, then we still have
$$ \tau(h) \ge m''+\sum l''_t.$$
The same conclusion as in Case 1 still holds.

In any event, $Y \cap U_4$ is terminal. By Theorem \ref{terminal},
$Y \to X$ is a divisorial contraction.

As a summary, we conclude that
\begin{thm} \label{divcontr}
Given $P\in X$ a $cE$ point defined by $(\varphi: x^2+y^3+y
g(z,u)+h(z,u)=0)$. Let $Y \to X$ be a weight blowup with weight
$v=(a,b,k,1)$ that $k>1$. Suppose that $wt_v ( x^2)=wt_v
(y^3)=wt_v(\varphi)$ and $\natural$ does not hold.  Then $Y \to X$
is a divisorial contraction. Also, any  singularity on $Y \cap
U_4$ is at worst of type $cE_6$ (resp. $cE_7, cE_8$) if $P \in X$
is of type $cE_6$ (resp. $cE_7, cE_8$).
\end{thm}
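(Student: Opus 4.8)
The statement packages the chart computation carried out just above into the hypotheses of Theorem~\ref{terminal}, so the plan is to verify three things: that $E$ is irreducible, that $E\cap U_4$ is at worst Du Val, and that its Du Val type is controlled by $\tau(g)$ and $\tau(h)$. First I would record irreducibility of $E$. Since $wt_v(x^2)=wt_v(y^3)=wt_v(\varphi)$, the weight-$wt_v(\varphi)$ part defining $E$ is $\varphi_h=x^2+y^3+yg_v+h_v$, which is $x^2$ plus a genuine cubic in $y$; a cubic in $y$ is never a perfect square, so $\varphi_h$ is irreducible and hence $E=(\varphi_h=0)\subset\bP(v)$ is irreducible.

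Next I would invoke Lemma~\ref{isolated}: its hypotheses $wt_v(x^2)=wt_v(\varphi)$ and $wt_v(y^3)=wt_v(\varphi)$ hold, and since $\natural$ is assumed to fail, $\Sing(Y)\cap U_4$ is isolated. I would then read off the type of the affine surface $E\cap U_4=(x^2+y^3+yg_v(z,1)+h_v(z,1)=0)$ using the trichotomy established in Cases~1--3 above. When $h_v\ne 0$ one factors $h_v(z,1)$ and the multiplicity sum $m'+\sum l'_t$ makes $E\cap U_4$ Du Val of type $A$, $D$, $E_6$ or $E_8$; when $g_v\ne 0$ one factors $g_v(z,1)$ and $m+\sum l_t$ gives type $A$, $D$ or $E_7$; the residual case $h_v=0\ne h_{v+1}$ reduces to the first. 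In every case $E\cap U_4$ is Du Val, so $Y\cap U_4$ is terminal.

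With $E$ irreducible, discrepancy equal to $1$ by the normalization $wt_v(\varphi)=a+b+k-1$, and $E\cap U_4$ Du Val, Theorem~\ref{terminal} applies and yields at once that $Y\to X$ is a divisorial contraction and that a point of $\Sing(Y)_{\ind=1}\cap U_4$ lying over a Du Val point of type $A$, $D$, $E_6$, $E_7$, $E_8$ is at worst $cA$, $cD$, $cE_6$, $cE_7$, $cE_8$ respectively. For the ``moreover'' clause I would use the weight inequalities $\dag_h$, $\dag_g$, $\dag'_h$, which bound $\tau(h)$ (resp. $\tau(g)$) below by $m'+\sum l'_t$ (resp. $m+\sum l_t$): for a $cE_6$ point $\tau(h)=4$ forces type at worst $E_6$ in the $h$-governed cases, for a $cE_8$ point $\tau(h)=5$ forces type at worst $E_8$, and for a $cE_7$ point $\tau(g)=3$ forces type at worst $E_7$ in the $g$-governed case.

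The only real content, and the main obstacle, is this last type bound. Matching a factorization multiplicity to a Du Val type is routine, but one must be sure the estimate cannot jump to a strictly worse $E$-singularity; for $cE_7$ in particular the $h$-governed list passes directly from $E_6$ to $E_8$, so one has to confirm that a $cE_7$ point genuinely falls under the $g_v$-governed estimate (this is where $\tau(g)=3$ and the choice of weight enter). The hypothesis $k>1$ is used precisely to make $\dag'_h$ deliver $\tau(h)\ge m''+\sum l''_t$ in the case $h_v=0$, which is why it appears in the statement.
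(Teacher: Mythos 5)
Your argument is precisely the paper's own: Theorem~\ref{divcontr} is stated as a summary of the Cases 1--3 analysis immediately preceding it (irreducibility of $x^2+y^3+yg_v+h_v$ as a quadratic in $x$ whose constant term is cubic in $y$, isolatedness of $\Sing(Y)\cap U_4$ via Lemma~\ref{isolated} since $\natural$ fails, the factorization bounds $\dag_h$, $\dag_g$, $\dag'_h$ relating multiplicities to $\tau(h)$ and $\tau(g)$, and then Theorem~\ref{terminal}), and you reproduce that route faithfully. The $cE_7$ subtlety you flag is genuine and is handled exactly as you guess: the paper's Case 1 ($h_v\ne 0$) asserts the conclusion only for $cE_6$ and $cE_8$, the $cE_7$ bound is claimed only in Case 2 ($g_v\ne 0$), and in every application of this theorem to a $cE_7$ point the weight is chosen so that $z^3$ contributes to $g_v$ (or the chart is checked directly), so the type cannot jump past $E_7$.
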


\begin{rem} \label{Q4} Consider the case that $P \in X$ is of type $cE_6$.
Suppose the worst case that $Y$ has a singularity $R$ of type
$cE_6$. This happens only when $h_v=z^{4}$ or $h_v= (z-\alpha_t u
)^{4}$ for $m'+\sum l'_t \le 4$. If $h_v=(z-\alpha'_tu^k)^{4}$. By
considering  the weight-invariant coordinate change that
$\bar{z}=z-\alpha'_tu^k$, we may and do assume that $R=Q_4$ and
$Q_4$ is the unique singularity in $U_4$.

We can make the same assumption if $P \in X$ is of type $cE_7$,
$cE_8$.
\end{rem}


\begin{prop} \label{x0y0} Let  $Y \to X$ be a weighted blowup of a
$cE $ point with weight $v=(a,b,k,1)$. Suppose  that
$wt_v(x^2)=wt_v(y^3)=wt_v(\varphi)$. If any one of $g_v=0$,
$h_v=0$,  $n
>0$,  or $n' >0$ holds,
 then $\Sing(E) \cap U_2-U_4=\emptyset$.

In particular, if $\natural$ does not hold and $v=v_{30}, v_{24},
v_{18}, v_{12}$, then $Y \to X$ is a divisorial contraction and
$\Sing(Y)_{\ind=1}  \subset U_4$.
\end{prop}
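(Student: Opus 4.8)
The plan is to reduce the whole statement to a single Jacobian computation on the chart $U_2$, in the spirit of Lemma \ref{key}. Write $d = wt_v(\varphi) = wt_v(x^2) = wt_v(y^3)$, so that $E = (x^2 + y^3 + y\,g_v + h_v = 0) \subset \bP(v)$. On the slice $y = 1$ the \'etale cover $\rho_2 \colon \bC^3 \to U_2$ presents $\rho_2^{-1}(E)$ as the affine hypersurface $(\xi^2 + 1 + g_v(\zeta,\upsilon) + h_v(\zeta,\upsilon) = 0)$, where $(\xi,\zeta,\upsilon)$ are the images of $(x,z,u)$, and the locus $U_2 - U_4$ is exactly $\{\upsilon = 0\}$. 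As in Lemma \ref{key}, an index-$1$ singular point of $E$ in $U_2$ pulls back to a singular point of $\rho_2^{-1}(E)$; thus it suffices to show that $\rho_2^{-1}(E)$ is smooth along $\upsilon = 0$.

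First I would write down the singularity equations on $\{\upsilon = 0\}$. From $\partial_\xi = 2\xi = 0$ one gets $\xi = 0$, so the hypersurface equation becomes $1 + g_v(\zeta,0) + h_v(\zeta,0) = 0$, while $\partial_\zeta$ yields $\partial_\zeta\big(g_v + h_v\big)(\zeta,0) = 0$. The crucial observation is that, because $g_v$ and $h_v$ are $v$-homogeneous, setting $u = 0$ keeps only the pure power of $z$: each of $g_v(\zeta,0)$, $h_v(\zeta,0)$ is either zero or a single monomial $\mu\zeta^r$, resp. $\lambda\zeta^s$, whose exponent $r = 2b/k$, resp. $s = 3b/k$, satisfies $r \ge \tau(g) \ge 3$, resp. $s \ge \tau(h) \ge 4$; in particular the exponents are positive.

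Now I would invoke the hypothesis. Any one of $g_v = 0$, $h_v = 0$, $n > 0$, $n' > 0$ forces one of $g_v(\zeta,0)$, $h_v(\zeta,0)$ to vanish, since $n > 0$ means $u \mid g_v$ and $n' > 0$ means $u \mid h_v$. Say $g_v(\zeta,0) = 0$, the other case being symmetric. If $h_v(\zeta,0) = 0$ as well, the equation reads $1 = 0$, impossible. Otherwise $h_v(\zeta,0) = \lambda\zeta^s$ with $\lambda \ne 0$ and $s \ge 1$, and the conditions $1 + \lambda\zeta^s = 0$ and $\lambda s\,\zeta^{s-1} = 0$ are incompatible, the first forcing $\zeta \ne 0$ and the second then forcing $\zeta = 0$. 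Hence $\rho_2^{-1}(E)$ has no singular point with $\upsilon = 0$, which gives $\Sing(E) \cap (U_2 - U_4) = \emptyset$.

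For the ``in particular'' clause I would first verify the hypothesis for each listed weight by a divisibility test: with $v_l = (a,b,k,1)$, $2a = 3b = l$, $k = l - a - b + 1$, one has $g_v(\zeta,0) \ne 0$ only if $k \mid 2b$ and $h_v(\zeta,0) \ne 0$ only if $k \mid 3b$. For $v_{12} = (6,4,3,1)$, $v_{24} = (12,8,5,1)$, $v_{30} = (15,10,6,1)$ one checks $k \nmid 2b$, so $g_v(\zeta,0) = 0$; for $v_{18} = (9,6,4,1)$ one checks $k \nmid 3b$, so $h_v(\zeta,0) = 0$. In every case the first part applies. Since moreover $\natural$ fails, $k > 1$, and $wt_v(x^2) = wt_v(y^3) = wt_v(\varphi)$, Theorem \ref{divcontr} shows $Y \to X$ is a divisorial contraction. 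Finally $\Sing(Y)_{\ind=1} \subset \Sing(E)$ by Lemma \ref{key}(2); $\Sing(Y) \cap U_1 = \emptyset$ by Lemma \ref{u1}; the first part kills $U_2 - U_4$; and the only point of $E$ in $U_3$ lying outside $U_1 \cup U_2 \cup U_4$ is $Q_3$, of index $k > 1$. Therefore every index-$1$ singularity of $Y$ lies in $U_4$. The step I expect to require the most care is the chart bookkeeping of the second paragraph, namely confirming that $g_v(\zeta,0)$ and $h_v(\zeta,0)$ are genuinely single monomials of positive degree, together with the observation that the quotient-singular points of $\bP(v)$ meeting $E$ in $U_2 - U_4$ are of index $> 1$ and hence irrelevant to the index-$1$ conclusion we need.
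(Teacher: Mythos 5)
Your argument is correct and reaches the same conclusions by the same overall route (a Jacobian computation on the chart $U_2$, then the identical bookkeeping for the ``in particular'' clause), but the key computation is packaged differently. The paper exploits the $v$-homogeneity of $g_v$ and $h_v$ through an Euler-type identity: it forms $\psi_1:=wt_v(\varphi)\tilde\phi-ax\tilde\phi_x-kz\tilde\phi_z-u\tilde\phi_u$, which collapses to $wt_v(\varphi)+b\,g_v$, so $g_v$ cannot vanish at a singular point of $E\cap U_2$; combined with $u\mid g_v$ when $n>0$ (and the analogous $\psi_2$ for $h_v$ when $h_v=0$ or $n'>0$), this forces $u\neq 0$ at every such point. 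You instead restrict to the slice $u=0$ from the outset and use the equivalent consequence of homogeneity, namely that $g_v(z,0)$ and $h_v(z,0)$ are each zero or a single monomial $\lambda z^s$ with $s\geq 1$, so that $1+\lambda z^s=0$ and its $z$-derivative are incompatible. The two derivations are two faces of the same homogeneity; yours is marginally more elementary (no Euler operator, but you must justify the monomial claim, which you do), while the paper's linear combination yields in one stroke the slightly stronger statement that $\Sing(E)\cap U_2$ is entirely empty when $g_v=0$ or $h_v=0$. Your treatment of the second assertion --- the divisibility checks $k\nmid 2b$ for $v_{12},v_{24},v_{30}$ and $k\nmid 3b$ for $v_{18}$, Theorem \ref{divcontr} for the contraction, Lemma \ref{u1} for $U_1$, and the observation that the only point of $E$ outside $U_1\cup U_2\cup U_4$ is $Q_3$, of index $k>1$ --- coincides with the paper's, and your explicit remark that quotient points of $\bP(v)$ are irrelevant to the index-$1$ conclusion is a caveat the paper leaves implicit.
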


\begin{proof}  In affine
coordinate $U_2$, $E$ is defined by
$$\tilde{\phi}: x^2+1+ g_v(z,u)+h_{v}(z,u)=0,$$ with
$g_v(z,u)$ (resp. $h_v(z,u)$) being homogeneous with respect to
the weight $v$ of weight $wt_v(\varphi)-b$ (resp.
$wt_v(\varphi)$). It follows that $$\left\{ \begin{array}{l} kz
\frac{\partial g_v(z,u)}{\partial z}+u \frac{\partial
g_v(z,u)}{\partial u} = (wt_v(\varphi)-b)\cdot g_v(u,v), \\
kz \frac{\partial h_v(z,u)}{\partial z}+u \frac{\partial
h_v(z,u)}{\partial u} = wt_v(\varphi)\cdot h_v(u,v)
\end{array} \right.$$

 It follows that  $$\psi_1:=wt_v(\varphi)
\tilde{\phi}-a x\tilde{\phi}_{x}-k
z\tilde{\phi}_{z}-u\tilde{\phi}_{u}=wt_v(\varphi)+bg_v(u,v),$$
where $g_v(u,v)=\lambda z^m u^n \prod (z-\alpha_t u^k )^{l_t} $.
Note that $\psi_1$ must be satisfied at any singular point
$\Sing(E) \cap U_2$.
 If $\lambda=0$, then one sees that $\Sing(E) \cap U_2=
 \emptyset$.

If $n >0$, then $\Sing(E) \cap U_2 \not \subset U_4$
 otherwise $u=0$ will leads to a contradiction.

If we consider $$\psi_2:=(wt_v(\varphi)-b) \tilde{\phi}-(a-b/2)
x\tilde{\phi}_{x}-k z\tilde{\phi}_{z}-u\tilde{\phi}_{u}$$
$$=(wt_v(\varphi)-b)-bh_v(z,u),$$ where $h_v(u,v)= \lambda' z'^m u'^n \prod (z-\alpha'_t u^k
)^{l'_t}$. Then one sees similarly that $\Sing(E) \cap
U_2=\emptyset$ if $\lambda'=0$ and $ \Sing(E) \cap U_2 \not
\subset U_4$ if  $n'>0$.

We now prove the second statement. Since $\natural$ does not hold,
hence $Y \to X $ is a divisorial contraction. Therefore,
$\Sing(Y)_{\ind=1} \subset \Sing(E)$. We have that $\Sing(E) \cap
U_1=\emptyset$.  Notice that either $g_v=0$ or $n>0$ for $v_{12},
v_{24}, v_{30}$. Also one has either $h_v=0$ or $n'>0$ for
$v_{18}$. Therefore, $\Sing(Y)_{\ind=1} \cap (U_1 \cup U_2 \cup U_4)
\subset U_4$. Finally $Q_3$ is of index $>1$. This completes the
proof.
\end{proof}



\subsection{Resolution of $cE_6$ points}

In this subsection, we shall prove that
\begin{thm} \label{cE6}
There is a feasible resolution for any $cE_6$ singularity.
\end{thm}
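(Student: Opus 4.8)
The plan is to prove Theorem \ref{cE6} by induction, following the general strategy set up in the preparation subsection, combined with the hierarchy established in the earlier sections. Since $cE_6$ sits above $cA$, $cA/r$, $cD$, $cAx/2$, $cAx/4$, $cD/2$, $cD/3$ in the hierarchy (items 1--5 versus item 6), I may freely assume that feasible resolutions exist for all of those milder types. The only genuinely new work is to produce, for a given $cE_6$ point $P \in X$, a weighted blowup $Y \to X$ with discrepancy $1$ such that every singularity appearing on $Y$ is either strictly milder than $cE_6$ (hence resolved by previous results) or is again a $cE_6$ point but with a strictly smaller value of some carefully chosen numerical invariant, so that induction closes.

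Concretely, I would start from the normal form dictated by the Notations and Conventions, where $\tau(h)=4$, $\tau(g)\ge 3$, and $z^4 \in h_4$ with $h_4$ in one of the five listed forms. First I would pick a convenient weight $v=(a,b,k,1)$ with $k>1$ and $wt_v(x^2)=wt_v(y^3)=wt_v(\varphi)$, so that Theorem \ref{divcontr} applies: provided the degenerate condition $\natural$ does not hold, $Y \to X$ is automatically a divisorial contraction with discrepancy $1$, and every singularity of $Y \cap U_4$ is at worst $cE_6$. The natural candidate weights are the $v_l$ appearing in Proposition \ref{x0y0}, namely $v_{30},v_{24},v_{18},v_{12}$, since there Proposition \ref{x0y0} guarantees in addition that $\Sing(Y)_{\ind=1}\subset U_4$, collapsing the whole index-$1$ singular locus into the single chart $U_4$ where Theorem \ref{divcontr} gives us control of the type. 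I would split into cases according to the shape of $h_4$ (and of $g_3$ when $\tau(g)=3$), choosing the weight so that the factorization exponents $m'+\sum l'_t$ governing the Du Val type of $E\cap U_4$ drop below $4$ whenever possible, thereby producing only $cA$, $cD$, or milder points on $Y$.

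The main obstacle, as Remark \ref{Q4} flags, is precisely the resistant case where $h_v=(z-\alpha'_t u^k)^4$ (equivalently $m'+\sum l'_t = 4$), so that $E\cap U_4$ remains $E_6$ Du Val and $Y$ inherits a genuine $cE_6$ point $R=Q_4$. Here induction on type fails and I must instead exhibit a strictly decreasing integer invariant. The candidate is $\tau^*(\varphi)$, the minimal $u$-degree among terms $y^iz^ju^p$ with $i+j\le 1$, which is finite because $P$ is isolated; after the weight-invariant coordinate change $\bar z = z - \alpha'_t u^k$ centering the singularity at $Q_4$, I would show by direct bookkeeping on the transformed equation $\tilde\varphi$ in $U_4$ (tracking how the $u$-exponents of the surviving $yu^p$, $zu^p$, $u^p$ terms change under $\wBl_v$) that $\tau^*(Q_4) < \tau^*(P)$, while $Q_4$ stays $cE_6$ in normal form. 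Thus the persistent-$cE_6$ branch terminates by induction on $\tau^*$, and all other branches terminate by induction on type via the earlier sections. Combining the case analysis yields feasible resolutions for every $cE_6$ point, which is the assertion of the theorem.
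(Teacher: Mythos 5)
Your skeleton matches the paper's for the top stratum: when $wt_{v_{12}}(f)\ge 12$ with $v_{12}=(6,4,3,1)$, the paper also invokes Theorem \ref{divcontr} and Proposition \ref{x0y0} (here $z^4\in h_v$ rules out $\natural$), isolates the only possibly persistent $cE_6$ point at $Q_4$ via Remark \ref{Q4}, and terminates that branch by induction on $\tau^*$. But there is a genuine gap in your treatment of everything below that stratum. First, a small point: of the weights $v_{30},v_{24},v_{18},v_{12}$ you name, only $v_{12}$ is ever usable for a $cE_6$ point, since $z^4\in h_4$ forces $wt_{v_l}(\varphi)\le 4k<l$ for $l=18,24,30$. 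More seriously, the induction on $\tau^*$ does not close the argument by itself: it only reduces you to $cE_6$ points with $wt_{v_{12}}(f)<12$, and for those no weight satisfying your standing hypothesis $wt_v(x^2)=wt_v(y^3)=wt_v(\varphi)$ with discrepancy $1$ and $k>1$ exists, so Theorem \ref{divcontr} and Proposition \ref{x0y0} are simply unavailable. Your proposal offers no replacement mechanism there beyond ``choosing the weight so that the factorization exponents drop below $4$,'' which is not something you can arrange by a choice of weight once $wt_{v_{12}}(f)<12$.

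The paper fills exactly this hole with a descending ladder of weights $v_8=(4,3,2,1)$, $v_6=(3,2,2,1)$, $v_4=(2,2,1,1)$, $v_5=(3,2,1,1)$, stratified by which $wt_{v_l}(f)\ge l$ holds. At these rungs the symmetric weight condition fails (e.g.\ $wt_{v_8}(y^3)=wt_{v_8}(\varphi)+1$, $wt_{v_4}(y^3)>wt_{v_4}(\varphi)$), so one must instead combine Lemma \ref{isolated1}, Lemma \ref{nonsing}, Lemma \ref{u1}, Corollary \ref{cAq} and Theorem \ref{terminal} directly, and track a term $\theta$ witnessing the weight drop to bound the type of the singularities on $U_4$. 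In particular the subcase where $h_4$ is a perfect square requires the coordinate change $\bar x=x-q(z,u)$ and the asymmetric weight $v_5$, and the $v_6$ case requires a separate argument when $\natural$ does hold. None of this is anticipated in your write-up, and since these base cases are what the $\tau^*$ induction ultimately lands on, the proof as proposed does not terminate.
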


\begin{proof}
We will need to consider weighted blowup $ Y \to X$ with the
following weights $v_{12}=(6,4,3,1)$, $v_8=(4,3,2,1)$,
$v_6=(3,2,2,1)$, $v_4=(2,2,1,1)$ and $v_5=(3,2,1,1)$. It is
sufficient to show that $\Sing(Y)_{\ind>1}$ is not of type $cE/2$
and there exists feasible resolution on $\Sing(Y)_{\ind=1}$.

\noindent
{\bf Case 1.} $wt_{v_{12}}(f) <12$, $wt_{v_{8}}(f) <8$ and $wt_{v_{6}}( f) < 6$. \\
\noindent {\bf Subcase 1-1.} $h_4$ is not a perfect square.

 We
consider the weighted blowup $Y \to X$ with weight
$v_{4}=(2,2,1,1)$. It is clear that $E$ is irreducible if $h_4$ is
not a perfect square.

Since  $wt_{v_{6}}( f) < 6$, we must have a term $\theta \in f$
such that  $wt_{v_{6}}( \theta) < 6$. One has that $\theta =yu^3$
or $\theta= u^5$.

\noindent
{\bf Claim.} $Y \to X$ is a divisorial contraction. \\
To see this, if $\theta=u^5$, then it follows that $Y \cap U_4$ is nonsingular and
thus $ Y \to X$ is a divisorial contraction by Theorem
\ref{terminal}. If $\theta=yu^3$, then $\natural$ does not hold and hence $Y \cap U_4$ has at worst singularities of type $cA$. Therefore,  $ Y \to X$ is a divisorial contraction by Theorem
\ref{terminal}.

Clearly, $Y \cap U_1$ is nonsingular. Moreover, $Y \cap U_2$ has
singularity of type $cAx/2$ at $Q_2$ and at worst of type $cA$ for
points other than $Q_2$. Since $z^4 \in h_4$, we have $Q_3 \not
\in Y$. Therefore, feasible resolution exists for this case.



\noindent {\bf Subcase 1-2.} $h_4$ is a perfect square, i.e.
$h_4=z^4$ or $z^4+2z^3u+z^2u^2$.

Since  $wt_{v_{6}}( f) < 6$, we  have either $yu^3$ or $u^5$ in
$f$.
 Write $h_4=-q(z,u)^2$. Consider the coordinate change
$\bar{x}:=x-q(z,u)$, we have
$$\bar{\varphi}:= \bar{x}^2+2 \bar{x} q(z,u)+y^3 + y g(z,u)+h_{\tau \ge 5}(z,u).$$
We consider weighted blowup with weight $v_5=(3,2,1,1)$ instead.
Note that we still have either $yu^3$ or $u^5 \in \bar{\varphi}$.

 By Lemma \ref{nonsing}, $Y \cap U_i$ is nonsingular away
from $Q_i$ for $i=1,2,4$ and $Q_1, Q_2$ are terminal quotient
singularity of index $3,2$ respectively.
 By Theorem \ref{terminal}, $Y \to X $ is a divisorial
contraction. It remains to consider $Q_3$. Since $z^4 \in h_4$, we
have $\bar{x}z^2 \in \bar{\varphi}$.  Hence $Y \cap U_3$ is also
non-singular by Lemma \ref{nonsing}. We thus conclude that
$\Sing(Y)=\Sing(Y)_{\ind >1}$ consists of $Q_1,Q_2$,
 which are terminal quotient singularities of index $3,2$ respectively.

\noindent
{\bf Case 2} $wt_{v_{12}}(f) <12$, $wt_{v_{8}}(f) <8$ and $wt_{v_{6}}( f) \ge 6$. \\
We consider the weighted blowup $Y \to X$ with weight
$v_{6}=(3,2,2,1)$. It is clear that $E$ is irreducible.
There is a term $\theta \in f$ with  $wt_{v_{8}}(\theta) < 8$ and
$wt_{v_{6}}(\theta) \ge 6$. One sees that $$\theta \in \{yzu^2,
yu^4, z^3u, z^2u^2, z^2u^3, zu^4, zu^5, u^6,u^7\}. $$
It follows in particular that at least one of $g_v, h_v, h_{v+1}$ is non-zero.

\noindent
{\bf Claim.} $Y \to X$ is a divisorial contraction.\\
To see this, suppose first that $\natural$ holds, then $g_v=-3s(z,u)^2$ for some $s(z,u) \ne 0$.
We may assume that $s(z,u)=u^2$ and hence $yu^4 \in \varphi$. Then $Y \cap U_4$ is nonsingular by
Lemma \ref{nonsing} and hence $Y \to X$ is a divisorial
contraction by Theorem \ref{terminal}.

Suppose that $\natural$ does not hold. Then $\Sing(Y) \cap U_4$ is isolated.
In $U_4$, the
corresponding term $\tilde{\varphi}$  of $\theta$ in $\tilde{\varphi}$ is
$$\tilde{\theta} \in \{ yz, y, z^3, z^2, z^2u, z, zu,  1,u\}.$$
Hence $\Sing(Y) \cap U_4$ is at worst of
type $cD$. By Theorem \ref{terminal}, $Y \to X$ is a divisorial
contraction. This proved the Claim.

 We consider $Y \cap U_3$. We have that $z^4 \in h_4$ and
hence $Q_3$ is at worst  of type $cA/2$. By Corollary
\ref{cAq}, $\Sing(Y)_{\ind=1} \cap U_3$ is at worst of $cA$ type.
By Lemma \ref{u1}, $\Sing(Y)_{\ind=1} \cap U_1 = \emptyset$.
Together with $Q_2 \not \in Y$, we concludes that
$\Sing(Y)_{\ind=1}$ is at worst of type $cD$ and
$\Sing(Y)_{\ind>1}=\{Q_3\}$, of type $cA/2$. Feasible resolution exists for this
case.

\noindent
{\bf Case 3} $wt_{v_{12}}(f) <12$ and $wt_{v_{8}}( f) \ge 8$. \\
We consider the weighted blowup $Y \to X$ with weight
$v_{8}=(4,3,2,1)$.

{\bf 1.} Note that $\tau(h)=4$ and $wt_{v_8}(h) \ge 8$, we thus
have $h_4=z^4 \in h_v \ne 0$. By Lemma \ref{isolated1}, $\Sing(Y)
\cap U_4$ is isolated. Also, one has $Q_3 \not \in Y$.

{\bf 2.} Since $wt_{v_{12}}(f) < 12$ and $wt_{v_{8}}(f) \ge 8$,
there is a term $\theta=y^iz^ju^k \in f$ with $wt_{v_{12}}(\theta)
< 12$ and $wt_{v_{8}}(\theta) \ge 8$.
%
Hence the corresponding term $\tilde{\theta}=y^iz^ju^{k'} \in
\tilde{\varphi}$ satisfying $$i+j+k'=i+j+(3i+2j+k-8) \le 3.$$
One can verify that $\Sing(Y) \cap U_4$ is at worst of type $cE_6$ with $h_4$ has at least two factors. 
Hence if there is a $cE_6$ points then it  is  in Case 1 or 2.
By Theorem \ref{terminal}, $Y \to X$ is a divisorial contraction.

{\bf 3.} By Lemma \ref{nonsing}, $\Sing(Y) \cap U_2 = \{Q_2 \}$ and
$Q_2$ is a terminal quotient singularity of index $3$. Also
$\Sing(Y)_{\ind=1} \cap U_1 = \emptyset$ by Lemma \ref{u1}.

{\bf 4.} We summarize that $\Sing(Y)_{\ind>1}$ consists of $Q_2$,
which is a quotient singularity of index 3 and  two terminal
quotient singularities of index $2$ in the line $(\bfy=\bfu=0)
\subset E$ and $\Sing(Y)_{\ind=1} \subset U_4$ are at worst of
type $cE_6$ in Case 1 or 2.

\noindent
{\bf Case 4.} $wt_{v_{12}}( f) \ge 12$.\\
We consider $Y \to X$ the weighted blowup with weight
$v_{12}=(6,4,3,1)$.

{\bf 1.} Since $wt_{v_{12}}( h) \ge 12$ and $\tau(h)=4$, we have
$z^4 \in h_{v}$. It follows that  $Q_3 \not \in Y$,  $h_v \ne
2s^3$, and thus $\natural$ does not holds.  By  Theorem
\ref{divcontr} and Proposition \ref{x0y0}, the weighted blowup $Y
\to X$ is a divisorial contraction with discrepancy $1$. Moreover,
$\Sing(Y)_{\ind=1}  \subset U_4$ are at worst of type $cD$ or
there is only a unique point $R \in Y$ of type of type $cE_6$.


{\bf 2.} It follows that feasible resolution exists for $P \in X$
unless that $  \Sing(Y)_{\ind=1} = R \in Y$ is of type $cE_6$. In
fact, if it is of type $cE_6$, we may assume that $R=Q_4$ (cf.
Remark \ref{Q4}).

Clearly, $$\left\{ \begin{array}{l} \tau^*(\tilde{\varphi}) <
\tau^*(\varphi), \\
wt_{v_{12}}(\tilde{\varphi}) \le wt_{v_{12}}(\varphi). \end{array}
\right.$$

The existence of feasible resolution is thus reduced to  $cE_6$
singularities with $wt_{v_{12}} < 12$ by induction on $\tau^*$.


{\bf 3.} We remark that $\Sing(Y)_{\ind>1}$ consists of  terminal quotient
singularities on the line $(\bfz=\bfu=0)$ and $(\bfy=\bfu=0)$ of index $2,3$
respectively.


This  exhausts all cases of type $cE_6$.
We thus conclude that for a given $P \in X$ of type $cE_6$, there is a feasible partial resolution $Y_s \to \ldots \to Y_1=Y \to X$ such that
$\Sing(Y_s)_{\ind=1}$ are at worst $cD$ and $\Sing(Y_s)_{\ind >1}$ can only be of type $cA/2, cA/2$ or terminal quotient. Hence  feasible resolution exists for $Y_s$ and hence for $P \in X$.
\end{proof}

\subsection{Resolution of $cE/2$ points}
It is convenient to consider $cE/2$ points before we move into the
$cE_7$ and $cE_8$ singularities. Given a $cE/2$ point $P \in X$,
which is given by
$$( \varphi =x^2+y^3+ \sum a_{ij}yz^ju^k + \sum b_{jk} z^ju^k=0) \subset \bC^4/ \frac{1}{2}(1,0,1,1),$$
 with $h_4:=\sum_{j+k=4} b_{jk}z^ju^k \ne 0$.

We will consider weighted blowup with weights
$v_1=\frac{1}{2}(3,2,3,1)$ or $v_2=\frac{1}{2}(5,4,3,1)$. Note
that  $wt_{v_1} (z) =wt_{v_2}(z), wt_{v_1} (u) =wt_{v_2}(u)$.
Hence may simply denote it as  $wt_{3,1}(G)$ for $G \in
\bC[[z,u]]$.

\begin{thm} \label{cE/2}
There is a feasible resolution for any $cE/2$ singularity.
\end{thm}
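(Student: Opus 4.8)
The plan is to treat $cE/2$ by a single well-chosen divisorial contraction, exactly in the spirit of Lemmas \ref{cAx4}, \ref{cD2} and \ref{cD3}: I will produce a weighted blowup $Y\to X$ of discrepancy $\tfrac12$ for which $\Sing(Y)_{\ind=1}$ is at worst of type $cA$ or $cD$ and $\Sing(Y)_{\ind>1}$ consists only of points strictly milder than $cE/2$. Since all of $cA$, $cD$, $cAx/2$, $cAx/4$, $cD/2$, $cD/3$, $cA/r$ and terminal quotients already admit feasible resolutions (Lemma \ref{quo}, Corollary \ref{cAr}, and Theorems \ref{cArsln}, \ref{cD}, \ref{hind}), this reduces $P\in X$ to already-resolved germs and proves the theorem. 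The two candidate weights are $v_1=\tfrac12(3,2,3,1)$ and $v_2=\tfrac12(5,4,3,1)$; both assign $z,u$ the weights $\tfrac32,\tfrac12$, so the $(3,1)$-weight $wt_{3,1}$ governs everything. For $v_1$ one has $wt_{v_1}(x^2)=wt_{v_1}(y^3)=3$, the regime of Lemma \ref{isolated}, while for $v_2$ one has $wt_{v_2}(x^2)=5$ and $wt_{v_2}(y^3)=6=wt_{v_2}(\varphi)+1$, which is exactly the regime of Lemma \ref{isolated1}.

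The essential dichotomy is whether the condition $\natural$ of Lemma \ref{isolated} holds for $v_1$. If it fails, then $E$ is irreducible, $\Sing(Y)\cap U_4$ is isolated, and $Y\to X$ is a divisorial contraction of discrepancy $\tfrac12$ by Theorem \ref{terminal}. Crucially, because $wt_{v_1}(z)/wt_{v_1}(u)=3$, the bounds $\dag_g$ and $\dag_h$ (applied with $k=3$) force the leading exceptional equation on $U_4$ to involve $z$ only up to $z^2$; hence $E\cap U_4$ is at worst a Du Val point of type $A$ or $D$, and $\Sing(Y)_{\ind=1}$ is of type $cA$ or $cD$. If instead $\natural$ holds, then $y^3+yg_v+h_v=(y-s)^2(y+2s)$ for some $s(z,u)$ with $wt_{3,1}(s)=1$; substituting $\bar y=y-s$ turns $\varphi$ into $x^2+\bar y^3+3s\,\bar y^2+\cdots$, and I switch to $v_2$, under which the $s\bar y^2$ term has weight $5=wt_{v_2}(\varphi)$ while $y^3$ has weight one higher. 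Lemma \ref{isolated1} then gives that $\Sing(Y)\cap U_4$ is isolated: its exceptional non-isolated case $g_v=h_v=h_{v+1}=0$ cannot occur, since $z^4\in h_4$ has $wt_{v_2}(z^4)=6$ and so contributes to $h_{v+1}$, forcing $h_{v+1}\neq0$. Because $wt_{v_2}(z)=\tfrac32$ now allows $z$ only up to $z^3$ in the leading part, $E\cap U_4$ is at worst the $D_4$-germ $x^2+y^3+z^3$; thus again $\Sing(Y)_{\ind=1}$ is $cA$ or $cD$. In particular no $cE_6$ point is ever produced at index $1$, so no further induction within the $cE$-hierarchy is required.

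It remains to control $\Sing(Y)_{\ind>1}$, which lies on $E\cap\Sing(\bP(v))$. I would clear $U_1$ with Lemma \ref{u1} (since $wt_v(x^2)=wt_v(\varphi)$ for both weights), clear the quasi-smooth charts with Lemma \ref{nonsing}, and then identify the remaining vertices $Q_2,Q_3$ (of indices $2,3$ for $v_1$ and $4,3$ for $v_2$) together with any quotient points along the coordinate edges, using Corollary \ref{cAq} and the $U_2$-argument of Proposition \ref{x0y0}. The decisive point --- and the main obstacle --- is to verify chart by chart that each of these higher-index points is a terminal quotient or a point of type $cA/r$, $cAx/2$, $cAx/4$, $cD/2$ or $cD/3$, and in particular that the blowup never regenerates a $cE/2$ germ, so that all of them fall into the classes resolved by Theorems \ref{hind} and \ref{cD}, Corollary \ref{cAr} and Lemma \ref{quo}. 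This amounts to matching the two parities of $wt_{3,1}(f)$ and the perfect-square completion to the correct weight $v_1$ or $v_2$, just as in the $cAx/2$ and $cD$ arguments. Granting this bookkeeping, every singularity of $Y$ admits a feasible resolution, hence so does $P\in X$.
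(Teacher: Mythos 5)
Your overall strategy --- the weights $\tfrac12(3,2,3,1)$ and $\tfrac12(5,4,3,1)$, the dichotomy on $\natural$, Lemmas \ref{isolated} and \ref{isolated1}, and reduction to previously resolved classes --- is the same as the paper's, but two of your central claims do not hold up. The most serious is the assertion that $\Sing(Y)_{\ind=1}$ is always at worst of type $cD$, so that ``no further induction within the $cE$-hierarchy is required.'' Under $v_2=\tfrac12(5,4,3,1)$ one has $wt_{v_2}(\bar y^3)=6>5=wt_{v_2}(\varphi)$, so $\bar y^3$ does \emph{not} enter the leading part of $E$; moreover the condition $\natural$ permits $s=0$, in which case the $3s\bar y^2$ term is absent as well. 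Then $E\cap U_4$ is cut out by $x^2+\bar y\,g_{(3)}(z,1)+h_{(5)}(z,1)$ with $h_{(5)}(z,1)$ an arbitrary cubic in $z$; if that cubic has a triple root where $g_{(3)}$ vanishes, the germ is $x^2+c(z-\alpha)^3+\cdots$, which is not bounded by a $D_4$ singularity. Your inference from ``$z$ appears only up to $z^3$'' to ``at worst $D_4$'' conflates the absence of $z^4$ with the absence of a perfect cube. The paper can only conclude ``at worst $cE_6$'' on $U_4$ in this regime and must invoke Theorem \ref{cE6} (which is proved \emph{before} the $cE/2$ case precisely for this reason); also note that your appeal to $z^4\in h_4$ is unwarranted, since for a $cE/2$ point with $wt_{3,1}(h_4)\ge 5$ one may have $h_4=z^3u$ only.

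The second gap is a missing subcase. When $\natural$ holds and, after the substitution $\bar y=y-s$, every monomial $y^iz^ju^k$ of $\bar\varphi$ other than $x^2$, $\bar y^3$ satisfies $6i+5j+k\ge 17$, the weight $v_2$ produces no low-degree term in $\tilde\varphi$ on $U_4$ and hence gives no control on the resulting singularity; the paper switches to the weight $\tfrac12(9,6,5,1)$ there (its Subcase 4-2), a weight your proposal never considers. Finally, the verification that the higher-index points are benign --- that $Q_3$ is of type $cD/3$ or a terminal quotient, that the index-$3$ points along $(\bfy=\bfu=0)$ are terminal quotients, and that no $cE/2$ germ reappears --- is exactly the content of the paper's chart-by-chart computations in its Cases 1--3, and you explicitly defer it (``granting this bookkeeping''). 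As written, the proposal is a plausible plan whose two load-bearing claims are respectively false and unexamined.
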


\begin{proof}
We first consider weighted blowup $ Y \to X$ with weight
$v=\frac{1}{2}(3,2,3,1)$.
 As before, we can rewrite $\varphi$  as
$$\varphi =x^2+y^3+ y g_v+yg_{v+1}+yg_>+h_v+h_{v+1}+h_>.$$

Notice that  Lemma \ref{isolated} still holds in the current situation.

\noindent
{\bf Case 1.} $wt_{3,1}(h_4) = 3$, i.e. $u^4 \not \in h_4, zu^3 \in h_4$.\\
%
It is straightforward to see  that $E$ is irreducible and $Y \cap U_4$ is nonsingular,
hence $Y \to X$ is a divisorial contraction. Also $Y \cap U_3$ has
singularity $Q_3$ of type $cD/3$, might have terminal quotient
singularity of index $3$ along the line $\bfy=\bfu=0$ and might have
singularity at worst of type $cD$. There is no other singularity.

\noindent
{\bf Case 2.} $wt_{3,1}(h_4) = 4$, i.e. $u^4, zu^3 \not \in h_4, z^2u^2 \in h_4$.\\
Since $z^2u^2 \in h_{v+1}$, one sees that $\natural$ does not hold
and hence $Y \cap U_4$ has only isolated singularities.

It is straightforward to see  that $Y \cap U_4$ might have
singularities at worst of type $cD$, hence $Y \to X$ is a
divisorial contraction.  On $Y \cap U_3$, there are a  singularity
$Q_3$ of type $cD/3$, possibly terminal quotient singularities of
index $3$ along the line $(\bfy=\bfu=0)$ and possibly
singularities at worst of type $cD$. There is no other singularity
outside $U_3 \cup U_4$.

\noindent
 {\bf Case 3.} $wt_{3,1}(h_4)  \ge 5$  and  $\natural$ does not hold. \\
 The similar argument works.
Indeed, there is a term $\theta \in \varphi$
 among $\{yu^4, yzu^3, yu^6, zu^5,u^6, u^8\}$. The corresponding
 term in $\tilde{\varphi}$ the equation of $Y \cap U_4$ is among
 $\{y, yzu, yu, zu, 1, u\}$. It is easy to see that singularities
 are at worst of type $cD$ or $cD/3$ as in Case 2.

\noindent
 {\bf Case 4.} $wt_{3,1}(h_4)  \ge 5$ and $\natural$  holds. \\
We then consider  a coordinate change that $\bar{y}:=y-\lambda
u^2$ for some $\lambda$  so that we may rewrite $P \in X$ as
$$ \bar{\varphi}= x^2+\bar{y}^3+3s \bar{y}^2+\bar{y} g_{v+1}+ \bar{y} g_{v+2} +\bar{y} {g}_> + \bar{h}_{v+2}+\bar{h}_{v+3}+\bar{h}_> $$ similarly.

Since $wt_{3,1}(h_4)  \ge 5$, one has  either $z^3u$ or $z^4 \in
\varphi$. It follows that either $z^3u$ or $z^4 \in
\bar{\varphi}$.

\noindent {\bf Subcase 4-1.} Suppose that there is a term
$\theta=y^iz^ju^k \in \bar{\varphi}$ such that $6i+5j+k \le 16$.
We consider weighted blowup $ Y \to X$ with weight
$\frac{1}{2}(5,4,3,1)$ instead. By Lemma \ref{isolated1}, $Y \cap
U_4$ is isolated. The corresponding term
$\tilde{\theta}=y^iz^ju^{k'}$ in $Y \cap U_4$ satisfying $$
i+j+k'=j+(3j+k-10)/2 \le 3.$$ One sees that $Y \cap U_4$ has at
worst $cE_6$ singularities. Hence $Y \to X$ is a divisorial
contraction.

Moreover, $\Sing(Y) \cap U_i=\{Q_i\}$ for $i=2,3$, which is a
terminal quotient singularity of index $4$ and $3$. Also $Y \cap
U_1$ is non-singular. Therefore feasible resolutions exist.

\noindent {\bf Subcase 4-2.} Suppose that there is no term
$\theta=y^iz^ju^k \in \bar{\varphi}$ such that $6i+5j+k \le 16$.
We consider weighted blowup $Y \to X$ with weight
$v_3=\frac{1}{2}(9,6,5,1)$ instead. Note that in this situation,
$wt_{v_3} \bar{\varphi}=9$ and $z^4 \in \bar{\varphi}$. It is easy
to see that $\Sing(Y) \cap U_4 $ is isolated by Lemma
\ref{isolated} or by direct computation. Indeed, $Y \cap U_4$ has
at worst singularities of type $cE_6$. Hence $Y \to X$ is a
divisorial contraction.

Moreover, $\Sing(Y)_{\ind>1}=\{Q_3\}$ which is of index $5$.
Another higher index point is a point $R \in (\bfz=\bfu=0)$, which
is terminal quotient of index $3$.  We thus conclude that a
feasible resolution exists for any $cE/2$ point by Theorem
\ref{cE6} and results in previous sections.
\end{proof}

\subsection{Resolution of $cE_7$ points}
In this subsection, we consider $cE_7$ points.
\begin{thm} \label{cE7}
There is a feasible resolution for  any $cE_7$ singularity.
\end{thm}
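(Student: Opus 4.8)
The plan is to follow the same strategy used for $cE_6$ and $cE/2$ points: produce a finite sequence of carefully chosen weighted blowups, each a divisorial contraction with minimal discrepancy, after which all index-$1$ singularities are of type $cA$ or $cD$ (for which feasible resolutions already exist by Theorem \ref{cD}) and all higher-index singularities are terminal quotients, $cA/r$, $cAx/2$, $cAx/4$, $cD/2$, $cD/3$, or $cE/2$ (handled by Theorem \ref{hind} and Theorem \ref{cE/2}). Recall that for a $cE_7$ point we have $\tau(g)=3$ and $\tau(h)\ge 5$, and by the normalization in the Notations we may assume $z^3\in g_3$. The key numerical invariants to induct on are the weighted multiplicity $wt_{v}(\varphi)$ with respect to a distinguished large weight such as $v_{18}=(9,6,4,1)$ (for which $wt_v(x^2)=wt_v(y^3)=18$), together with $\tau^*(\varphi)$, exactly as in the $cE_6$ proof.

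First I would organize the argument by the size of $wt_{v}(f)$ for a descending list of weights adapted to $cE_7$, presumably $v_{18}$ together with intermediate weights of the shape $(a,b,k,1)$ with $wt_v(x^2)=wt_v(y^3)=wt_v(\varphi)$. For each case I would invoke Proposition \ref{x0y0} and Theorem \ref{divcontr} to conclude that, provided $\natural$ does not hold, the chosen weighted blowup $Y\to X$ is a divisorial contraction with $\Sing(Y)_{\ind=1}\subset U_4$, and that every index-$1$ singularity on $U_4$ is at worst of type $cE_7$ by the Case 2 analysis in Setup \ref{u4} (where $g_v\ne 0$ governs the $cE_7$ bound via $m+\sum l_t\le 3$). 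Since $z^3\in g_3$, the chart $U_3$ should be controlled so that $Q_3\notin Y$ or $Q_3$ is a quotient/$cAx$ point, and the higher-index points on the coordinate lines are terminal quotients. In the worst case a single $cE_7$ point $R$ persists on $U_4$; by Remark \ref{Q4} I may assume $R=Q_4$, and then check that $\tau^*(\tilde\varphi)<\tau^*(\varphi)$ while $wt_{v_{18}}(\tilde\varphi)\le wt_{v_{18}}(\varphi)$, which drives the induction on $\tau^*$ down to the regime $wt_{v_{18}}(f)<18$.

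When $wt_{v_{18}}(f)<18$ one descends to smaller weights, and here I expect the delicate point to be the subcase where $g_3$ (equivalently $g_v$) is a perfect square in the relevant weighted sense, so that condition $\natural$ can occur. As in Case 4 of Theorem \ref{cE/2}, the remedy is a weight-respecting coordinate change $y\mapsto y - s(z,u)$ absorbing the square, after which $\natural$ is destroyed and Lemma \ref{isolated} or Lemma \ref{isolated1} again guarantees that $\Sing(Y)\cap U_4$ is isolated; one then re-runs the contraction with an adjusted weight such as $(a+1,b,k,1)$. The main obstacle will be verifying, across the finitely many normalized forms of $g_3$ (namely $z^3$, $z^3+z^2u$, $z^3+zu^2$) and the resulting possibilities for $h_v,h_{v+1}$, that after each blowup the surviving index-$1$ points are genuinely no worse than $cE_7$ and that the paired invariant $(wt_{v_{18}},\tau^*)$ strictly decreases in lexicographic order; this is the usual bookkeeping that prevents the induction from stalling in a $cE_7\to cE_7$ loop.

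I would close by assembling the cases: the descent on $(wt_{v_{18}},\tau^*)$ terminates, leaving a $Y_s$ whose index-$1$ singularities are at worst $cD$ and whose higher-index singularities are terminal quotients, $cAx/2$, $cA/2$, $cD/2$, $cD/3$, or $cE/2$. Applying Theorem \ref{cD}, Theorem \ref{hind}, and Theorem \ref{cE/2} to these yields a feasible resolution of $Y_s$, hence of the original $cE_7$ point $P\in X$, completing the proof.
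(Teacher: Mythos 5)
Your proposal follows essentially the same route as the paper: a descending ladder of weights $(a,b,k,1)$ headed by $v_{18}=(9,6,4,1)$ (the paper uses $v_{14},v_{12},v_9,v_8,v_6,v_5$ for the lower rungs), divisorial contractions certified by Theorem \ref{divcontr} and Proposition \ref{x0y0}, the Case 2 analysis of Setup \ref{u4} to bound index-$1$ singularities by $cE_7$, induction on $\tau^*$ via Remark \ref{Q4} when a $cE_7$ point survives at $Q_4$, and the coordinate change $y\mapsto y-s(z,u)$ with a bumped weight (the paper uses $v_{14}=(7,5,3,1)$ after $v_{12}$) when $\natural$ holds. The case-by-case bookkeeping you defer is exactly what the paper's proof carries out, so the approach matches.
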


\begin{proof}  We shall consider weights $v_{18}=(9,6,4,1)$,
$v_{14}=(7,5,3,1)$, $v_{12}=(6,4,3,1)$, $v_9=(5,3,2,1)$,
$v_8=(4,3,2,1)$, $v_6=(3,2,2,1)$, $v_5=(3,2,1,1)$ and discuss as
in $cE_6$ case.

 \noindent
{\bf Case 1.} $wt_{v_{18}}( f) < 18$,$\ldots,wt_{v_{6}}( f) < 6$.\\
We consider weighted blowup with weight $v_{5}=(3,2,1,1)$.

 Since  $z^3 \in g_v$, we have that $yz^3 \in f$, $E$ is irreducible,  and $Y \cap U_3$ is non-singular. Hence, $Y \to X$ is a
divisorial contraction by Theorem \ref{terminal}.

By Lemma  \ref{nonsing},  $Y \cap U_i$ is non-singular
away from $Q_i$ for $i=1,2,3$.
 Notice that there is a term $\theta$
with  $wt_{v_{6}}(\theta) <6$ and $wt_{v_{5}}( \theta) \ge 5$. It
follows that $\theta$ is $yu^3$ or $u^5$. Hence $Q_4$ is either
non-singular or $Q_4 \not \in Y$. Therefore,
$\Sing(Y)=\Sing(Y)_{\ind
>1}=\{Q_1,Q_2\}$, which are terminal quotient points of index $3$ and $2$ respectively.

\noindent
{\bf Case 2.}$wt_{v_{18}}( f) < 18$,$\ldots,wt_{v_{8}}( f) < 8$, $wt_{v_{6}}( f) \ge 6$.\\
We consider weighted blowup with weight $v_{6}=(3,2,2,1)$ and proceed as in Case 2 of $cE_6$, then $\wBl_{v_6}: Y \to X$ is
a divisorial contraction and $\Sing(Y) \cap U_4$ is at worst of type $cD$.

 We consider $Y \cap U_3$. Since $yz^3 \in \varphi$, one sees that $Q_3$ is at worst  of type $cD/2$. By Corollary
\ref{cDq}, $\Sing(Y)_{\ind=1} \cap U_3$ is at worst of type $cD$.
By Lemma \ref{u1}, $\Sing(Y)_{\ind=1} \cap U_1 = \emptyset$.
Together with $Q_2 \not \in Y$, we conclude that
$\Sing(Y)_{\ind=1}$ is at worst of type $cD$ and
$\Sing(Y)_{\ind>1}=\{Q_3\}$, of type $cD/2$. Feasible resolution exists for this
case.

 \noindent
{\bf Case 3.} $wt_{v_{18}}( f) < 18$,$\ldots,wt_{v_{9}}( f) < 9$, $wt_{v_{8}}( f) \ge 8$.\\
We consider weighted blowup with weight $v_{8}=(4,3,2,1)$.

There is a term $\theta=y^{i}z^{j}u^{k}$ satisfying $
wt_{v_{9}}(\theta) <9$  and $ wt_{v_{8}}(\theta) \ge 8 $. Hence
either $g_v$ or  $h_v$ contains $\theta$ and is non-zero. By Lemma \ref{isolated1}, $\Sing(Y)
\cap U_4 $ is isolated.

By the same argument as in Case 3 of $cE_6$, 
one sees that $\Sing(Y) \cap U_4$ is
at worst of type $cE_6$. This implies in particular that $Y \to X$
is a divisorial contraction.

Since both $y^3, yz^3$ are in $\varphi$, by Lemma \ref{nonsing},
one has $Y \cap U_i$ is nonsingular away from $Q_i$ for $i=2,3$.
Together with $\Sing(Y)_{\ind=1} \cap U_1=\emptyset$, we are done.

\noindent
{\bf Case 4.} $wt_{v_{18}}( f) < 18$,$\ldots,wt_{v_{12}}( f) < 12$, $wt_{v_{9}}( f) \ge 9$.\\
We consider weighted blowup with weight $v_{9}=(5,3,2,1)$. One has
$z^3  \in g_v \ne -3s^2$. Hence $\natural$ does not hold and
$\Sing(Y)\cap U_4$ is isolated by Lemma \ref{isolated}.


We consider $Y \cap U_4$. Since $ wt_{v_{12}}(\theta) <12$  and $
wt_{v_{9}}(\theta) \ge 9 $ for some $\theta=y^{i}z^{j}u^{k} \in
\varphi$, we have $\tilde{\theta}=y^{i}z^{j}u^{k'} \in
\tilde{\varphi}$ with $i+j+k'=4i+3j+k-9 \le 2$. It follows easily
that $Y \cap U_4$ has at worst singularity of type $cA$ by Lemma
\ref{cA}. Therefore, $Y \to X$ is a divisorial contraction with
discrepancy $1$.

By Lemma \ref{nonsing}, one sees that $Y \cap U_i$ is non-singular
away from $Q_i$ for $i=1,3$. Moreover, $Q_2 \not \in Y$.  Feasible resolution exists for this case.

 \noindent
{\bf Case 5.}$wt_{v_{18}}( f) < 18$, $wt_{v_{14}}( f) < 14$, $wt_{v_{12}}( f) \ge 12$.\\
We consider the weighted blowup with weight $v_{12}=(6,4,3,1)$.
One has $z^3 \in g_{v+1} \ne 0$.


\noindent {\bf Subcase 5-1.} Suppose that $\natural$ does not hold. \\
Then $Y \to X$ is a divisorial contraction and $\Sing(Y)_{\ind=1}
\subset U_4$ by Proposition \ref{x0y0}.

Indeed, by the discussion in \ref{u4}, we may assume that
$\Sing(Y)_{\ind=1}$ singularities at worst of type $cD$ or a
singularity of type $cE_7$ at $Q_4$.
Clearly, we have $$ \left\{ \begin{array}{l}
\tau^*(\tilde{\varphi}) < \tau^*({\varphi}), \\
wt_{v_{18}}(\tilde{\varphi}) \le
wt_{v_{18}}({\varphi}), \\
wt_{v_{14}}(\tilde{\varphi}) \le wt_{v_{14}}({\varphi}).
\end{array} \right.$$
By induction on $\tau^*$, we are reduced to the case that $wt_{v_{12}} < 12$.

\noindent {\bf Subcase 5-2.} Suppose that $\natural$ hold. \\
Notice that there is $\theta \in \varphi$ with
$\varphi_{v_{14}}(\theta) <14,  \varphi_{v_{12}}(\theta) \ge 12$, it
is easy to see that  $\theta \in y g_v$ , $h_v$ or in $h_{v+1}$.
This implies in particular that $s \ne 0$.
 We  consider a coordinate change that
$\bar{y}:=y-s(z,u)$ for some $s=\alpha zu+ \beta u^4$    so that
we may write $P \in X$ as
$$ \bar{\varphi}= x^2+\bar{y}^3+3s \bar{y}^2+\bar{y} g_{v+1}+ \bar{y} \bar{g}_> + \bar{h}_>$$
similarly.

We consider weighted blowup with weight $v_{14}=(7,5,3,1)$ instead
in this situation.
Since  $\bar{y}z^3 \in \bar{\varphi}$, by Lemma
\ref{isolated1}, one sees that $\Sing(Y) \cap U_4$ is isolated.
 One can check that $\Sing(Y) \cap U_4$ has
at worst singularity of type $cD$ for $s \ne 0$. Therefore, $Y \to X$ is a divisorial contraction.

One can easily check that for $i=2,3$, $\Sing(Y) \cap U_i= \{Q_i\}$, which is terminal quotient of index $5$ and $3$ respectively.  Moreover,
$Q_1 \not \in Y$ and hence there exists a feasible resolution.

\noindent
{\bf Case 6.} $wt_{v_{18}}( f) < 18$, $wt_{v_{14}}( f) \ge 14$.\\
We consider the weighted blowup with weight $v_{14}=(7,5,3,1)$.
Similarly, one has $g_{3}=z^3$ and $z^3  \in g_v  \ne 0$. By Lemma
\ref{isolated1}, $\Sing(Y) \cap U_4$ is isolated.
Since $ wt_{v_{18}}(\theta) <18$  and $ wt_{v_{14}}(\theta) \ge 14
$ for some $\theta=y^{i}z^{j}u^{k} \in \varphi$, we have
$\tilde{\theta}=y^{i}z^{j}u^{k'} \in \tilde{\varphi}$ with
$i+j+k'=6i+4j+k-14 \le 3$. One can verify that  any $R \in \Sing(Y)
\cap U_4$  is at worst of type $cE_6$. Therefore $Y \to X $ is a
divisorial contraction.

By Lemma \ref{nonsing}, we have that $Y \cap U_i$ is nonsingular
away from $Q_i$ for $i=2,3$.
 Moreover $Q_1 \not \in Y$, hence feasible resolution exists for this case.

\noindent
{\bf Case 7.} $wt_{v_{18}}( f) \ge 18$\\
We consider the weighted blowup with weight $v_{18}=(9,6,4,1)$.
Since $wt_{v_{18}}( g) \ge 18$ and $\tau(g)=3$, we have
$g_{3}=z^3$ and $z^3  \in g_v  \ne -3s^2$. It is clear that
$\natural$ does not holds.
By Proposition \ref{x0y0}, $Y \to X$ is a divisorial contraction
and $\Sing(Y)_{\ind=1} \subset U_4$.


 $\Sing(Y) \cap U_4$ is at worst of type $cE_7$. If there
is a singularity of type $cE_7$, then we proceed by induction in $\tau^*$.
Then it can be reduced to
the  cases with $wt_{v_{18}} < 18$.


This completes the proof that a feasible resolution exist for $cE_7$ singularity.
\end{proof}

\subsection{Resolution of $cE_8$ points}



In this subsection, we shall prove that
\begin{thm} \label{cE8}
There is a feasible resolution for  any $cE_8$ singularity.
\end{thm}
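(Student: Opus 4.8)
The plan is to run the same descending hierarchy of weighted blowups used for $cE_6$ (Theorem \ref{cE6}) and $cE_7$ (Theorem \ref{cE7}), now with one more layer at the top. Recall that a $cE_8$ point has $\tau(h)=5$, $\tau(g)\ge 4$, and in the normal form $z^5\in h_5$. Writing a weight as $v_l=(a,b,k,1)$ with $l=a+b+k-1=wt_v(\varphi)$, the blowups with $wt_v(x^2)=wt_v(y^3)=wt_v(\varphi)$ are precisely $v=(3t,2t,t+1,1)$, namely $v_6,v_{12},v_{18},v_{24},v_{30}$; since $wt_v(z^5)=5(t+1)$, the smallest such weight carrying $z^5$ into $h_v$ is $v_{30}=(15,10,6,1)$, which will serve as the top weight. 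I would first fix the list of blowups to be used: $v_{30}=(15,10,6,1)$, $v_{24}=(12,8,5,1)$, $v_{18}=(9,6,4,1)$, together with the lower and intermediate weights already appearing in the $cE_7$ argument, such as $v_{14}=(7,5,3,1)$, $v_{12}=(6,4,3,1)$, $v_9=(5,3,2,1)$, $v_8=(4,3,2,1)$, $v_6=(3,2,2,1)$ and $v_5=(3,2,1,1)$. The argument then proceeds by case analysis on the smallest $l$ for which $wt_{v_l}(f)\ge l$, combined with an induction on the pair $(\tau^*(\varphi),\,wt_{v_{30}}(\varphi))$.

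In each case I would take the corresponding $\wBl_v\colon Y\to X$ and argue exactly as in the Du Val analysis preceding Theorem \ref{divcontr}. In the top case $wt_{v_{30}}(f)\ge 30$ one has $z^5\in h_v\ne 0$, while in the lower cases the transition term $\theta\in f$ with $wt_{v_{l'}}(\theta)<l'\le wt_{v_l}(\theta)$ guarantees that at least one of $g_v,h_v,h_{v+1}$ is nonzero and bounds the relevant multiplicity. Provided the degeneracy $\natural$ fails, Lemma \ref{isolated} or Lemma \ref{isolated1} shows that $\Sing(Y)\cap U_4$ is isolated, and Cases 1--3 of that analysis show each such point is at worst $cE_8$, and is strictly milder --- of type $cA$, $cD$, $cE_6$ or $cE_7$ --- except in the borderline subcase where the $h_v$-section is Du Val of type $E_8$ (multiplicity $m'+\sum l_t'=5$); Theorem \ref{terminal} (through Theorem \ref{divcontr} and Proposition \ref{x0y0}) then makes $Y\to X$ a divisorial contraction of discrepancy $1$. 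The locus $\Sing(Y)_{\ind>1}$ will consist only of terminal quotient points together with points of type $cA/2,cD/2,cD/3$ or $cE/2$ on the singular strata of $\bP(v)$, all of which already admit feasible resolutions by Theorems \ref{hind} and \ref{cE/2}; the index-one points of type $cA,cD,cE_6,cE_7$ are resolved by Theorems \ref{cArsln}, \ref{cD}, \ref{cE6} and \ref{cE7}. When a genuine $cE_8$ point is reproduced --- which happens only for the top weights and is confined to $U_4$ by Proposition \ref{x0y0} --- I would use Remark \ref{Q4} to place it at $Q_4$ and check that $\tau^*(\tilde\varphi)<\tau^*(\varphi)$ while $wt_{v_{30}}(\tilde\varphi)\le wt_{v_{30}}(\varphi)$, so that the induction on $\tau^*$ reduces everything to the cases $wt_{v_{30}}(f)<30$, then $wt_{v_{24}}(f)<24$, and so on down to $v_6,v_5$.

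The hard part will be the degenerate configuration $\natural$, where $g_v=-3s^2$, $h_v=2s^3$ and $h_{v+1}=-s\,g_{v+1}$, in which $\Sing(Y)\cap U_4$ may fail to be isolated. As in Subcase 5-2 of Theorem \ref{cE7} and Case 4 of Theorem \ref{cE/2}, I would absorb $s$ by the weight-compatible substitution $\bar y=y-s(z,u)$, rewriting $\varphi$ as $x^2+\bar y^3+3s\,\bar y^2+\bar y\,g_{v+1}+\cdots$, and then re-blow-up with the next admissible weight (for instance passing from a $v_{2t}$ to an intermediate weight of $v_{14}$-type with $wt_v(y^3)=wt_v(\varphi)+1$), so that Lemma \ref{isolated1} applies. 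In each occurrence one must verify that $s\ne 0$ is genuinely forced by the weight inequalities defining the case, and that the substitution strictly decreases the inductive invariant; I expect this bookkeeping, rather than any single geometric input, to be the real obstacle. Once all such degeneracies are cleared, the low-weight cases $v_8,v_6,v_5$ reduce, exactly as in the $cE_7$ argument, to singularities of type $cA$, $cD$, $cAx/2$, $cA/2$ and terminal quotients, completing the induction and establishing the existence of a feasible resolution for every $cE_8$ point.
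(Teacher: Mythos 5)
Your proposal follows essentially the same route as the paper's proof: the descending case division on the smallest $l$ with $wt_{v_l}(f)\ge l$ using the weights $v_{30},v_{24},v_{18},\dots,v_6,v_5$ (plus the intermediate weight $(10,7,4,1)$ playing the role you describe for $v_{14}$), the reduction of each chart to milder singularities via Theorem \ref{divcontr}, Proposition \ref{x0y0} and Lemmas \ref{isolated}, \ref{isolated1}, the substitution $\bar y=y-s(z,u)$ with a shifted weight when $\natural$ holds, and the induction on $\tau^*$ when a $cE_8$ point reappears at $Q_4$ for the top weights. This matches the paper's argument in both structure and detail.
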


\begin{proof}
We will need to consider  weights $v_{30}=(15, 10, 6, 1)$, $v_{24}=(12, 8, 5, 1)$,...etc.

 \noindent
{\bf Case 1.}$wt_{v_{30}}(f)  < 30,\ldots,wt_{v_{8}}(f) <
8$.\\
We consider weighted blowup with weight $v_6=(3,2,2,1)$. By 6.2,
we have that $wt_{v_{6}}(f) \ge 6$ always holds and $z^5 \in h$.
We consider weighted blowup with weight $v_{6}=(3,2,2,1)$ and proceed as in Case 2 of $cE_6$, then $\wBl_{v_6}: Y \to X$ is
a divisorial contraction and $\Sing(Y) \cap U_4$ is at worst of type $cD$.

 We consider $Y \cap U_3$. We have that $z^5 \in \varphi$.
Hence $Q_3$ is at worst  of type $cE/2$ and  $\Sing(Y)_{\ind=1} \cap U_3$ is at worst of type $cE_6$.
By Lemma \ref{u1}, $\Sing(Y)_{\ind=1} \cap U_1 = \emptyset$.
Together with $Q_2 \not \in Y$, we conclude that
feasible resolution exists for this
case.

\noindent {\bf Case 2.}$wt_{v_{30}}(f)  < 30,\ldots,wt_{v_{9}}(f)
<
9$,  $wt_{v_{8}}(f) \ge 8$\\
We consider weighted blowup with weight $v_8=(4,3,2,1)$. By the
same argument as in Case 3 of $cE_7$, one has that $Y \to X$ is a
divisorial contraction with $\Sing(Y) \cap U_4$  at worst of type
$cE_6$.

Since  $y^3$ is in $\varphi$, by Lemma \ref{nonsing}, one has that $Y
\cap U_2$ is nonsingular away from $Q_2$. Together with
$\Sing(Y)_{\ind=1} \cap U_1=\emptyset$ and $Q_3$ is of type $cAx/2$, we are done.

 \noindent
{\bf Case 3.} $wt_{v_{30}}(f)  < 30,\ldots,wt_{v_{12}}(f) <
12$,  $wt_{v_{9}}(f) \ge 9$\\
We consider weighted blowup with weight $v_9=(5,3,2,1)$. Since
$h_5 \ne 0$, we have either $z^4u$ or $z^5 \in h$. Now the same
argument as in Case 4 of $cE_7$ goes through.

 \noindent
{\bf Case 4.}$wt_{v_{30}}(f)  < 30,\ldots,wt_{v_{14}}(f) < 14$,
$wt_{v_{12}}(f) \ge 12$.

 We consider weighted blowup with weight
$v_{12}=(6,4,3,1)$. The proof is essentially parallel to Case 5 of $cE_7$.
Note that we have $h_5=z^5$ or $z^4u \in f$.

\noindent {\bf Subcase 4-1.} Suppose $\natural$ does not hold.

Then $Y \to X$ is a divisorial contraction and $\Sing(Y)_{\ind=1}
\subset U_4$ by Theorem \ref{divcontr} and Proposition \ref{x0y0}.

Indeed by the discussion in \ref{u4}, we know that either
$\Sing(Y)_{\ind=1}$ consists of singularities at worst of type $cE_6$ or we may
assume that $Q_4$, the only  singularity  in $U_4$, is  of type
$cE_8$.

Clearly, we have $$ \left\{ \begin{array}{l}
\tau^*(\tilde{\varphi}) < \tau^*({\varphi}), \\
wt_{v_{l}}(\tilde{\varphi}) \le
wt_{v_{l}}({\varphi}), \\
\end{array} \right.$$ for all $l \ge 12$.
By induction on $\tau^*$, we are done.

\noindent
{\bf Subcase 4-2.} Suppose that $\natural$ hold. \\
As in Subcase 5-2 of $cE_7$, we consider a coordinate change and then the weighted blowup with weight $v_{14}=(7,5,3,1)$ instead
in this situation.
Since  $z^5 \in \bar{\varphi}$, by Lemma
\ref{isolated1}, one sees that $\Sing(Y) \cap U_4$ is isolated.
 One can check that $\Sing(Y) \cap U_4$ has
at worst singularity of type $cD$ for $s \ne 0$. Therefore, $Y \to X$ is a divisorial contraction.

One can easily check that for $i=2,3$, $\Sing(Y) \cap U_i= \{Q_i\}$, which is terminal quotient of index $5$ and $3$ respectively.  Moreover,
$Q_1 \not \in Y$ and hence there exists a feasible resolution.

 \noindent
{\bf Case 5.}$wt_{v_{30}}(f)  < 30$, $\ldots, wt_{v_{18}}(f) <
18$,  $wt_{v_{14}}(f) \ge 14$\\
We can proceed as  in Subcase 6 of $cE_7$. Since $z^5 \in h_{v+1} \ne 0$, we still have that $\Sing(Y) \cap U_4$ has
isolated singularities by Lemma \ref{isolated1}. Thus the same conclusion holds.

 \noindent
{\bf Case 6.}$wt_{v_{30}}(f)  < 30$, $wt_{v_{24}}(f)  < 24, wt_{v_{20}}(f) <
20$,  $wt_{v_{18}}(f) \ge 18$\\
We consider weighted blowup with weight $v_{18}=(9,6,4,1)$.

Since there is a term $\theta \in f$ with $wt_{v_{20}}(\theta)
<20$ and $wt_{v_{18}}( \theta) \ge 18$. This implies
$\tilde{\theta}$ is in $g_v, h_v$ or $h_{v+1}$.

\noindent
{\bf Subcase 6-1.} Suppose $\natural$ does not hold. \\
Then $Y \to X$ is a divisorial contraction by Theorem \ref{divcontr}.

One sees that  $Y \cap U_3$ is given by
$(\tilde{\varphi}: x^2+y^3+z^2+\text{other terms}=0) \subset \bC^4/\frac{1}{4}(1,2,3,1).$
Therefore,  $\Sing(Y) \cap U_3$ is type $cAx/4$ or $cA$.
 We also have $\Sing(Y)_{\ind=1} \cap U_1=\emptyset$, and $Q_2
\not \in Y$.

It remains to consider $Y \cap U_4$.
Notice that the corresponding term $\tilde{\theta}=y^iz^ju^{k'}$ has $i+j+k' \le 4$ unless $\theta=z^4u^3, \tilde{\theta}=z^4u$.
If $i+j+k' \le 4$, then $Y \cap U_4$ has  singularities at worst of type $cE_7$ and hence feasible resolution exists.  Suppose that  $\tilde{\theta}=z^4u
\in \tilde{\varphi}$. Hence $$wt_{v_{l}}(\tilde{\varphi})
  <l,$$  for $l=30, 24, 20, 18$. Therefore, $Y \cap U_4$ has singularities at worst of type $cE_8$ in Subcase 1-4.

\noindent
{\bf Subcase 6-2.} Suppose that $\natural$ hold. \\
We first consider a coordinate change that $\bar{y}:=y-s(z,u)$ with $s(z,u) \ne 0$ since there is $\theta$ in $yg_v, h_v$ or $h_{v+1}$.
 Now $P \in X$ is defined as
$$ \bar{\varphi}= x^2+\bar{y}^3+3s \bar{y}^2+\bar{y} g_{v+1}+ \bar{y} \bar{g}_> + \bar{h}_>$$
and we consider weighted blowup with weight $v_{20}=(10,7,4,1)$ instead in this situation.

Since $z^5 \in \varphi$ and hence  $z^5 \in \bar{\varphi}$,  one sees that $\Sing(Y) \cap U_4$ is isolated,
by Lemma \ref{isolated1}.
Since $s=(\alpha zu^2+ \beta u^6) \ne
0$, we have either $\bar{y}^2 zu^2$ or $\bar{y}^2u^6 \in
{\bar{\varphi}} $. One can check that  $Y \cap U_4$ has at worst singularities of type $cD$ and thus
 $Y \to X $ is a divisorial contraction.

Together with the fact that $\Sing(Y)_{\ind=1} \cap U_1$ is empty,
$Y \cap U_2$ is non-singular away from $Q_2$ and  $Q_3 \in
\Sing(Y)_{\ind>1}$, one sees that  a feasible resolution exists.

 \noindent{\bf Case 7.} ${v_{30}}(f)  < 30$, $wt_{v_{24}}(f) <
24$,  $wt_{v_{20}}(f) \ge 20$\\
We consider the weighted blowup with weight $v_{20}=(10,7,4,1)$.
Since $z^5 \in h_v$, we have $Q_3 \not \in Y$ and $\Sing(Y) \cap
U_4$ is isolated by Lemma \ref{isolated1}.

 We work on $U_4$. There is a term $\theta=y^iz^ju^k \in f$ with
$wt_{v_{24}}(\theta) <24$ and $wt_{v_{20}}( \theta) \ge 20$. Hence
$\tilde{\theta}=y^i z^j u^{k'}$ with $i+j+k' \le 3$. It follows
that $\Sing(Y) \cap U_4$ is at worst of type $cE_6$.

Therefore, $Y \to X$ is a divisorial contraction. Together with
the fact that $\Sing(Y)_{\ind=1} \cap U_1$ is empty, $Y \cap U_2$ is
non-singular away from $Q_2$ and  $Q_3 \in \Sing(Y)_{\ind>1}$, one
sees that  a feasible resolution exists.

 \noindent
{\bf Case 8.}$wt_{v_{30}}(f)  < 30$, $wt_{v_{24}}(f) \ge 24$.\\
We consider the weighted blowup with weight $v_{24}=(12,8,5,1)$.
One notices that $\tau(h)=5$ implies that $z^5 \in h_{v+1}$.
Since $wt_{v_{24}}(g_{v+1})=17$ and hence $u^2|g_{v+1}$. It follows that  $z^5 \not \in s(z,u) g_{v+1}$ and $\natural$ does not hold.
Therefore $Y
\to X$ is a divisorial contraction and $\Sing(Y)_{\ind=1} \subset
U_4$ by Theorem \ref{divcontr} and Proposition \ref{x0y0}.

Unless $\Sing(Y) \cap U_4=\{Q_4\}$ is of type $cE_8$, we have
feasible resolution of $Y$.
If $\Sing(Y) \cap U_4=\{Q_4\}$ is of type $cE_8$, then we have
$$\left\{ \begin{array}{l} \tau^*(\tilde{\varphi}) < \tau^*(\varphi); \\
wt_{v_{30}}(\tilde{\varphi})  \le wt_{v_{30}}(\varphi) ;\\
wt_{v_{24}}(\tilde{\varphi})  \le wt_{v_{24}}(\varphi).
\end{array}  \right.$$

 By induction on $\tau^*$,
the existence of feasible resolution is thus reduced to
the existence of feasible resolution of milder singularity or to
the existence of feasible resolution  of $cE_8$ singularities with
$wt_{v_{24}} < 24$.

\bigskip \noindent
{\bf Case 9.} $wt_{v_{30}}(f) \ge 30$.\\
We consider the weighted blowup with weight $v_{30}=(15,10,6,1)$.
The similar argument as in Case 8 works.

This completes the
proof.
\end{proof}

\begin{proof}[Proof of  Main Theorem] This follows from
Theorem \ref{cArsln}, \ref{cAr}, \ref{cD}, \ref{hind}, \ref{cE6},
\ref{cE/2}, \ref{cE7}, \ref{cE8}.
\end{proof}


\end{document}